\documentclass[a4paper]{amsart}

\title[Term Structure Models Driven by Poisson Measures]{Term Structure Models Driven by Wiener Process and
Poisson Measures: Existence and Positivity}
\author{Damir Filipovi\'c \and Stefan Tappe \and Josef Teichmann}
\date{April 29, 2009}
\address{Vienna Institute of Finance, University of Vienna, and Vienna University of Economics and Business Administration, Heiligenst\"adter Strasse 46-48, A-1190 Wien, Austria; Vienna University of Technology, Department of Mathematical Methods in Economics, Wiedner Hauptstrasse 8--10, A-1040 Wien, Austria}
\email{stefan.tappe@vif.ac.at, damir.filipovic@vif.ac.at,\newline
jteichma@fam.tuwien.ac.at}
\thanks{The first and second author gratefully acknowledges the support from WWTF (Vienna Science and Technology Fund). The third author gratefully acknowledges the support from the FWF-grant Y 328 (START prize from the Austrian Science Fund).}

\usepackage{amscd}
\usepackage{amsmath}
\usepackage{amssymb}
\usepackage{amsthm}
\usepackage{bbm}
\usepackage{stmaryrd}
\usepackage{eucal}

\newif\ifpdf
\ifx\pdfoutput\undefined
   \pdffalse        
\else
   \pdfoutput=1     
   \pdftrue
\fi

\ifpdf
   \usepackage[pdftex]{graphicx}
   \pdfadjustspacing=1
   \pdfcompresslevel=9
\else
   \usepackage{graphicx}
\fi

\frenchspacing

\numberwithin{equation}{section} \swapnumbers

\newtheorem{satz}{Satz}[section]

\newtheorem{theorem}[satz]{Theorem}
\newtheorem{proposition}[satz]{Proposition}
\newtheorem{corollary}[satz]{Corollary}
\newtheorem{lemma}[satz]{Lemma}
\newtheorem{assumption}[satz]{Assumption}

\newtheorem{definition}[satz]{Definition}

\newtheorem{remark}[satz]{Remark}

\begin{document}

\maketitle\thispagestyle{empty}

\begin{abstract}
In the spirit of \cite{BKR}, we investigate term structure models
driven by Wiener process and Poisson measures with forward curve
dependent volatilities. This includes a full existence and
uniqueness proof for the corresponding Heath--Jarrow--Morton type
term structure equation. Furthermore, we characterize positivity
preserving models by means of the characteristic coefficients, which was open for jump-diffusions.
Additionally we treat existence, uniqueness and positivity of the Brody-Hughston
equation \cite{brohug:01a,brohug:01b} of interest rate theory with
jumps, an equation which we believe to be very useful for applications. A key role in our investigation is played
by the method of the moving frame, which allows to transform the Heath--Jarrow--Morton--Musiela equation to a time-dependent SDE.

\bigskip

\textbf{Key Words:} term structure models driven by Wiener process
and Poisson measures, Heath-Jarrow-Morton-Musiela equation,
positivity preserving models, Brody-Hughston equation.
\end{abstract}

\keywords{91B28, 60H15}

\section{Introduction}

Interest rate theory is dealing with zero-coupon bonds, which are subject to a stochastic evolution due to daily trading of related products like coupon bearing bonds, swaps, caps, floors, swaptions, etc. Zero-coupon bonds, which is a financial asset paying the holder one unit of cash at maturity time $T$, are a conceptually important product, since one can easily write all other products as derivatives on them. We do always assume default-free bonds, i.e. there are no counterparty risks in the considered markets. The Heath-Jarrow-Morton methodology takes the bond market as a whole as today's aggregation of information on interest rates and one tries to model future flows of information by a stochastic evolution equation on the set of possible scenarios of bond prices. For the set of possible scenarios of bond prices the forward rate proved to be a flexible and useful parameterization, since it maps possible scenarios of the bond market to open subsets of (Hilbert) spaces of forward rate curves. Under some regularity assumptions the price of a zero coupon bond at $t\le T$ can be
written as
\[ P(t,T)=\exp\bigg(-\int_t^T f(t,u) du\bigg) \]
where $f(t,T)$ is the forward rate for date $T$. We do usually assume the forward rate to be continuous in maturity time $ T $. The classical continuous framework for the evolution of the forward rates goes
back to Heath, Jarrow and Morton (HJM) \cite{HJM}. They assume that,
for every date $T$, the forward rates $f(t,T)$ follow an It\^o
process of the form
\begin{align}\label{hjm-forward-rates-wiener}
df(t,T) = \alpha(t,T) dt + \sum_{j=1}^d \sigma^j(t,T) dW_t^j,\quad
t\in [0,T]
\end{align}
where $W=(W^1,\ldots,W^d)$ is a standard Brownian motion in
$\mathbb{R}^d$.

Empirical studies have revealed that models based on Brownian motion
only provide a poor fit to observed market data. We refer to
\cite[Chap. 5]{Raible}, where it is argued that empirically observed
log returns of zero coupon bonds are not normally distributed, a
fact, which has long before been known for the distributions of
stock returns. Bj\"ork et al.\ \cite{BKR,BKR0}, Eberlein et al.\
\cite{Eberlein-Raible,Eberlein_O, Eberlein_J, Eberlein_K1,
Eberlein_K2,Eberlein_Kluge_Review}  and others (\cite{Shirakawa,
Jarrow_Madan, Hyll}) thus proposed to replace the classical Brownian
motion $W$ in \eqref{hjm-forward-rates-wiener} by a more general
driving noise, also taking into account the occurrence of jumps.
Carmona and Tehranchi \cite{carteh} proposed models based on
infinite dimensional Wiener processes, see also \cite{fillnm}. In
the spirit of Bj\"ork et al.\ \cite{BKR} and Carmona and Tehranchi
\cite{carteh}, we focus on term structure models of the type
\begin{align}\label{hjm-forward-rates}
df(t,T) = \alpha(t,T)dt + \sum_j \sigma^j(t,T) d\beta_t^j + \int_E
\gamma(t,x,T) (\mu(dt,dx) - F(dx)dt),
\end{align}
where $\{ \beta_j \}$ denotes a (possibly infinite) sequence of
real-valued, independent Brownian motions and, in addition, $\mu$ is
a homogeneous Poisson random measure on $\mathbb{R}_+ \times E$ with
compensator $dt \otimes F(dx)$, where $E$ denotes the mark space.

For what follows, it will be convenient to switch to the alternative
parameterization
\begin{align*}
r_t(\xi) := f(t,t+\xi), \quad \xi \geq 0
\end{align*}
which is due to Musiela \cite{Musiela}. Then, we may regard
$(r_t)_{t \geq 0}$ as \textit{one} stochastic process with values in
$H$, that is
\begin{align*}
r : \Omega \times \mathbb{R}_+ \rightarrow H,
\end{align*}
where $H$ denotes a Hilbert space of forward curves $h :
\mathbb{R}_+ \rightarrow \mathbb{R}$ to be specified later. Recall
that we always assume that forward rate curves are continuous.
Denoting by $(S_t)_{t \geq 0}$ the shift semigroup on $H$, that is
$S_t h = h(t + \cdot)$, equation (\ref{hjm-forward-rates}) becomes
in integrated form
\begin{equation}\label{mild-solution-intro-1}
\begin{aligned}
r_t(\xi) &= S_t h_0(\xi) + \int_0^t S_{t-s} \alpha(s,s+\xi)ds +
\sum_j \int_0^t S_{t-s} \sigma^j(s,s+\xi)d\beta_s^j
\\ &\quad + \int_0^t \int_E S_{t-s} \gamma(s,x,s+\xi)
(\mu(ds,dx) - F(dx)ds), \quad t \geq 0
\end{aligned}
\end{equation}
where $h_0 \in H$ denotes the initial forward curve and $S_{t-s}$
operates on the functions $\xi \mapsto \alpha(s,s+\xi)$, $\xi
\mapsto \sigma^j(s,s+\xi)$ and $\xi \mapsto \gamma(s,x,s+\xi)$.

From a financial modeling point of view, one would rather consider
drift and volatilities to be functions of the prevailing forward
curve, that is
\begin{align*}
\alpha &: H \rightarrow H,
\\ \sigma^j &: H \rightarrow H, \quad \text{for all $j$}
\\ \gamma &: H \times E \rightarrow H.
\end{align*}
For example, the volatilities could be of the form $\sigma^j(h) =
\phi_j(\ell_1(h),\ldots,\ell_p(h))$ for some $p \in \mathbb{N}$ with
$\phi_j : \mathbb{R}^p \rightarrow H$ and $\ell_i : H \rightarrow
\mathbb{R}$. We may think of $\ell_i(h) = \frac{1}{\xi_i}
\int_0^{\xi_i} h(\eta) d\eta$ (benchmark yields) or $\ell_i(h) =
h(\xi_i)$ (benchmark forward rates).

The implied bond market
\begin{align}\label{bond-market}
P(t,T) = \exp \bigg( - \int_0^{T-t} r_t(\xi)d\xi \bigg)
\end{align}
is free of arbitrage if we can find an equivalent (local) martingale
measure such that discounted bond prices
$$
\exp \bigg( -\int_0^t r_s(0) ds \bigg) P(t,T)
$$
are local martingales for $ 0 \leq t \leq T $. If we formulate the HJM equation with respect to such an equivalent martingale measure, then the drift is determined by the volatility and jump structure, i.e. $\alpha = \alpha_{\rm HJM} : H
\rightarrow H$ is given by
\begin{align}\label{def-alpha-HJM-intro}
\alpha_{\rm HJM}(h) := \sum_j \sigma^j(h) \Sigma^j(h) - \int_E
\gamma(h,x) \left( e^{\Gamma(h,x)} - 1 \right) F(dx)
\end{align}
for all $h \in H$, where we have set
\begin{align}\label{def-Sigma}
\Sigma^j(h)(\xi) &:= \int_0^{\xi} \sigma^j(h)(\eta)d\eta, \quad
\text{for all $j$}
\\ \label{def-Gamma} \Gamma(h,x)(\xi) &:= -\int_0^{\xi} \gamma(h,x)(\eta) d\eta.
\end{align}
According to \cite{BKR} (if the Brownian motion is infinite
dimensional, see also \cite{fillnm}), condition
(\ref{def-alpha-HJM-intro}) guarantees that the discounted zero
coupon bond prices are local martingales for all maturities $T$, whence the market is
free of arbitrage. In the classical situation, where the model is
driven by a finite dimensional standard Brownian motion,
(\ref{def-alpha-HJM-intro}) is the well-known HJM drift condition
derived in \cite{HJM}.

Our requirements lead to the forward rates $(r_t)_{t \geq 0}$ in
(\ref{mild-solution-intro-1}) being a solution of the stochastic differential
equation
\begin{equation}\label{mild-solution-intro-2}
\begin{aligned}
r_t &= S_t h_0 + \int_0^t S_{t-s} \alpha_{\rm HJM}(r_s)ds + \sum_j
\int_0^t S_{t-s} \sigma^j(r_s)d\beta_s^j
\\ &\quad + \int_0^t \int_E S_{t-s} \gamma(r_{s-},x)
(\mu(ds,dx) - F(dx)ds), \quad t \geq 0
\end{aligned}
\end{equation}
and it arises the question whether this equation possesses a
solution. To our knowledge, there has not yet been an explicit proof
for the \textit{existence} of a solution to the Poisson measure
driven equation (\ref{mild-solution-intro-2}). We thus provide such
a proof in our paper, see Theorem \ref{thm-ex-hjm}. For term
structure models driven by a Brownian motion, the existence proof
has been provided in \cite{fillnm} and for the L\'evy case in
\cite{Filipovic-Tappe}. We also refer to the related papers
\cite{P-Z-paper} and \cite{Marinelli}.

In the spirit of \cite{Da_Prato} and \cite{P-Z-book}, an $H$-valued
stochastic process $(r_t)_{t \geq 0}$ satisfying
(\ref{mild-solution-intro-2}) is a so-called \textit{mild solution}
for the (semi-linear) stochastic partial differential equation
\begin{align}\label{HJMM-eqn}
\left\{
\begin{array}{rcl}
dr_t & = & (\frac{d}{d\xi} r_t + \alpha_{\rm HJM}(r_t))dt + \sum_j
\sigma^j(r_t) d\beta_t^j
\\ & &+\int_E \gamma(r_{t-},x) (\mu(dt,dx) -
F(dx)dt)
\medskip
\\ r_0 & = & h_0,
\end{array}
\right.
\end{align}
where $\frac{d}{d\xi}$ becomes the infinitesimal generator of the
strongly continuous semigroup of shifts $(S_t)_{t \geq 0}$.

In the sequel, we are therefore concerned with establishing the
existence of mild solutions for the HJMM
(Heath--Jarrow--Morton--Musiela) equation (\ref{HJMM-eqn}). As in
\cite{SPDE}, we understand stochastic partial differential equations
as time-dependent transformations of time-dependent stochastic differential
equations with infinite dimensional state space. More precisely, on an enlarged space $\mathcal{H}$ of
forward curves $h : \mathbb{R} \rightarrow \mathbb{R}$, which are
indexed by the whole real line, equipped with the strongly
continuous \textit{group} $(U_t)_{t \in \mathbb{R}}$ of shifts, we
solve the stochastic differential equation
\begin{align}\label{HJM-eqn}
\left\{
\begin{array}{rcl}
df_t & = & U_{-t} \ell \alpha_{\rm HJM}(\pi U_t f_t)dt + \sum_j
U_{-t} \ell \sigma^j(\pi U_t f_t) d\beta_t^j
\\ && + \int_E U_{-t}
\ell \gamma(\pi U_t f_{t-},x) (\mu(dt,dx) - F(dx)dt)
\medskip
\\ f_0 & = & \ell h_0,
\end{array}
\right.
\end{align}
where $\ell : H \rightarrow \mathcal{H}$ is an isometric embedding
and $\pi : \mathcal{H} \rightarrow H$ the adjoint operator of
$\ell$, and afterwards, we transform the solution process $(f_t)_{t
\geq 0}$ by $r_t := \pi U_t f_t$ in order to obtain a mild solution
for (\ref{HJMM-eqn}). Notice that (\ref{HJM-eqn}) just corresponds
to the original HJM dynamics in (\ref{hjm-forward-rates}), where, of
course, the forward rate $f_t(T)$ has no economic interpretation for
$T < t$. Thus, we will henceforth refer to (\ref{HJM-eqn}) as the
HJM (Heath--Jarrow--Morton) equation.

In practice, we are interested in term structure models producing
positive forward curves since negative forward rates are very rarely observed. 
After establishing the existence issue, we
shall therefore focus on positivity preserving term structure
models, and give a characterization of such models. The HJM equation
(\ref{HJM-eqn}) on the enlarged function space will be the key for
analyzing positivity of forward curves. Indeed, the method of the
moving frame, see \cite{SPDE}, allows us to use standard stochastic
analysis (see \cite{Jacod-Shiryaev}) for our investigations. It will
turn out that the conditions
\begin{align*}
&\sigma^j(h)(\xi) = 0, \, \text{for all $\xi \in (0,\infty), \, h
\in \partial P_{\xi}$ and all $j$}
\\ &h + \gamma(h,x) \in P, \, \text{for all $h \in P$ and $F$-almost all $x \in E$}
\\ &\gamma(h,x)(\xi) = 0, \, \text{for all $\xi \in (0,\infty), \, h \in
\partial P_{\xi}$ and $F$-almost all $x \in E$}
\end{align*}
where $P$ denotes the convex cone of all nonnegative forward curves,
and $\partial P_{\xi}$ the set of all nonnegative forward curves $h$
with $h(\xi) = 0$, are necessary and sufficient for the positivity
preserving property, see Theorem \ref{thm-ex-pos}. For this purpose,
we provide a general positivity preserving result, see Theorem
\ref{thm-pos}, which is of independent interest and can also be
applied on other function spaces. Positivity results for the diffusion case have been worked out in
\cite{Kotelenez} and \cite{Milian-inf}. We would like to mention in particular the important and beautiful
work \cite{Nakayama}, where through an application of a general support theorem positivity is proved. This general argument we shall also apply for our reasonings.

Another approach to interest rate markets was suggested by D.~Brody
and L.~Hughston (see \cite{brohug:01a} and \cite{brohug:01b})
inspired by methods from information geometry. It does need the
additional assumption that bond prices behave with respect to
maturity time $T$ like inverse distribution functions. However, this
is economically a completely innocent and even very natural
assumption, since it simply means that the (nominal) short rate
cannot turn negative. In this case possible scenarios of bond prices
are mapped to the set of probability densities on $ \mathbb{R}_+ $.
At first sight this approach seems more delicate, since the set of
probability densities is not a vector space any more but rather a
convex set (with a couple of different topologies). More precisely,
the underlying basic observation is the following: assume a positive
short rate, which almost surely does not converge to $ 0 $, then
bond prices satisfy that
\begin{itemize}
 \item prices as a function of maturity, $ T \mapsto P(t,T) $, are decreasing and
 continuous,
 \item the limit for $ T \to \infty $ is $ 0 $.
\end{itemize}
This suggests the following parameterization of bond price  -- under slight additional regularity assumptions --
$$
P(t,T) = \int_{T-t}^{\infty} \rho(t,u) du,
$$
where $ t \mapsto \rho(t,.) $ is a stochastic process of probability
densities on $ \mathbb{R}_+ $. Assuming additionally that $ t
\mapsto \rho_t(0) $ is a well-defined stochastic process, the short
rate, we can consider no arbitrage conditions. If one assumes the
existence of an equivalent martingale measure for the discounted
bond prices and its strict positivity, then -- with respect to this equivalent martingale
measure -- the process $ \rho_t $ satisfies the following equation
\begin{align}\label{bh-equation_intro}
\frac{d \rho_t(\xi)}{\rho_t(\xi)} = \bigg( \rho_t (0) +
\frac{d}{d\xi} (\log\rho_t(\xi)) \bigg) dt + \sum_{j} \left(
\sigma^j(t,\xi) - \overline{\sigma^j(t,\cdot)} \right) d\beta^j_t.
\end{align}
Here $ \overline{\sigma(t,\cdot)} $ denotes the average of $
\sigma(t,.) $ with respect to the probability measure $
\rho(\xi)d\xi $, see Section \ref{sec_bh-equation} for details. We
first extend this setting, which was basically outlined in
\cite{brohug:01a} and \cite{brohug:01b}, into the realm of jump
processes. Second, we prove existence and uniqueness of the
resulting equations on appropriate Hilbert spaces of probability
densities with or without jumps, a problem which has been left open
in the literature so far. It has to be pointed out that we do not pursue the approach suggested in
\cite{brohug:01a} to map densities via to the unit sphere of an appropriate $ L^2 $, but that we treat
equation \eqref{bh-equation_intro} directly by embedding probability densities into an appropriate Hilbert space of functions.

The remainder of this text is organized as follows. In Section
\ref{sec-space} we introduce the space $H_{\beta}$ of forward
curves. Using this space, we prove in Section \ref{sec-ex-tsm},
under appropriate regularity assumptions, the existence of a unique
solution for the HJMM equation (\ref{HJMM-eqn}). The positivity
issue of term structure models is treated in Section
\ref{sec-positivity}, there we show first the necessary conditions with a general semimartingale argument. The sufficient conditions are proved to hold true by switching on the jumps ``slowly''. This allows for a reduction to results from \cite{Nakayama}. An alternative approach to HJMM is proposed in
Section \ref{sec_bh-equation}, where the established (general)
positivity results are applied to the Brody-Hughston equation from
interest rate theory (with jumps). For convenience of the reader, we
provide the prerequisites on stochastic partial differential
equations in Appendix \ref{app-SDE}.

\section{The space of forward curves}\label{sec-space}

In this section, we introduce the space of forward curves, on which
we will solve the HJMM equation (\ref{HJMM-eqn}) in Section
\ref{sec-ex-tsm}.

We fix an arbitrary constant $\beta > 0$. Let $H_{\beta}$ be the
space of all absolutely continuous functions $h : \mathbb{R}_+
\rightarrow \mathbb{R}$ such that
\begin{align*}
\| h \|_{\beta} := \bigg( |h(0)|^2 + \int_{\mathbb{R}_+} |h'(\xi)|^2
e^{\beta \xi} d\xi \bigg)^{\frac{1}{2}} < \infty.
\end{align*}
Let $(S_t)_{t \geq 0}$ be the shift semigroup on $H_{\beta}$ defined
by $S_t h := h(t + \cdot)$ for $t \in \mathbb{R}_+$.

Since forward curves should flatten for large time to maturity $\xi$, the
choice of $H_{\beta}$ is reasonable from an economic point of view.

Moreover, let $\mathcal{H}_{\beta}$ be the space of all absolutely
continuous functions $h : \mathbb{R} \rightarrow \mathbb{R}$ such
that
\begin{align*}
\| h \|_{\beta} := \bigg( |h(0)|^2 + \int_{\mathbb{R}} |h'(\xi)|^2
e^{\beta|\xi|} d\xi \bigg)^{\frac{1}{2}} < \infty.
\end{align*}
Let $(U_t)_{t \in \mathbb{R}}$ be the shift group on
$\mathcal{H}_{\beta}$ defined by $U_t h := h(t + \cdot)$ for $t \in
\mathbb{R}$.

The linear operator $\ell : H_{\beta} \rightarrow
\mathcal{H}_{\beta}$ defined by
\begin{align*}
\ell(h)(\xi) :=
\begin{cases}
h(0), & \xi < 0
\\ h(\xi), & \xi \geq 0,
\end{cases}
\quad h \in H_{\beta}
\end{align*}
is an isometric embedding with adjoint operator $\pi := \ell^* :
\mathcal{H}_{\beta} \rightarrow H_{\beta}$ given by $\pi(h) =
h|_{\mathbb{R}_+}$, $h \in \mathcal{H}_{\beta}$.

\begin{theorem}\label{thm-group}
Let $\beta > 0$ be arbitrary.
\begin{enumerate}
\item The space $(H_{\beta},\| \cdot \|_{\beta})$ is a separable
Hilbert space.

\item For each $\xi \in \mathbb{R}_+$, the point
evaluation $h \mapsto h(\xi) : H_{\beta} \rightarrow \mathbb{R}$ is
a continuous linear functional.

\item $(S_t)_{t \geq 0}$ is a $C_0$-semigroup on $H_{\beta}$ with infinitesimal generator $\frac{d}{d\xi} :
\mathcal{D}(\frac{d}{d\xi}) \subset H_{\beta} \rightarrow
H_{\beta}$, $\frac{d}{d\xi}h = h'$, and domain
\begin{align*}
\mathcal{D} ({\textstyle \frac{d}{d\xi}}) = \{ h \in
H_{\beta} \, | \, h' \in H_{\beta} \}.
\end{align*}

\item Each $h \in H_{\beta}$ is continuous, bounded and
the limit $h(\infty) := \lim_{\xi \rightarrow \infty} h(\xi)$
exists.

\item $H_{\beta}^0 := \{ h \in H_{\beta} \, |
\, h(\infty) = 0 \}$ is a closed subspace of $H_{\beta}$.

\item There are universal constants
$C_1,C_2,C_3,C_4
> 0$, only depending on $\beta$, such that for all $h \in H_{\beta}$ we have the estimates
\begin{align}
\label{estimate-c1} \| h' \|_{L^1(\mathbb{R}_+)} &\leq C_1 \| h
\|_{\beta},
\\ \label{estimate-c2} \| h \|_{L^{\infty}(\mathbb{R}_+)} &\leq C_2 \| h \|_{\beta},
\\ \label{est-h} \| h - h(\infty) \|_{L^1(\mathbb{R}_+)} &\leq C_3 \| h \|_{\beta},
\\ \label{est-h-4} \| (h - h(\infty))^4 e^{\beta \bullet} \|_{L^1(\mathbb{R}_+)} &\leq C_4 \| h
\|_{\beta}^4.
\end{align}
\item For each $\beta' > \beta$, we have $H_{\beta'} \subset
H_{\beta}$, the relation
\begin{align}\label{est-embedding}
\| h \|_{\beta} \leq \| h \|_{\beta'}, \quad h \in H_{\beta'}
\end{align}
and there is a universal constant $C_5 > 0$, only depending on
$\beta$ and $\beta'$, such that for all $h \in H_{\beta'}$ we have
the estimate
\begin{align}\label{est-C5}
\| (h - h(\infty))^2 e^{\beta \bullet} \|_{L^1(\mathbb{R}_+)} \leq
C_5 \| h \|_{\beta'}^2.
\end{align}

\item The space $(\mathcal{H}_{\beta},\| \cdot \|_{\beta})$ is a separable
Hilbert space, $(U_t)_{t \in \mathbb{R}}$ is a $C_0$-group on
$\mathcal{H}_{\beta}$ and, for each $\xi \in \mathbb{R}$, the point
evaluation $h \mapsto h(\xi)$, $\mathcal{H}_{\beta} \rightarrow
\mathbb{R}$ is a continuous linear functional.

\item The diagram
\[ \begin{CD}
\mathcal{H}_{\beta} @>U_t>> \mathcal{H}_{\beta}\\
@AA\ell A @VV\pi V\\
H_{\beta} @>S_t>> H_{\beta}
\end{CD} \]
commutes for every $t \in \mathbb{R}_+$, that is
\begin{align}\label{diagram-commutes}
\pi U_t \ell h = S_t h \quad \text{for all $t \in \mathbb{R}_+$ and
$h \in H_{\beta}$.}
\end{align}

\end{enumerate}
\end{theorem}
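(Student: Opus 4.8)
The plan is to treat the nine assertions in the natural order, exploiting throughout the single structural observation that the map $J : h \mapsto (h(0),h')$ identifies $H_\beta$ isometrically with the product Hilbert space $\mathbb{R} \times L^2(\mathbb{R}_+, e^{\beta\xi}d\xi)$. Indeed, $\| \cdot \|_\beta$ is induced by the inner product $\langle g,h \rangle_\beta = g(0)h(0) + \int_{\mathbb{R}_+} g'(\xi)h'(\xi)e^{\beta\xi}d\xi$, and $J$ is surjective because any pair $(a,u)$ is the image of $h(\xi) := a + \int_0^\xi u(\eta)d\eta$, which is absolutely continuous with $h(0)=a$ and $h'=u$ almost everywhere. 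Since $\mathbb{R}\times L^2(\mathbb{R}_+,e^{\beta\xi}d\xi)$ is a separable Hilbert space, claim (1) follows at once.

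Claims (2), (4), (5) and (6) all rest on the fundamental theorem of calculus $h(\xi)=h(0)+\int_0^\xi h'(\eta)d\eta$ combined with Cauchy--Schwarz against the weight. The estimate $\int_0^\xi |h'|d\eta \le (\int_0^\xi |h'|^2 e^{\beta\eta}d\eta)^{1/2}(\int_0^\xi e^{-\beta\eta}d\eta)^{1/2} \le \beta^{-1/2}\|h\|_\beta$ gives claim (2) together with \eqref{estimate-c1} and \eqref{estimate-c2}, where $C_1=\beta^{-1/2}$ and $C_2=1+C_1$; it also shows $h'\in L^1(\mathbb{R}_+)$, so that $h(\infty)=h(0)+\int_0^\infty h'\,d\eta$ exists, which is claim (4). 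As $h\mapsto h(\infty)$ is then a continuous linear functional, its kernel $H_\beta^0$ is closed, giving claim (5). The remaining two estimates follow from the tail bound $|h(\xi)-h(\infty)| = |\int_\xi^\infty h'\,d\eta| \le \beta^{-1/2}e^{-\beta\xi/2}\|h\|_\beta$: raising it to the fourth power and integrating against $e^{\beta\xi}$ yields \eqref{est-h-4} with $C_4=\beta^{-3}$, while \eqref{est-h} follows from Fubini, $\int_0^\infty |h(\xi)-h(\infty)|d\xi \le \int_0^\infty \eta|h'(\eta)|d\eta$, and one further Cauchy--Schwarz. For claim (7) the pointwise inequality $e^{\beta\xi}\le e^{\beta'\xi}$ on $\mathbb{R}_+$ gives $H_{\beta'}\subset H_\beta$ and \eqref{est-embedding}, and squaring the tail bound with $\beta'$ in place of $\beta$ and integrating against $e^{\beta\xi}$ yields \eqref{est-C5} with $C_5=(\beta'(\beta'-\beta))^{-1}$, the convergence of the integral using precisely that $\beta'>\beta$.

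For claim (3), boundedness of $S_t$ follows from $\|S_t h\|_\beta^2 = |h(t)|^2 + e^{-\beta t}\int_t^\infty |h'|^2 e^{\beta\eta}d\eta$, and the semigroup law is immediate. Strong continuity and the identification of the generator I would reduce to the strong continuity of the shift on $L^2(\mathbb{R}_+,e^{\beta\xi}d\xi)$: the same change of variables shows $\|S_s\|\le 1$ there, so density of compactly supported continuous functions yields $\|S_s u - u\|_{L^2(e^{\beta\xi})}\to 0$. For $h$ with $h'\in H_\beta$, the derivative of the difference quotient satisfies $((S_t h - h)/t)' = t^{-1}\int_0^t S_s h''\,ds$, so subtracting $h''$ and invoking this translation continuity, together with $(h(t)-h(0))/t\to h'(0)$, shows $(S_t h - h)/t \to h'$ in $H_\beta$; hence such $h$ lie in the domain with $Ah=h'$, and the reverse inclusion follows from the semigroup identity $\int_0^t S_s(Ah)\,ds = S_t h - h$. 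Claim (8) is proved by the same arguments on $\mathcal{H}_\beta$, the only new point being that the symmetric weight $e^{\beta|\xi|}$ is exactly what keeps the shift bounded in both time directions, the inequality $|\eta-t|\le|\eta|+|t|$ making $\|U_t\|$ locally bounded, so that $(U_t)$ is a $C_0$-\emph{group}. Finally, claim (9) is a direct verification of \eqref{diagram-commutes}: for $t,\xi\ge 0$ one has $t+\xi\ge 0$, hence $(\ell h)(t+\xi)=h(t+\xi)$, giving $(\pi U_t \ell h)(\xi)=h(t+\xi)=(S_t h)(\xi)$.

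The main obstacle is the precise identification of the generator and its domain in claim (3). The delicate point is that differentiating the difference quotient $(S_t h - h)/t$ in the $H_\beta$-norm brings in the \emph{second} derivative $h''$ through the term $t^{-1}(h'(t+\cdot)-h'(\cdot))$, so one genuinely needs $h'\in H_\beta$ and the strong continuity of the shift on the weighted $L^2$ space to control the limit; the reverse inclusion, showing that membership in the abstract domain forces $h'\in H_\beta$, requires the semigroup integral identity and a careful extraction of the weak derivative. All remaining steps are routine applications of the fundamental theorem of calculus, Cauchy--Schwarz, and Fubini.
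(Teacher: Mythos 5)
Your proposal is correct, and it is in fact more self-contained than the paper's own proof, which disposes of items (1)--(6) by citing Filipovi\'c's monograph (where $H_{\beta}$ is the space $H_w$ with weight $w(\xi)=e^{\beta\xi}$), proves item (7) by the same H\"older computation you give (your constant $C_5=\frac{1}{\beta'(\beta'-\beta)}$ is exactly theirs), proves item (8) in detail, and gets item (9) from the definitions. Where your route genuinely diverges is in items (1) and (8): you coordinatize both spaces by $h\mapsto(h(0),h')$, whereas the paper first passes to the equivalent norm $|h|_{\beta}$ built on $h(-\infty)$ and uses the isometry $T h = (h(-\infty), h'e^{\frac{1}{2}\beta|\cdot|})$ onto $\mathbb{R}\times L^2(\mathbb{R})$; your choice of base point $0$ avoids that equivalent-norm detour entirely, at no cost. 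For strong continuity of $(U_t)$ the paper works on the dense set $\mathcal{D}_0=\{g : g'\in\mathcal{H}_{\beta}\}$ and estimates $\|U_tg-g\|_{\beta}$ through the second derivative $g''$, while you reduce directly to strong continuity of translations on the weighted $L^2$ space via density of compactly supported functions, together with $h(t)\to h(0)$; both arguments rest on the same growth bound ($\|U_t h\|_{\beta}^2 \leq (K_3+e^{\beta|t|})\|h\|_{\beta}^2$, which you reproduce), but yours pushes the second-derivative technicality into the generator identification of item (3) where it belongs, rather than into the group property. Notably, you also supply what the paper never writes down at all: the full proof of item (3), including both inclusions for $\mathcal{D}(\frac{d}{d\xi})$ --- your forward inclusion implicitly needs a Minkowski-integral-inequality step to bound $\|t^{-1}\int_0^t (S_s h''-h'')\,ds\|$ by the average of $\|S_s h''-h''\|$, and the reverse inclusion via $S_t h - h=\int_0^t S_s(Ah)\,ds$ evaluated pointwise (legitimate since point evaluations are continuous by item (2)) is the standard and correct extraction of $h'=Ah$. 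All your constants check out ($C_1=\beta^{-1/2}$, $C_2=1+C_1$, $C_4=\beta^{-3}$), so the only practical difference is that a referee comparing against the paper would see your (1)--(6) as an inlined reconstruction of the cited results rather than new content.
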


\begin{proof}
Note that $H_{\beta}$ is the space $H_w$ from \cite[Sec. 5.1]{fillnm}
with weight function $w(\xi) = e^{\beta \xi}$, $\xi \in
\mathbb{R}_+$. Hence, the first six statements follow from
\cite[Thm. 5.1.1, Cor. 5.1.1]{fillnm}.

For each $\beta' > \beta$, the observation
\begin{align*}
\int_{\mathbb{R}_+} |h'(\xi)|^2 e^{\beta \xi} d\xi \leq
\int_{\mathbb{R}_+} |h'(\xi)|^2 e^{\beta' \xi} d\xi, \quad h \in
H_{\beta'}
\end{align*}
shows $H_{\beta'} \subset H_{\beta}$ and (\ref{est-embedding}). For
an arbitrary $h \in H_{\beta'}$ we have, by H\"older's inequality,
\begin{align*}
&\int_{\mathbb{R}_+} |h(\xi) - h(\infty)|^2 e^{\beta \xi} d \xi =
\int_{\mathbb{R}_+} \bigg( \int_{\xi}^{\infty} h'(\eta)
e^{\frac{1}{2} \beta' \eta} e^{-\frac{1}{2} \beta' \eta} d \eta
\bigg)^2 e^{\beta \xi} d\xi
\\ &\leq \int_{\mathbb{R}_+} \bigg( \int_{\mathbb{R}_+} |h'(\eta)|^2 e^{\beta' \eta} d\eta
\bigg) \bigg( \int_{\xi}^{\infty} e^{- \beta' \eta} d\eta \bigg)
e^{\beta \xi} d \xi \leq \frac{1}{\beta'(\beta' - \beta)} \| h
\|_{\beta'}^2.
\end{align*}
Choosing $C_5 := \frac{1}{\beta'(\beta' - \beta)}$ proves
(\ref{est-C5}).

It is clear that $\| \cdot \|_{\beta}$ is a norm on
$\mathcal{H}_{\beta}$. First, we prove that there is a constant $K_1
> 0$ such that
\begin{align}\label{K1-1}
\| h' \|_{L^1(\mathbb{R})} \leq K_1 \| h \|_{\beta}, \quad h \in
\mathcal{H}_{\beta}.
\end{align}
Setting $K_1 := \sqrt{\frac{2}{\beta}}$, this is established by
H\"older's inequality
\begin{equation}\label{K1-derive}
\begin{aligned}
&\int_{\mathbb{R}} |h'(\xi)| d\xi = \int_{\mathbb{R}} |h'(\xi)|
e^{\frac{1}{2} \beta |\xi|} e^{-\frac{1}{2} \beta |\xi|} d\xi
\\ &\leq \bigg( \int_{\mathbb{R}} |h'(\xi)|^2 e^{\beta |\xi|} d \xi \bigg)^{\frac{1}{2}}
\bigg( \int_{\mathbb{R}} e^{-\beta |\xi|} d\xi \bigg)^{\frac{1}{2}}
= \sqrt{\frac{2}{\beta}} \bigg( \int_{\mathbb{R}} |h'(\xi)|^2
e^{\beta |\xi|} d \xi \bigg)^{\frac{1}{2}}.
\end{aligned}
\end{equation}
As a consequence of (\ref{K1-1}), for each $h \in
\mathcal{H}_{\beta}$ the limits $h(\infty) := \lim_{\xi \rightarrow
\infty} h(\xi)$ and $h(-\infty) := \lim_{\xi \rightarrow -\infty}
h(\xi)$ exist. This allows us to the define the new norm
\begin{align*}
| h |_{\beta} := \bigg( |h(-\infty)|^2 + \int_{\mathbb{R}}
|h'(\xi)|^2 e^{\beta|\xi|} d\xi \bigg)^{\frac{1}{2}}, \quad h \in
\mathcal{H}_{\beta}.
\end{align*}

From (\ref{K1-derive}) we also deduce that
\begin{align}\label{K1-2}
\| h' \|_{L^1(\mathbb{R})} \leq K_1 | h |_{\beta}, \quad h \in
\mathcal{H}_{\beta}.
\end{align}
Setting $K_2 := 1 + K_1$, from (\ref{K1-1}) and (\ref{K1-2}) is
follows that
\begin{align}\label{K2-1}
\| h \|_{L^{\infty}(\mathbb{R})} &\leq K_2 \| h \|_{\beta},
\\ \label{K2-2} \| h \|_{L^{\infty}(\mathbb{R})} &\leq K_2 | h |_{\beta}
\end{align}
for all $h \in \mathcal{H}_{\beta}$. Estimate (\ref{K2-1}) shows
that, for each $\xi \in \mathbb{R}$, the point evaluation $h \mapsto
h(\xi)$, $\mathcal{H}_{\beta} \rightarrow \mathbb{R}$ is a
continuous linear functional.

Using (\ref{K2-1}) and (\ref{K2-2}) we conclude that
\begin{align*}
\frac{1}{(1 + K_2^2)^{\frac{1}{2}}} \| h \|_{\beta} \leq |h|_{\beta}
\leq (1 + K_2^2)^{\frac{1}{2}} \| h \|_{\beta}, \quad h \in
\mathcal{H}_{\beta}
\end{align*}
which shows that $\| \cdot \|_{\beta}$ and $| \cdot |_{\beta}$ are
equivalent norms on $\mathcal{H}_{\beta}$.

Consider the separable Hilbert space $\mathbb{R} \times
L^2(\mathbb{R})$ equipped with the norm $(|\cdot|^2 + \| \cdot
\|_{L^2(\mathbb{R})}^2)^{\frac{1}{2}}$. Then the linear operator $T
: (\mathcal{H}_{\beta},|\cdot|_{\beta}) \rightarrow \mathbb{R}
\times L^2(\mathbb{R})$ given by
\begin{align*}
T h := (h(-\infty),h' e^{\frac{1}{2} \beta |\cdot|}), \quad h \in
\mathcal{H}_{\beta}
\end{align*}
is an isometric isomorphism with inverse
\begin{align*}
(T^{-1}(u,g))(x) = u + \int_{-\infty}^x g(\eta) e^{-\frac{1}{2}
\beta |\eta|} d\eta, \quad (u,g) \in \mathbb{R} \times
L^2(\mathbb{R}).
\end{align*}
Since $\| \cdot \|_{\beta}$ and $|\cdot|_{\beta}$ are equivalent,
$(\mathcal{H}_{\beta},\| \cdot \|_{\beta})$ is a separable Hilbert
space.

Next, we claim that
\begin{align*}
\mathcal{D}_0 := \{ g \in \mathcal{H}_{\beta} \,|\, g' \in
\mathcal{H}_{\beta} \}
\end{align*}
is dense in $\mathcal{H}_{\beta}$. Indeed,
$C_c^{\infty}(\mathbb{R})$ is dense in $L^2(\mathbb{R})$, see
\cite[Cor. IV.23]{Brezis}. Fix $h \in \mathcal{H}_{\beta}$ and let
$(g_n)_{n \in \mathbb{N}} \subset C_c^{\infty}(\mathbb{R})$ be an
approximating sequence of $h' e^{\frac{1}{2} \beta |\cdot|}$ in
$L^2(\mathbb{R})$. Then we have $h_n := T^{-1}(h(-\infty),g_n) \in
\mathcal{D}_0$ for all $n \in \mathbb{N}$ and $h_n \rightarrow h$ in
$\mathcal{H}_{\beta}$.

For each $t \in \mathbb{R}$ and $h \in \mathcal{H}_{\beta}$, the
function $U_t h$ is again absolutely continuous. We claim that there
exists a constant $K_3 > 0$ such that
\begin{align}\label{est-strong-cont}
\| U_t h \|_{\beta}^2 \leq (K_3 + e^{\beta |t|}) \| h \|_{\beta}^2,
\quad (t,h) \in \mathbb{R} \times \mathcal{H}_{\beta}.
\end{align}
Using (\ref{K2-1}), we obtain
\begin{align*}
\| U_t h \|_{\beta}^2 &= |h(t)|^2 + \int_0^{\infty} |h'(\xi+t)|^2
e^{\beta \xi} d\xi + \int_{-\infty}^0 |h'(\xi+t)|^2 e^{-\beta \xi}
d\xi
\\ &= |h(t)|^2 + e^{- \beta t} \int_t^{\infty} |h'(\xi)|^2 e^{\beta
\xi} d\xi + e^{\beta t} \int_{-\infty}^t |h'(\xi)|^2 e^{-\beta \xi}
d\xi
\\ &\leq (K_2^2 + 1 + e^{\beta |t|}) \| h \|_{\beta}^2, \quad h \in
\mathcal{H}_{\beta}.
\end{align*}
Setting $K_3 := 1 + K_2^2$, this establishes
(\ref{est-strong-cont}). Hence, we have $U_t h \in
\mathcal{H}_{\beta}$ for all $t \in \mathbb{R}$ and $h \in
\mathcal{H}_{\beta}$ and $U_t \in L(\mathcal{H}_{\beta})$, $t \in
\mathbb{R}$.

It remains to show strong continuity of the group $(U_t)_{t \in
\mathbb{R}}$. Using the observation
\begin{align*}
h(\xi+t) - h(\xi) = t \int_0^1 h'(\xi + st)ds, \quad (\xi,t,h) \in
\mathbb{R} \times \mathbb{R} \times \mathcal{H}_{\beta},
\end{align*}
which holds everywhere for an appropriately chosen absolutely continuous representative of $ h \in \mathcal{H}_{\beta} $, and (\ref{est-strong-cont}), we obtain for each $g \in \mathcal{D}_0$ the convergence
\begin{align*}
&\| U_t g - g \|_{\beta}^2 = |g(t) - g(0)|^2 + \int_{\mathbb{R}}
|g'(\xi+t) - g'(\xi)|^2 e^{\beta |\xi|} d\xi
\\ &\leq |g(t) - g(0)|^2 + t^2 \int_0^1 \int_{\mathbb{R}}
|g''(\xi+st)|^2 e^{\beta |\xi|} d\xi ds
\\ &\leq |g(t) - g(0)|^2 + t^2 \int_0^1 \| U_{st} g' \|_{\beta}^2 ds
\\ &\leq |g(t) - g(0)|^2 + t^2 \| g' \|_{\beta}^2 \int_0^1 (K_3 + e^{\beta s|t|}) ds
\\ &= |g(t) - g(0)|^2 + \bigg( K_3 t^2 + \frac{|t|}{\beta}
(e^{\beta |t|} - 1) \bigg) \| g' \|_{\beta}^2 \rightarrow 0 \quad
\text{as $t \rightarrow 0$.}
\end{align*}
Hence, $(U_t)_{t \in \mathbb{R}}$ is strongly continuous on
$\mathcal{D}_0$. But for any $h \in \mathcal{H}_{\beta}$ and
$\epsilon > 0$ there exists $g \in \mathcal{D}_0$ with $\| h-g
\|_{\beta} < \frac{\epsilon}{4 \sqrt{K_3 + e^{\beta}}}$. Combining
this with (\ref{est-strong-cont}) yields
\begin{align*}
\| U_t h - h \|_{\beta} &\leq \| U_t(h-g) \|_{\beta} + \| U_t g - g
\|_{\beta} + \| g-h \|_{\beta}
\\ &< \sqrt{K_3 + e^{\beta |t|}} \frac{\epsilon}{4 \sqrt{K_3 + e^{\beta}}} + \| U_t g - g \|_{\beta} +
\frac{\epsilon}{4 \sqrt{K_3 + e^{\beta}}} < \epsilon
\end{align*}
for $t \in \mathbb{R}$ small enough. We conclude that $(U_t)_{t \in
\mathbb{R}}$ is a $C_0$-group on $\mathcal{H}_{\beta}$.

Finally, relation (\ref{diagram-commutes}) follows from the
definitions of $\ell$ and $\pi$.
\end{proof}

\section{Existence of term structure models driven by Wiener process and Poisson
measures}\label{sec-ex-tsm}

In this section, we establish existence and uniqueness of the HJMM
equation (\ref{HJMM-eqn}) with diffusive and jump components on the Hilbert spaces introduced in the last
section.

Let $0 < \beta < \beta'$ be arbitrary real numbers. The framework is
the same as in Appendix \ref{app-SDE} with $H = H_{\beta}$ being the
space of forward curves introduced in Section \ref{sec-space},
equipped with the strongly continuous semigroup $(S_t)_{t \geq 0}$
of shifts, which has the infinitesimal generator $A = \frac{d}{d
\xi}$.

Let $\sigma : H_{\beta} \rightarrow L_2^0(H_{\beta}^0)$ and $\gamma
: H_{\beta} \times E \rightarrow H_{\beta'}^0$. For each $j$ we
define $\sigma^j : H_{\beta} \rightarrow H_{\beta}^0$ as
$\sigma^j(h) := \sqrt{\lambda_j} \sigma(h) e_j$.

\begin{assumption}\label{ass-Lipschitz}
We assume there exists a measurable function $\Phi : E \rightarrow
\mathbb{R}_+$ satisfying
\begin{align}\label{est-Gamma-f}
&|\Gamma(h,x)(\xi)| \leq \Phi(x), \quad h \in H_{\beta}, \, x \in E
\text{ and } \xi \in \mathbb{R}_+
\end{align}
a constant $L > 0$ such that
\begin{align}\label{Lipschitz-sigma-HJM}
\| \sigma(h_1) - \sigma(h_2) \|_{L_2^0(H_{\beta})} &\leq L \| h_1 -
h_2 \|_{\beta}
\\ \label{Lipschitz-gamma-HJM} \bigg( \int_E e^{\Phi(x)} \| \gamma(h_1,x) - \gamma(h_2,x) \|_{\beta'}^2 F(dx)
\bigg)^{\frac{1}{2}} &\leq L \| h_1 - h_2 \|_{\beta}
\end{align}
for all $h_1,h_2 \in H_{\beta}$, and a constant $M > 0$ such that
\begin{align}\label{bounded-sigma-HJM}
\| \sigma(h) \|_{L_2^0(H_{\beta})} &\leq M
\\ \label{bounded-gamma-HJM} \int_E e^{\Phi(x)} ( \| \gamma(h,x) \|_{\beta'}^2 \vee \| \gamma(h,x) \|_{\beta'}^4 )
F(dx) &\leq M
\end{align}
for all $h \in H_{\beta}$. Furthermore, we assume that for each $h
\in H_{\beta}$ the map
\begin{align}\label{def-alpha-2}
\alpha_2(h) &:= - \int_E \gamma(h,x) \left( e^{\Gamma(h,x)} - 1
\right) F(dx),
\end{align}
is absolutely continuous with weak derivative
\begin{equation}\label{der-alpha-2}
\begin{aligned}
\frac{d}{d \xi} \alpha_2(h) = \int_{E} \gamma(h,x)^2 e^{\Gamma(h,x)}
F(dx) - \int_{E} \frac{d}{d \xi} \gamma(h,x) \left( e^{\Gamma(h,x)}
- 1 \right) F(dx).
\end{aligned}
\end{equation}
\end{assumption}

\begin{proposition}
Suppose Assumption \ref{ass-Lipschitz} is fulfilled. Then we have
$\alpha_{\rm HJM}(H_{\beta}) \subset H_{\beta}^0$ and there is a
constant $K > 0$ such that
\begin{align}\label{Lipschitz-desired}
\| \alpha_{\rm HJM}(h_1) - \alpha_{\rm HJM}(h_2) \|_{\beta} \leq K
\| h_1 - h_2 \|_{\beta}
\end{align}
for all $h_1,h_2 \in H_{\beta}$.
\end{proposition}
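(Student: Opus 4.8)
The plan is to split $\alpha_{\rm HJM} = \alpha_1 + \alpha_2$, where $\alpha_1(h) := \sum_j \sigma^j(h) \Sigma^j(h)$ is the diffusion part and $\alpha_2$ is the jump part from (\ref{def-alpha-2}), and to treat the two parts separately. Since $H_{\beta}^0$ is a closed subspace by Theorem \ref{thm-group}, it suffices to show that each part maps $H_{\beta}$ into $H_{\beta}^0$ and is Lipschitz; the final constant $K$ is obtained by adding the two Lipschitz constants. For the diffusion part the central device is a bilinear estimate: for $f,g \in H_{\beta}^0$ and $G := \int_0^{\cdot} g(\eta) d\eta$, one has $\| fG \|_{\beta} \le C \| f \|_{\beta} \| g \|_{\beta}$. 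I would prove it by noting $G(0) = 0$, so $fG$ vanishes at $0$, and $(fG)' = f'G + fg$, whence
\begin{equation*}
\| fG \|_{\beta}^2 \le 2 \| G \|_{L^{\infty}(\mathbb{R}_+)}^2 \| f \|_{\beta}^2 + 2 \int_{\mathbb{R}_+} |f|^2 |g|^2 e^{\beta \xi} d\xi,
\end{equation*}
where $\| G \|_{L^{\infty}(\mathbb{R}_+)} \le \| g \|_{L^1(\mathbb{R}_+)} \le C_3 \| g \|_{\beta}$ by (\ref{est-h}) (recall $g(\infty) = 0$), and the last integral is bounded by $C_4 \| f \|_{\beta}^2 \| g \|_{\beta}^2$ via (\ref{est-h-4}) and the Cauchy--Schwarz inequality with weight $e^{\beta\xi}$.

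Applying this with $f = g = \sigma^j(h) \in H_{\beta}^0$ shows $\sigma^j(h) \Sigma^j(h) \in H_{\beta}^0$ (the product tends to $0$ at infinity because $\sigma^j(h)(\infty) = 0$ while $\Sigma^j(h)$ is bounded) with $\| \sigma^j(h) \Sigma^j(h) \|_{\beta} \le C \| \sigma^j(h) \|_{\beta}^2$; summing over $j$ and using $\sum_j \| \sigma^j(h) \|_{\beta}^2 = \| \sigma(h) \|_{L_2^0(H_{\beta})}^2 \le M^2$ from (\ref{bounded-sigma-HJM}) gives absolute convergence of the series in $H_{\beta}$, so $\alpha_1(h) \in H_{\beta}^0$. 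For the Lipschitz bound I would polarize, writing $\sigma^j(h_1) \Sigma^j(h_1) - \sigma^j(h_2) \Sigma^j(h_2)$ as $\sigma^j(h_1) \int_0^{\cdot} (\sigma^j(h_1) - \sigma^j(h_2)) + (\sigma^j(h_1) - \sigma^j(h_2)) \int_0^{\cdot} \sigma^j(h_2)$, apply the bilinear estimate to each summand, sum over $j$, and use the Cauchy--Schwarz inequality over $j$ together with (\ref{Lipschitz-sigma-HJM}) and (\ref{bounded-sigma-HJM}) to obtain $\| \alpha_1(h_1) - \alpha_1(h_2) \|_{\beta} \le 2CLM \| h_1 - h_2 \|_{\beta}$.

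For the jump part I would exploit that $\alpha_2(h)(0) = 0$ (since $\Gamma(h,x)(0) = 0$) together with the weak-derivative formula (\ref{der-alpha-2}), so that $\| \alpha_2(h) \|_{\beta} = \| \frac{d}{d\xi} \alpha_2(h) \|_{L^2(e^{\beta\xi})}$. I would bound this by Minkowski's integral inequality, moving the norm inside the $F(dx)$-integral, and estimating the two resulting integrands using $|\Gamma(h,x)| \le \Phi(x)$ from (\ref{est-Gamma-f}): the $\gamma^2 e^{\Gamma}$ term via (\ref{est-h-4}), and the $\gamma' (e^{\Gamma} - 1)$ term via the sharper inequality $|e^a - 1| \le |a| e^{|a|}$ combined with $|\Gamma(h,x)| \le C_3 \| \gamma(h,x) \|_{\beta'}$ (from (\ref{est-h}) and (\ref{est-embedding})). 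In both cases the $\gamma$-factors combine into a quadratic expression weighted by $e^{\Phi(x)}$, so (\ref{bounded-gamma-HJM}) gives finiteness; membership $\alpha_2(h) \in H_{\beta}^0$ follows because $\alpha_2(h)(\infty) = 0$ by dominated convergence, the integrand tending to $0$ pointwise in $\xi$ and being dominated by $C_2 C_3 e^{\Phi(x)} \| \gamma(h,x) \|_{\beta'}^2$ through (\ref{estimate-c2}).

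The Lipschitz estimate for $\alpha_2$ is the main obstacle. Abbreviating $\gamma_i := \gamma(h_i,x)$ and $\Gamma_i := \Gamma(h_i,x)$, I would write the difference of the integrands as telescoping sums, e.g. $\gamma_1^2 e^{\Gamma_1} - \gamma_2^2 e^{\Gamma_2} = (\gamma_1 - \gamma_2)(\gamma_1 + \gamma_2) e^{\Gamma_1} + \gamma_2^2 (e^{\Gamma_1} - e^{\Gamma_2})$ and analogously for the $\gamma'(e^{\Gamma} - 1)$ term, control the exponential differences by $|e^{\Gamma_1} - e^{\Gamma_2}| \le |\Gamma_1 - \Gamma_2| e^{\Phi(x)} \le C_3 \| \gamma_1 - \gamma_2 \|_{\beta'} e^{\Phi(x)}$, and then—again via Minkowski and the Cauchy--Schwarz inequality over $E$—reduce every surviving term to the two quantities $\big( \int_E e^{\Phi} \| \gamma_1 - \gamma_2 \|_{\beta'}^2 F(dx) \big)^{1/2} \le L \| h_1 - h_2 \|_{\beta}$ from (\ref{Lipschitz-gamma-HJM}) and $\int_E e^{\Phi} ( \| \gamma_i \|_{\beta'}^2 \vee \| \gamma_i \|_{\beta'}^4 ) F(dx) \le M$ from (\ref{bounded-gamma-HJM}). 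The difficulty is purely organizational: one must split the many terms so that each retained $\gamma$-factor is paired with the weight $e^{\Phi}$ in exactly the quadratic or quartic combination the assumptions provide, while the Lipschitz factor $\| \gamma_1 - \gamma_2 \|_{\beta'}$ is kept isolated. Collecting the constants from both parts then yields (\ref{Lipschitz-desired}).
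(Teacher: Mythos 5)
Your proposal is correct and takes essentially the same route as the paper: the same splitting $\alpha_{\rm HJM} = \alpha_1 + \alpha_2$, the same telescoping of the jump-term differences controlled by $|e^{\Gamma(h_1,x)} - e^{\Gamma(h_2,x)}| \leq e^{\Phi(x)} |\Gamma(h_1,x) - \Gamma(h_2,x)| \leq C_3 e^{\Phi(x)} \| \gamma(h_1,x) - \gamma(h_2,x) \|_{\beta'}$, and the same pairing of each retained $\gamma$-factor with the weight $e^{\Phi}$ so that (\ref{Lipschitz-gamma-HJM}) and (\ref{bounded-gamma-HJM}) close the estimates. The only cosmetic deviations are that you prove the bilinear estimate $\| f G \|_{\beta} \leq C \| f \|_{\beta} \| g \|_{\beta}$ directly, whereas the paper cites \cite[Cor. 5.1.2]{fillnm} for precisely this fact, and that you move the norm inside the $F(dx)$-integral by Minkowski's integral inequality where the paper uses H\"older's inequality with the splitting $e^{\frac{1}{2}\Phi(x)} e^{\frac{1}{2}\Phi(x)}$ together with Fubini.
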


\begin{proof}
Note that $\alpha_{\rm HJM} = \alpha_1 + \alpha_2$, where
\begin{align*}
\alpha_1(h) := \sum_{j} \sigma^j(h) \Sigma^j(h), \quad h \in
H_{\beta}
\end{align*}
and $\alpha_2$ is given by (\ref{def-alpha-2}). By \cite[Cor.
5.1.2]{fillnm} we have $\sigma^j(h) \Sigma^j(h) \in H_{\beta}^0$, $h
\in H_{\beta}$ for all $j$. Let $h \in H_{\beta}$ be arbitrary. For
$n,m \in \mathbb{N}$ with $n < m$ we have, using \cite[Cor.
5.1.2]{fillnm} again,
\begin{align*}
\bigg\| \sum_{j=n+1}^m \sigma^j(h) \Sigma^j(h) \bigg\|_{\beta} \leq
\sum_{j=n+1}^m \| \sigma^j(h) \Sigma^j(h) \|_{\beta} \leq
\sqrt{3(C_3^2 + 2 C_4)} \sum_{j=n+1}^m \| \sigma^j(h) \|_{\beta}^2.
\end{align*}
Hence, $\sum_j \sigma^j(h) \Sigma^j(h)$ is a Cauchy sequence in
$H_{\beta}^0$, because $\sigma(h) \in L_2^0(H_{\beta}^0)$. We deduce
that $\alpha_1(H_\beta) \subset H_{\beta}^0$.

For all $h \in H_{\beta}$, $x \in E$ and $\xi \in \mathbb{R}_+$ we
have by (\ref{estimate-c2}) and (\ref{est-embedding})
\begin{align}\label{est-gamma}
|\gamma(h,x)(\xi)| \leq C_2 \| \gamma(h,x) \|_{\beta} \leq C_2 \|
\gamma(h,x) \|_{\beta'}.
\end{align}
For all $x \in E$ and $\xi \in \mathbb{R}_+$ we have by
(\ref{est-Gamma-f}), (\ref{est-h}) and (\ref{est-embedding})
\begin{align}\label{est-Gamma}
|e^{\Gamma(h,x)(\xi)} - 1| \leq e^{\Phi(x)} |\Gamma(h,x)(\xi)| \leq
e^{\Phi(x)} \| \gamma(h,x) \|_{L^1(\mathbb{R}_+)} \leq C_3
e^{\Phi(x)} \| \gamma(h,x) \|_{\beta'}.
\end{align}
Estimates (\ref{est-gamma}), (\ref{est-Gamma}) and
(\ref{bounded-gamma-HJM}) show that $\lim_{\xi \rightarrow \infty}
\alpha_2(h)(\xi) = 0$. From (\ref{est-Gamma-f}), (\ref{est-gamma}),
(\ref{bounded-gamma-HJM}) and (\ref{est-C5}) it follows that
\begin{align*}
&\int_{\mathbb{R}_+} \left( \int_E \gamma(h,x)(\xi)^2
e^{\Gamma(h,x)(\xi)} F(dx) \right)^2 e^{\beta \xi} d\xi
\\ &\leq C_2^2
M \int_{\mathbb{R}_+} \left( \int_E \gamma(h,x)(\xi)^2
e^{\Gamma(h,x)(\xi)} F(dx) \right) e^{\beta \xi} d\xi
\\ &\leq C_2^2 M \int_E e^{\Phi(x)} \int_{\mathbb{R}_+}
\gamma(h,x)(\xi)^2 e^{\beta \xi} d\xi F(dx)
\\ &\leq C_2^2 M C_5 \int_E e^{\Phi(x)} \| \gamma(h,x) \|_{\beta'}^2
F(dx) \leq C_2^2 M^2 C_5.
\end{align*}
We obtain by (\ref{est-Gamma}), H\"older's inequality,
(\ref{bounded-gamma-HJM}) and (\ref{est-embedding})
\begin{align*}
&\int_{\mathbb{R}_+} \left( \int_E \frac{d}{d \xi} \gamma(h,x)(\xi)
\left( e^{\Gamma(h,x)(\xi)} - 1 \right) F(dx) \right)^2 e^{\beta
\xi} d\xi
\\ &\leq C_3^2 \int_{\mathbb{R}_+} \bigg( \int_E \bigg| \frac{d}{d\xi} \gamma(h,x)(\xi) \bigg|
e^{\frac{1}{2}\Phi(x)} e^{\frac{1}{2}\Phi(x)} \| \gamma(h,x)
\|_{\beta'} F(dx) \bigg)^2 e^{\beta \xi} d\xi
\\ &\leq C_3^2 M \int_E e^{\Phi(x)} \int_{\mathbb{R}_+} \bigg| \frac{d}{d\xi}
\gamma(h,x)(\xi) \bigg|^2 e^{\beta \xi} d\xi F(dx)
\\ &\leq C_3^2 M \int_E e^{\Phi(x)} \| \gamma(h,x) \|_{\beta'}^2 F(dx)
\leq C_3^2 M^2.
\end{align*}
We conclude that $\alpha_2(H_{\beta}) \subset H_{\beta}^0$, and
hence $\alpha_{\rm HJM}(H_{\beta}) \subset H_{\beta}^0$.

Let $h_1,h_2 \in H_{\beta}$ be arbitrary. By \cite[Cor.
5.1.2]{fillnm}, H\"older's inequality, (\ref{Lipschitz-sigma-HJM})
and (\ref{bounded-sigma-HJM}) we have
\begin{align*}
&\| \alpha_1(h_1) - \alpha_1(h_2) \|_{\beta} \\
& \leq \sqrt{3(C_3^2 + 2
C_4)} \sum_{j} (\| \sigma^j(h_1) \|_{\beta} + \| \sigma^j(h_2)
\|_{\beta}) \| \sigma^j(h_1) - \sigma^j(h_2) \|_{\beta}
\\ &\leq \sqrt{3(C_3^2 + 2 C_4)} \sqrt{\sum_j (\| \sigma^j(h_1) \|_{\beta} + \| \sigma^j(h_2) \|_{\beta})^2}
\sqrt{\sum_j \| \sigma^j(h_1) - \sigma^j(h_2) \|_{\beta}^2}
\\ &\leq \sqrt{6(C_3^2 + 2 C_4)} (\| \sigma(h_1) \|_{L_2^0(H_{\beta})} + \| \sigma(h_2)
\|_{L_2^0(H_{\beta})}) \| \sigma(h_1) - \sigma(h_2)
\|_{L_2^0(H_{\beta})}
\\ &\leq 2ML \sqrt{6(C_3^2 + 2 C_4)} \| h_1 - h_2 \|_{\beta}.
\end{align*}
Furthermore, by (\ref{der-alpha-2}),
\begin{align*}
\| \alpha_2(h_1) - \alpha_2(h_2) \|_{\beta}^2 \leq 4(I_1 + I_2 + I_3
+ I_4),
\end{align*}
where we have put
\begin{align*}
I_1 &:= \int_{\mathbb{R}_+} \left( \int_E \gamma(h_1,x)(\xi)^2
\left( e^{\Gamma(h_1,x)(\xi)} - e^{\Gamma(h_2,x)(\xi)} \right) F(dx)
\right)^2 e^{\beta \xi} d\xi,
\\ I_2 &:= \int_{\mathbb{R}_+} \left( \int_E e^{\Gamma(h_2,x)(\xi)}
(\gamma(h_1,x)(\xi)^2 - \gamma(h_2,x)(\xi)^2) F(dx) \right)^2
e^{\beta \xi} d\xi,
\\ I_3 &:= \int_{\mathbb{R}_+} \left( \int_E
\frac{d}{d \xi} \gamma(h_1,x)(\xi) \left( e^{\Gamma(h_1,x)(\xi)} -
e^{\Gamma(h_2,x)(\xi)} \right) F(dx) \right)^2 e^{\beta \xi} d \xi,
\\ I_4 &:= \int_{\mathbb{R}_+} \left( \int_E \left(
e^{\Gamma(h_2,x)(\xi)} - 1 \right) \left( \frac{d}{d \xi}
\gamma(h_1,x)(\xi) - \frac{d}{d \xi} \gamma(h_2,x)(\xi) \right)
F(dx) \right)^2 e^{\beta \xi} d\xi.
\end{align*}
We get for all $x \in E$ and $\xi \in \mathbb{R}_+$ by
(\ref{est-Gamma-f}), (\ref{est-h}) and (\ref{est-embedding})
\begin{equation}\label{Gamma-exponential}
\begin{aligned}
&|e^{\Gamma(h_1,x)(\xi)} - e^{\Gamma(h_2,x)(\xi)}| \leq e^{\Phi(x)}
|\Gamma(h_1,x)(\xi) - \Gamma(h_2,x)(\xi)|
\\ &\leq e^{\Phi(x)} \| \gamma(h_1,x) - \gamma(h_2,x)
\|_{L^1(\mathbb{R}_+)} \leq C_3 e^{\Phi(x)} \| \gamma(h_1,x) -
\gamma(h_2,x) \|_{\beta'}.
\end{aligned}
\end{equation}
Relations (\ref{Gamma-exponential}), H\"older's inequality,
(\ref{Lipschitz-gamma-HJM}), (\ref{est-h-4}), (\ref{est-embedding})
and (\ref{bounded-gamma-HJM}) give us
\begin{align*}
I_1 &\leq C_3^2 \int_{\mathbb{R}_+} \bigg( \int_E \gamma(h,x)(\xi)^2
e^{\frac{1}{2} \Phi(x)} e^{\frac{1}{2} \Phi(x)} \| \gamma(h_1,x) -
\gamma(h_2,x) \|_{\beta'} F(dx) \bigg)^2 e^{\beta \xi} d\xi
\\ &\leq C_3^2 L^2 \| h_1 - h_2 \|_{\beta}^2 \int_E e^{\Phi(x)} \int_{\mathbb{R}_+}
\gamma(h_1,x)(\xi)^4 e^{\beta \xi} d\xi F(dx)
\\ &\leq C_3^2 L^2 C_4 \| h_1 - h_2 \|_{\beta}^2 \int_E e^{\Phi(x)} \| \gamma(h_1,x) \|_{\beta'}^4 F(dx)
\leq C_3^2 L^2 C_4 M \| h_1 - h_2 \|_{\beta}^2.
\end{align*}
For every $\xi \in \mathbb{R}_+$ we obtain by (\ref{est-gamma}) and
(\ref{bounded-gamma-HJM})
\begin{equation}\label{I3-pre}
\begin{aligned}
&\int_E e^{\Phi(x)} (\gamma(h_1,x)(\xi) + \gamma(h_2,x)(\xi))^2
F(dx)
\\ &\leq 2 \int_E e^{\Phi(x)} (\gamma(h_1,x)(\xi)^2 + \gamma(h_2,x)(\xi)^2) F(dx)
\\ &\leq 2 C_2^2 \bigg( \int_E e^{\Phi(x)} \| \gamma(h_1,x)
\|_{\beta'}^2 F(dx) + \int_E e^{\Phi(x)} \| \gamma(h_2,x)
\|_{\beta'}^2 F(dx) \bigg) \leq 4 C_2^2 M.
\end{aligned}
\end{equation}
Using (\ref{est-Gamma-f}), H\"older's inequality, (\ref{I3-pre}),
(\ref{est-C5}) and (\ref{Lipschitz-gamma-HJM}) we get
\begin{align*}
I_2 &\leq \int_{\mathbb{R}_+} \bigg( \int_E (\gamma(h_1,x)(\xi) +
\gamma(h_2,x)(\xi)) e^{\frac{1}{2}\Phi(x)} e^{\frac{1}{2}\Phi(x)}
(\gamma(h_1,x)(\xi) - \gamma(h_2,x)(\xi)) \bigg)^2 \\
& \qquad \qquad \times e^{\beta \xi}
d\xi
\\ &\leq 4 C_2^2 M \int_E e^{\Phi(x)} \int_{\mathbb{R}_+} (\gamma(h_1,x)(\xi) - \gamma(h_2,x)(\xi))^2 e^{\beta \xi} d\xi F(dx)
\\ &\leq 4 C_2^2 M C_5 \int_E e^{\Phi(x)} \| \gamma(h_1,x)(\xi) - \gamma(h_2,x)(\xi) \|_{\beta'}^2
F(dx) \\
& \leq 4 C_2^2 M C_5 L^2 \| h_1 - h_2 \|_{\beta}^2.
\end{align*}
Using (\ref{Gamma-exponential}), H\"older's inequality,
(\ref{Lipschitz-gamma-HJM}), (\ref{est-embedding}) and
(\ref{bounded-gamma-HJM}) gives us
\begin{align*}
I_3 &\leq C_3^2 \int_{\mathbb{R}_+} \bigg( \int_E \bigg| \frac{d}{d
\xi} \gamma(h_1,x)(\xi) \bigg| e^{\frac{1}{2} \Phi(x)}
e^{\frac{1}{2} \Phi(x)} \| \gamma(h_1,x) - \gamma(h_2,x) \|_{\beta'}
F(dx) \bigg)^2
\\ & \qquad \qquad \times e^{\beta \xi} d\xi
\\ &\leq C_3^2 L^2 \| h_1 - h_2 \|_{\beta}^2 \int_E e^{\Phi(x)} \int_{\mathbb{R}_+} \bigg| \frac{d}{d \xi}
\gamma(h_1,x)(\xi) \bigg|^2 e^{\beta \xi} d\xi F(dx)
\\ &\leq C_3^2 L^2 \| h_1 - h_2 \|_{\beta}^2 \int_E e^{\Phi(x)} \| \gamma(h_1,x) \|_{\beta'}^2
F(dx) \leq C_3^2 L^2 M \| h_1 - h_2 \|_{\beta}^2.
\end{align*}
We obtain by (\ref{est-Gamma}), H\"older's inequality,
(\ref{bounded-gamma-HJM}), (\ref{est-embedding}) and
(\ref{Lipschitz-gamma-HJM})
\begin{align*}
I_4 &\leq C_3^2 \int_{\mathbb{R}_+} \bigg( \int_E \| \gamma(h_2,x)
\|_{\beta'} e^{\frac{1}{2}\Phi(x)}  e^{\frac{1}{2}\Phi(x)} \bigg|
\frac{d}{d\xi} \gamma(h_1,x)(\xi) - \frac{d}{d\xi}
\gamma(h_2,x)(\xi) \bigg| F(dx) \bigg)^2
\\ & \qquad \qquad \times e^{\beta \xi} d\xi
\\ &\leq C_3^2 M \int_E e^{\Phi(x)} \int_{\mathbb{R}_+} \bigg|
\frac{d}{d\xi} \gamma(h_1,x)(\xi) - \frac{d}{d\xi}
\gamma(h_2,x)(\xi) \bigg|^2 e^{\beta \xi} d\xi F(dx)
\\ &\leq C_3^2 M \int_E e^{\Phi(x)} \| \gamma(h_1,x) - \gamma(h_2,x)
\|_{\beta'}^2 F(dx) \leq C_3^2 M L^2 \| h_1 - h_2 \|_{\beta}^2.
\end{align*}
Summing up, we deduce that there is a constant $K > 0$ such that
(\ref{Lipschitz-desired}) is satisfied for all $h_1,h_2 \in
H_{\beta}$.
\end{proof}

\begin{theorem}\label{thm-ex-hjm}
Suppose Assumption \ref{ass-Lipschitz} is fulfilled. Then, for each
initial curve $h_0 \in L^2(\Omega,\mathcal{F}_0,\mathbb{P};H_{\beta})$
there exists a unique adapted, c\`adl\`ag, mean square continuous
$\mathcal{H}_{\beta}$-valued solution $(f_t)_{t \geq 0}$ for the HJM
equation (\ref{HJM-eqn}) with $f_0 = \ell h_0$ satisfying
\begin{align}\label{solution-in-S2-f}
\mathbb{E} \bigg[ \sup_{t \in [0,T]} \| f_t \|_{\beta}^2 \bigg] <
\infty \quad \text{for all $T \in \mathbb{R}_+$,}
\end{align}
and there exists a unique adapted, c\`adl\`ag, mean square
continuous mild and weak $H_{\beta}$-valued solution $(r_t)_{t \geq
0}$ for the HJMM equation (\ref{HJMM-eqn}) with $r_0 = h_0$
satisfying
\begin{align}\label{solution-in-S2}
\mathbb{E} \bigg[ \sup_{t \in [0,T]} \| r_t \|_{\beta}^2 \bigg] <
\infty \quad \text{for all $T \in \mathbb{R}_+$,}
\end{align}
which is given by $r_t := \pi U_t f_t$, $t \geq 0$. Moreover, the
implied bond market (\ref{bond-market}) is free of arbitrage.
\end{theorem}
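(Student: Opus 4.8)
The plan is to solve the HJM equation (\ref{HJM-eqn}) first, treating it as a genuine (time-inhomogeneous) stochastic differential equation on $\mathcal{H}_{\beta}$ built only from \emph{bounded} operators, and then to recover the HJMM solution through the moving-frame transform $r_t := \pi U_t f_t$. Write the coefficients of (\ref{HJM-eqn}) as $\tilde{\alpha}(t,f) := U_{-t}\ell\,\alpha_{\rm HJM}(\pi U_t f)$, $\tilde{\sigma}(t,f) := U_{-t}\ell\,\sigma(\pi U_t f)$ and $\tilde{\gamma}(t,f,x) := U_{-t}\ell\,\gamma(\pi U_t f,x)$. Since $\ell$ is an isometry and $\pi = \ell^*$ a contraction, while the group bound (\ref{est-strong-cont}) gives $\|U_{\pm t}\| \leq \sqrt{K_3 + e^{\beta|t|}}$, all operators appearing here are bounded uniformly on each compact interval $[0,T]$. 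Combining this with the global Lipschitz estimate (\ref{Lipschitz-desired}) for $\alpha_{\rm HJM}$ from the preceding Proposition and with the Lipschitz and boundedness bounds (\ref{Lipschitz-sigma-HJM})--(\ref{bounded-gamma-HJM}) for $\sigma$ and $\gamma$ (note $\Phi \geq 0$, so $e^{\Phi(x)} \geq 1$ dominates the required unweighted integrals, and $\|\cdot\|_{\beta} \leq \|\cdot\|_{\beta'}$ by (\ref{est-embedding}) controls the $\beta$-norm of $\gamma$), I obtain that $\tilde{\alpha},\tilde{\sigma},\tilde{\gamma}$ are Lipschitz continuous in $f$ and of linear growth, with constants depending only on $T$.

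Next I would invoke the general existence and uniqueness theorem for such time-dependent Lipschitz equations driven by a Wiener process and a Poisson random measure, provided in Appendix \ref{app-SDE}. Its Banach fixed-point (Picard iteration) argument yields a unique adapted, c\`adl\`ag, mean-square continuous $\mathcal{H}_{\beta}$-valued solution $(f_t)_{t\geq 0}$ with $f_0 = \ell h_0$ satisfying the $S^2$-bound (\ref{solution-in-S2-f}).

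To pass to the HJMM equation I would apply the bounded operator $\pi U_t$ to the integrated form of $(f_t)$ and commute it through each integral: for fixed $t$, $\pi U_t$ is a fixed bounded operator, hence may be pulled inside the Bochner, stochastic, and compensated Poisson integrals, after which the identity $\pi U_t U_{-s}\ell = \pi U_{t-s}\ell = S_{t-s}$ (valid for $0 \leq s \leq t$ by (\ref{diagram-commutes})) turns every integrand into its HJMM counterpart. This produces exactly the mild-solution representation (\ref{mild-solution-intro-2}) for $r_t = \pi U_t f_t$; the rigorous justification of this interchange, the mean-square continuity and c\`adl\`ag regularity of $(r_t)$, and the equivalence of mild and weak solutions are the content of the moving-frame method, for which I would cite \cite{SPDE} together with Appendix \ref{app-SDE}. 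The $S^2$-bound (\ref{solution-in-S2}) follows from (\ref{solution-in-S2-f}) because $\|r_t\|_{\beta} = \|\pi U_t f_t\|_{\beta} \leq \sqrt{K_3 + e^{\beta t}}\,\|f_t\|_{\beta}$ on $[0,T]$, and uniqueness of $(r_t)$ transfers back from that of $(f_t)$ since the transform is invertible. Finally, the no-arbitrage assertion is immediate: the drift was defined to be the HJM drift $\alpha_{\rm HJM}$ of (\ref{def-alpha-HJM-intro}), which is precisely the condition under which, by \cite{BKR} (and \cite{fillnm} in the infinite-dimensional Wiener case), the discounted bond prices associated with (\ref{bond-market}) are local martingales for every maturity $T$.

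The main obstacle I anticipate is the moving-frame interchange in the penultimate step: pulling the $t$-dependent operator $\pi U_t$ through stochastic and Poisson integrals whose upper limit is $t$ itself, and verifying that the resulting process is genuinely a mild (and hence weak) solution with the claimed path regularity rather than a merely formal rearrangement. The weighted integrability built into Assumption \ref{ass-Lipschitz} (the factor $e^{\Phi(x)}$ and the use of the stronger norm $\|\cdot\|_{\beta'}$ for $\gamma$) is exactly what guarantees that the transformed jump coefficient $\tilde{\gamma}$ retains square-integrability against $F$ after application of the left-shift $U_{-t}$, whose operator norm grows with $t$ yet stays bounded on each $[0,T]$, so that the hypotheses of the appendix are met uniformly on compacts.
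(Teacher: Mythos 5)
Your treatment of the existence and uniqueness part coincides with the paper's own proof: the paper likewise checks that Theorem \ref{thm-group} together with (\ref{Lipschitz-sigma-HJM}), (\ref{Lipschitz-gamma-HJM}), (\ref{bounded-gamma-HJM}), (\ref{est-embedding}) and the Lipschitz estimate (\ref{Lipschitz-desired}) for $\alpha_{\rm HJM}$ implies Assumptions \ref{ass-group}, \ref{ass-1-mild-hom}, \ref{ass-2-mild-hom}, and then invokes Theorem \ref{thm-ex-SDE}. Note that the ``main obstacle'' you anticipate --- commuting $\pi U_t$ through the stochastic and compensated Poisson integrals and upgrading $r_t = \pi U_t f_t$ to a genuine mild and weak solution with c\`adl\`ag, mean square continuous paths --- is not something you need to argue separately: Theorem \ref{thm-ex-SDE} (that is, \cite[Thm. 8.6, Cor. 10.6]{SPDE}) already delivers \emph{both} processes $(f_t)$ and $(r_t)$ with all stated properties simultaneously, so your Picard-iteration sketch and the interchange discussion are correct in spirit but redundant relative to the cited result.

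The one genuine gap is the no-arbitrage assertion, which you declare ``immediate.'' It is not: the criterion of \cite[Prop. 5.3]{BKR} is formulated for the \emph{integrated} drift, so one must first show that the pointwise condition (\ref{def-alpha-HJM-intro}) integrates to
\begin{align*}
\int_0^{\bullet} \alpha_{\rm HJM}(h)(\eta)\,d\eta = \frac{1}{2} \sum_j
\Sigma^j(h)^2 + \int_E \left( e^{\Gamma(h,x)} - 1 - \Gamma(h,x)
\right) F(dx),
\end{align*}
which requires interchanging $\int_0^{\xi}$ with $\sum_j$ and with $\int_E$. The paper justifies the Wiener part by \cite[Lemma 4.3.2]{fillnm} and the jump part by the second-order estimate
\begin{align*}
|e^{\Gamma(h,x)(\xi)} - 1 - \Gamma(h,x)(\xi)| \leq \frac{1}{2}
e^{\Phi(x)} \Gamma(h,x)(\xi)^2 \leq \frac{C_3^2}{2} e^{\Phi(x)} \|
\gamma(h,x) \|_{\beta'}^2,
\end{align*}
which together with (\ref{bounded-gamma-HJM}) makes the $F$-integral absolutely convergent; only after this does the combination of \cite[Prop. 5.3]{BKR} with \cite[Lemma 4.3.3]{fillnm} (the latter needed because the Wiener process may be infinite dimensional) show that $\mathbb{P}$ is a local martingale measure. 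This verification is short, but it uses (\ref{est-Gamma-f}), (\ref{est-h}) and (\ref{bounded-gamma-HJM}) in an essential way and cannot be replaced by the bare remark that the drift was chosen according to (\ref{def-alpha-HJM-intro}); you should supply it to complete the proof.
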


\begin{proof}
By virtue of Theorem \ref{thm-group}, (\ref{Lipschitz-sigma-HJM}),
(\ref{Lipschitz-gamma-HJM}), (\ref{bounded-gamma-HJM}),
(\ref{est-embedding}) and (\ref{Lipschitz-desired}), the Assumptions
\ref{ass-group}, \ref{ass-1-mild-hom}, \ref{ass-2-mild-hom} are
fulfilled. Theorem \ref{thm-ex-SDE} applies and establishes the
claimed existence and uniqueness result.

For all $h \in H_{\beta}$, $x \in E$ and $\xi \in \mathbb{R}_+$ we
have by (\ref{est-Gamma-f}), (\ref{est-h}) and (\ref{est-embedding})
\begin{equation}\label{est-Gamma-second-order}
\begin{aligned}
&|e^{\Gamma(h,x)(\xi)} - 1 - \Gamma(h,x)(\xi)| \leq \frac{1}{2}
e^{\Phi(x)} \Gamma(h,x)(\xi)^2
\\ &\leq \frac{1}{2} e^{\Phi(x)} \| \gamma(h,x)
\|_{L^1(\mathbb{R}_+)}^2 \leq \frac{C_3^2}{2} e^{\Phi(x)} \|
\gamma(h,x) \|_{\beta'}^2.
\end{aligned}
\end{equation}
Integrating (\ref{def-alpha-HJM-intro}) we obtain, by using
\cite[Lemma 4.3.2]{fillnm} and (\ref{est-Gamma-second-order}),
(\ref{bounded-gamma-HJM})
\begin{align*}
\int_0^{\bullet} \alpha_{\rm HJM}(h)(\eta)d\eta = \frac{1}{2} \sum_j
\Sigma^j(h)^2 + \int_E \left( e^{\Gamma(h,x)} - 1 - \Gamma(h,x)
\right) F(dx)
\end{align*}
for all $h \in H_{\beta}$. Combining \cite[Prop. 5.3]{BKR} and
\cite[Lemma 4.3.3]{fillnm} (the latter result is only required if
$W$ is infinite dimensional), the probability measure $\mathbb{P}$
is a local martingale measure, and hence the bond market is free of
arbitrage.
\end{proof}

The case of L\'evy-driven HJMM equation is now a special case. We assume that the mark space
is $E = \mathbb{R}^e$ for some $e \in \mathbb{N}$, equipped with its
Borel $\sigma$-algebra $\mathcal{E} = \mathcal{B}(\mathbb{R}^e)$.
The measure $F$ is given by
\begin{align}\label{def-Levy-measure}
F(B) := \sum_{k=1}^e \int_{\mathbb{R}} \mathbbm{1}_B(x f_k) F_k(dx),
\quad B \in \mathcal{B}(\mathbb{R}^e)
\end{align}
where $F_1,\ldots,F_e$ are measures on
$(\mathbb{R},\mathcal{B}(\mathbb{R}))$ satisfying
\begin{align}\label{F-zero-in-zero}
&F_k(\{ 0 \}) = 0, \quad \quad k = 1,\ldots,e
\\ \label{F-standard-cond} &\int_{\mathbb{R}} (| x |^2 \wedge 1) F_k(dx)
< \infty, \quad k = 1,\ldots,e
\end{align}
and where the $(f_k)_{k=1,\ldots,e}$ denote the unit vectors in
$\mathbb{R}^e$. Note that definition (\ref{def-Levy-measure})
implies
\begin{align}\label{Levy-measures}
\int_{\mathbb{R}^e} g(x) F(dx) = \sum_{k=1}^e \int_{\mathbb{R}} g(x
f_k) F_k(dx)
\end{align}
for any nonnegative measurable function $g : \mathbb{R}^e
\rightarrow \mathbb{R}$, in particular, the support of $F$ is
contained in $\bigcup_{k=1}^e {\rm span} \{ f_k \}$, the union of
the coordinate axes in $\mathbb{R}^e$. For each $k = 1,\ldots,e$ let
$\delta_k : H_{\beta} \rightarrow H_{\beta'}^0$ be a map. We define
$\gamma : H_{\beta} \times \mathbb{R}^e \rightarrow H_{\beta'}^0$ as
\begin{align}\label{def-gamma-Levy}
\gamma(h,x) := \sum_{k=1}^e \delta_k(h) x_k \mathbbm{1}_{{\rm span}
\{ f_k \}}(x).
\end{align}
Then, equation (\ref{HJMM-eqn}) corresponds to the situation where
the term structure model is driven by several real-valued,
independent L\'evy processes. For all $h \in H_{\beta}$ and $\xi \in
\mathbb{R}_+$ we set $\Delta_k(h)(\xi) := - \int_0^{\xi}
\delta_k(h)(\eta) d\eta$, $k = 1,\ldots,e$.

\begin{assumption}\label{ass-cor}
We assume there exist constants $N,\epsilon > 0$ such that for all
$k = 1,\ldots,e$ we have
\begin{align}\label{F-exp}
&\int_{\{ | x | > 1 \}} e^{zx} F_k(dx) < \infty, \quad z \in
[-(1+\epsilon)N,(1+\epsilon)N]
\\ \label{domain} & | \Delta_k(h)(\xi) | \leq N,
\quad h \in H_{\beta}, \, \xi \in \mathbb{R}_+
\end{align}
a constant $L > 0$ such that (\ref{Lipschitz-sigma-HJM}) and
\begin{align}\label{Lipschitz-delta-HJM}
\| \delta_k(h_1) - \delta_k(h_2) \|_{\beta'} \leq L \| h_1 - h_2
\|_{\beta}, \quad k = 1,\ldots,e
\end{align}
are satisfied for all $h_1,h_2 \in H_{\beta}$, and a constant $M >
0$ such that (\ref{bounded-sigma-HJM}) and
\begin{align}\label{bounded-delta-HJM}
\| \delta_k(h) \|_{\beta'} \leq M, \quad k = 1,\ldots,e
\end{align}
are satisfied for all $h \in H_{\beta}$.
\end{assumption}

Now, we obtain the statement of \cite[Thm. 4.6]{Filipovic-Tappe} as
a corollary.

\begin{corollary}\label{cor-ex-HJM}
Suppose Assumption \ref{ass-cor} is fulfilled. Then, for each
initial curve $h_0 \in L^2(\Omega,\mathcal{F}_0,\mathbb{P};H_{\beta})$
there exists a unique adapted, c\`adl\`ag, mean square continuous
$\mathcal{H}_{\beta}$-valued solution $(f_t)_{t \geq 0}$ for the HJM
equation (\ref{HJM-eqn}) with $f_0 = \ell h_0$ satisfying
(\ref{solution-in-S2-f}), and there exists a unique adapted,
c\`adl\`ag, mean square continuous mild and weak $H_{\beta}$-valued
solution $(r_t)_{t \geq 0}$ for the HJMM equation (\ref{HJMM-eqn})
with $r_0 = h_0$ satisfying (\ref{solution-in-S2}), which is given
by $r_t := \pi U_t f_t$, $t \geq 0$. Moreover, the implied bond
market (\ref{bond-market}) is free of arbitrage.
\end{corollary}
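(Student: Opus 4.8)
The plan is to deduce this from Theorem \ref{thm-ex-hjm} by verifying that Assumption \ref{ass-cor} implies Assumption \ref{ass-Lipschitz}; the entire conclusion, including freeness of arbitrage, then follows verbatim. First I would record the explicit shape of the characteristic coefficients in the L\'evy setting. By (\ref{def-gamma-Levy}), for a point $x = y f_k$ on the $k$-th coordinate axis we have $\gamma(h, y f_k) = y\, \delta_k(h)$, and hence, recalling $\Delta_k(h)(\xi) = -\int_0^\xi \delta_k(h)(\eta)\,d\eta$,
\begin{align*}
\Gamma(h, y f_k)(\xi) = -\int_0^\xi \gamma(h, y f_k)(\eta)\,d\eta = y\, \Delta_k(h)(\xi).
\end{align*}
Off the coordinate axes $\gamma(h,\cdot)$ vanishes by (\ref{def-gamma-Levy}), so the choice $\Phi(x) := N|x|$ is a measurable function satisfying (\ref{est-Gamma-f}), since $|\Gamma(h, y f_k)(\xi)| = |y|\,|\Delta_k(h)(\xi)| \le N|y|$ by (\ref{domain}).

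The heart of the reduction is a single moment estimate for the $F_k$. Using (\ref{F-standard-cond}) on $\{|y| \le 1\}$ and, on $\{|y| > 1\}$, the elementary bound $|y|^4 \le C_\epsilon\, e^{\epsilon N |y|}$ together with $e^{(1+\epsilon)N|y|} \le e^{(1+\epsilon)Ny} + e^{-(1+\epsilon)Ny}$ and the exponential moment (\ref{F-exp}), I would show
\begin{align*}
\int_{\mathbb{R}} e^{N|y|}\big( |y|^2 \vee |y|^4 \big)\, F_k(dy) < \infty, \quad k = 1,\ldots,e.
\end{align*}
Combined with (\ref{Levy-measures}), which turns every $F$-integral of $e^{\Phi(x)}$ times a function of $\gamma(h,x)$ into the finite sum $\sum_k \int_{\mathbb{R}} e^{N|y|} (\cdot)\, F_k(dy)$, the identities $\| \gamma(h, y f_k) \|_{\beta'}^p = |y|^p \| \delta_k(h) \|_{\beta'}^p$ reduce (\ref{bounded-gamma-HJM}) to the boundedness (\ref{bounded-delta-HJM}) and reduce (\ref{Lipschitz-gamma-HJM}) to the Lipschitz estimate (\ref{Lipschitz-delta-HJM}); the conditions (\ref{Lipschitz-sigma-HJM}) and (\ref{bounded-sigma-HJM}) for $\sigma$ are assumed outright. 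Thus all integrability conditions of Assumption \ref{ass-Lipschitz} hold.

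The remaining and most delicate point is the absolute continuity of $\alpha_2$ in (\ref{def-alpha-2}) together with the weak-derivative formula (\ref{der-alpha-2}). Here I would differentiate the integrand under the integral sign: since each $\delta_k(h) \in H_{\beta'}^0$ is absolutely continuous and $\frac{d}{d\xi}\Gamma(h,x) = -\gamma(h,x)$, the product rule gives, pointwise in $\xi$,
\begin{align*}
\frac{d}{d\xi}\Big( \gamma(h,x)\big( e^{\Gamma(h,x)} - 1 \big) \Big) = \frac{d}{d\xi}\gamma(h,x)\big( e^{\Gamma(h,x)} - 1 \big) - \gamma(h,x)^2 e^{\Gamma(h,x)}.
\end{align*}
Justifying the interchange of $\frac{d}{d\xi}$ and $\int_E$ is exactly where the moment estimate of the previous paragraph is needed: the two families
\begin{align*}
\gamma(h,x)^2 e^{\Gamma(h,x)} \quad \text{and} \quad \tfrac{d}{d\xi}\gamma(h,x)\big( e^{\Gamma(h,x)} - 1 \big)
\end{align*}
are dominated, uniformly in $\xi$ on compacts, by $F$-integrable bounds obtained from (\ref{est-Gamma-f}), the pointwise estimate $|\gamma(h,x)(\xi)| \le C_2 \| \gamma(h,x) \|_{\beta'}$ (cf.\ (\ref{estimate-c2}), (\ref{est-embedding})) and the finiteness of $\int_E e^{\Phi(x)} \| \gamma(h,x) \|_{\beta'}^2 F(dx)$. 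A Fubini and dominated-convergence argument then yields that $\alpha_2(h)$ is absolutely continuous with weak derivative (\ref{der-alpha-2}), completing the verification of Assumption \ref{ass-Lipschitz}. With all hypotheses of Theorem \ref{thm-ex-hjm} in force, its conclusion — existence, uniqueness, the regularity (\ref{solution-in-S2-f}) and (\ref{solution-in-S2}), the representation $r_t = \pi U_t f_t$, and absence of arbitrage — transfers directly to the present L\'evy-driven setting.
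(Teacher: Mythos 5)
Your reduction is correct and follows the paper's overall skeleton: same choice of $\Phi$ up to irrelevant values (your $\Phi(x)=N|x|$ and the paper's $\Phi(x)=N\sum_k |x_k|\mathbbm{1}_{{\rm span}\{f_k\}}(x)$ agree on the coordinate axes, and off the axes $\gamma$ vanishes and $F$ carries no mass, so both satisfy (\ref{est-Gamma-f}) and give identical integrals via (\ref{Levy-measures})); the same exponential moment estimate, which is the paper's (\ref{Levy-estimate}) specialized to the moments $m=2,4$; and the same transfer of (\ref{Lipschitz-delta-HJM}), (\ref{bounded-delta-HJM}) into (\ref{Lipschitz-gamma-HJM}), (\ref{bounded-gamma-HJM}). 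Where you genuinely diverge is the verification that $\alpha_2$ is absolutely continuous with weak derivative (\ref{der-alpha-2}). The paper exploits the product structure $\gamma(h,yf_k)=y\,\delta_k(h)$, $\Gamma(h,yf_k)=y\,\Delta_k(h)$ to write $\alpha_2(h)=-\sum_k \delta_k(h)\,\Psi_k'(\Delta_k(h))$ in terms of the cumulant generating functions $\Psi_k(z)=\int_{\mathbb{R}}(e^{zx}-1-zx)F_k(dx)$; the moment estimate and Lebesgue's theorem give $\Psi_k\in C^\infty$ near $[-N,N]$, and (\ref{der-alpha-2}) then follows by a one-dimensional chain rule. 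You instead differentiate under $\int_E$ directly in $\xi$; this buys more generality — your argument never uses that $\gamma$ factors through $\delta_k$, so it effectively re-proves the implication for any $\gamma$ satisfying the integrability bounds — at the price of a genuinely infinite-dimensional interchange of limits, whereas the paper reduces that interchange to a scalar statement about $\Psi_k$.

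One imprecision you should repair: the claimed domination ``uniformly in $\xi$ on compacts'' is fine for $\gamma(h,x)^2 e^{\Gamma(h,x)}$ (bounded by $C_2^2 e^{\Phi(x)}\|\gamma(h,x)\|_{\beta'}^2$ via (\ref{estimate-c2}), (\ref{est-embedding})), but it fails for the term $\frac{d}{d\xi}\gamma(h,x)\bigl(e^{\Gamma(h,x)}-1\bigr)$, since $\frac{d}{d\xi}\gamma(h,x)$ is only an a.e.-defined function in a weighted $L^2$ space and admits no pointwise bound uniform in $\xi$. What your argument actually needs — and what your appeal to Fubini delivers once stated correctly — is joint integrability on $[0,T]\times E$: by (\ref{est-Gamma}) and the analogue of (\ref{estimate-c1}) for $\beta'$,
\begin{align*}
\int_0^T \int_E \Bigl| \tfrac{d}{d\xi}\gamma(h,x)(\eta) \Bigr| \, \bigl| e^{\Gamma(h,x)(\eta)}-1 \bigr| \, F(dx)\, d\eta \leq C_1 C_3 \int_E e^{\Phi(x)} \| \gamma(h,x) \|_{\beta'}^2 \, F(dx) < \infty,
\end{align*}
which is finite by (\ref{bounded-gamma-HJM}). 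Integrating the pointwise product-rule identity from $0$ to $\xi$ for each fixed $x$ (note $\Gamma(h,x)(0)=0$) and applying Fubini then yields absolute continuity of $\alpha_2(h)$ and (\ref{der-alpha-2}). With that correction, your verification of Assumption \ref{ass-Lipschitz} is complete and the conclusion follows from Theorem \ref{thm-ex-hjm} exactly as in the paper.
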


\begin{proof}
Using (\ref{domain}), the measurable function $\Phi : \mathbb{R}^e
\rightarrow \mathbb{R}_+$ defined as
\begin{align*}
\Phi(x) := N \sum_{k=1}^e |x_k| \mathbbm{1}_{{\rm span}\{ f_k
\}}(x), \quad x \in \mathbb{R}^e
\end{align*}
satisfies (\ref{est-Gamma-f}). For each $k = 1,\ldots,e$ and every
$m \in \mathbb{N}$ with $m \geq 2$ we have, by
(\ref{F-standard-cond}) and (\ref{F-exp}),
\begin{equation}\label{Levy-estimate}
\begin{aligned}
&\int_{\mathbb{R}} |x^m e^{zx}| F_k(dx) \leq \int_{\mathbb{R}} |x|^m
e^{|zx|} F_k(dx) \leq \int_{\mathbb{R}} |x|^m
e^{(1+\frac{\epsilon}{2})N|x|} F_k(dx)
\\ &\leq 2 \int_{\{ |x| \leq \frac{\ln 2}{(1+\frac{\epsilon}{2})N} \}} |x|^m F_k(dx)
+ \frac{m!}{(\frac{\epsilon}{2} N)^m} \int_{\{ |x| > \frac{\ln
2}{(1+\frac{\epsilon}{2})N} \}} e^{(1 + \epsilon)N |x|} F_k(dx) <
\infty
\end{aligned}
\end{equation}
for all $z \in [-(1+\frac{\epsilon}{2})N,(1+\frac{\epsilon}{2})N]$.
Taking into account (\ref{Levy-measures}), (\ref{Levy-estimate}),
relations (\ref{Lipschitz-delta-HJM}), (\ref{bounded-delta-HJM})
imply (\ref{Lipschitz-gamma-HJM}), (\ref{bounded-gamma-HJM}).
Furthermore, (\ref{Levy-estimate}) and Lebesgue's theorem show that
the cumulant generating functions
\begin{align*}
\Psi_k(z) := \int_{\mathbb{R}} ( e^{zx} - 1 - zx ) F_k(dx), \quad k
= 1,\ldots,e
\end{align*}
belong to class $C^{\infty}$ on the open interval
$(-(1+\frac{\epsilon}{4})N,(1+\frac{\epsilon}{4})N)$ with
derivatives
\begin{align*}
\Psi_k'(z) &= \int_{\mathbb{R}} x ( e^{zx} - 1 ) F_k(dx),
\\ \Psi_k^{(m)}(z) &= \int_{\mathbb{R}} x^m e^{zx} F_k(dx), \quad m
\geq 2.
\end{align*}
Therefore, and because of (\ref{Levy-measures}), we can, for an
arbitrary $h \in H_{\beta}$, write $\alpha_2(h)$, which is defined
in (\ref{def-alpha-2}), as
\begin{align*}
\alpha_2(h) = - \sum_{k=1}^e \delta_k(h) \Psi_k' \bigg( -
\int_0^{\bullet} \delta_k(\eta) d\eta \bigg).
\end{align*}
Hence, $\alpha_2(h)$ is absolutely continuous with weak derivative
(\ref{der-alpha-2}). Consequently, Assumption \ref{ass-Lipschitz} is
fulfilled and Theorem \ref{thm-ex-hjm} applies.
\end{proof}

Note that the boundedness assumptions (\ref{bounded-sigma-HJM}),
(\ref{bounded-gamma-HJM}) of Theorem \ref{thm-ex-hjm} resp.
(\ref{bounded-sigma-HJM}), (\ref{bounded-delta-HJM}) of Corollary
\ref{cor-ex-HJM} cannot be weakened substantially. For example, for
arbitrage free term structure models driven by a single Brownian
motion, it was shown in \cite[Sec. 4.7]{Morton} that for the simple
case of proportional volatility, that is $\sigma(h) = \sigma_0 h$
for some constant $\sigma_0 > 0$, solutions necessarily explode. We
mention, however, that \cite[Sec. 6]{P-Z-paper} contains some
existence results for L\'evy term structure models with linear
volatility.

\section{Positivity preserving term structure models driven by Wiener process and Poisson
measures}\label{sec-positivity}

In applications, we are often interested in term structure models producing
positive forward curves. In this section, we characterize HJMM
forward curve evolutions of the type (\ref{HJMM-eqn}), which
preserve positivity, by means of the characteristic coefficients of the SPDE. In the case of short rate models
this can be characterized by the positivity of the short rate, a one-dimensional Markov process. In case of a infinite-factor evolution, as described by a generic HJMM equation (see for instance \cite{bautei:05}), this problem is much more delicate. Indeed, one has to find conditions such that a Markov process defined by the HJMM equation (on a Hilbert space of forward rate curves) stays in a ``small'' set of curves, namely the convex cone of positive curves bounded by a non-smooth set. Our strategy to solve this problem is the following: first we show by general semimartingale methods necessary conditions for positivity. These necessary conditions are basically the described by the facts that the It\^o drift is inward-pointing and that the volatilities are parallel at the boundary of the set of non-negative functions. Taking those conditions we can also prove that the Stratonovich drift is inward pointing, since parallel volatilities produce parallel Stratonovich corrections (a fact which is not true for general closed convex sets but true for the set of non-negative functions $P$). Then we reduce the sufficiency proof to two steps: first we essentially apply results from \cite{Nakayama} in order to solve the pure diffusion case and then we slowly switch on the jumps to see the general result.

Let $H_{\beta}$ be the space of forward curves introduced in Section
\ref{sec-space} for some fixed $\beta > 0$. We introduce the half
spaces
\begin{align*}
H_{\xi}^+ := \{ h \in H_{\beta} \,|\, h(\xi) \geq 0 \}, \quad \xi
\in \mathbb{R}_+
\end{align*}
and define the closed, convex cone
\begin{align*}
P := \bigcap_{\xi \in \mathbb{R}_+} H_{\xi}^+
\end{align*}
consisting of all nonnegative forward curves from $H_{\beta}$. In
what follows, we shall use that, by the continuity of the functions
from $H_{\beta}$, we can write $P$ as
\begin{align*}
P := \bigcap_{\xi \in (0,\infty)} H_{\xi}^+.
\end{align*}
Furthermore the edges
\begin{align*}
\partial P_{\xi} := \{ h \in P \,|\, h(\xi) = 0 \}, \quad \xi \in (0,\infty).
\end{align*}
First, we consider the positivity problem for general forward curve
evolutions, where the HJM drift condition
(\ref{def-alpha-HJM-intro}) is not necessarily satisfied, and
afterwards we apply our results to the arbitrage free situation.

We emphasize that, in the sequel, we \textit{assume} the existence
of solutions. Sufficient conditions for existence and uniqueness are provided in
Appendix \ref{app-SDE} for general stochastic partial differential
equations and in the previous Section \ref{sec-ex-tsm} for the HJMM
term structure equation (\ref{HJMM-eqn}).

Again, the framework is the same as in Appendix \ref{app-SDE} with
$H = H_{\beta}$ being the space of forward curves, equipped with the
strongly continuous semigroup $(S_t)_{t \geq 0}$ of shifts, which
has the infinitesimal generator $A = \frac{d}{d \xi}$.

At first glance, it looks reasonable to treat the positivity problem
by working with \textit{weak} solutions on $H_{\beta}$. However,
this is unfeasible, because -- as the next lemma reveals -- the point evaluations at $\xi
\in (0,\infty)$, i.e., a linear functional $\zeta \in H_{\beta}$ such
that $h(\xi) = \langle \zeta, h \rangle$ for all $h \in H_{\beta}$, do never belong
to the domain $\mathcal{D}((\frac{d}{d\xi})^*)$ of the adjoint operator.

\begin{lemma}
For each $\xi \in (0,\infty)$ the linear functional $h \mapsto
h'(\xi) : \mathcal{D}(\frac{d}{d\xi}) \rightarrow
\mathbb{R}$ is unbounded.
\end{lemma}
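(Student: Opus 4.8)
The plan is to read the statement as asserting that $h \mapsto h'(\xi)$ is unbounded \emph{with respect to the norm $\|\cdot\|_{\beta}$} on the domain $\mathcal{D}(\frac{d}{d\xi})$, which is the relevant notion here: it is precisely this failure that will prevent a point evaluation from lying in $\mathcal{D}((\frac{d}{d\xi})^*)$. (With respect to the graph norm the functional is of course bounded, since $|h'(\xi)| \leq C_2 \| h' \|_{\beta}$ by Theorem \ref{thm-group}(2).) Accordingly, I would produce a sequence $(h_n) \subset \mathcal{D}(\frac{d}{d\xi})$ with $\|h_n\|_{\beta}$ bounded but $|h_n'(\xi)| \to \infty$.

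The key observation is that for $h$ with $h(0)=0$ one has $\|h\|_{\beta}^2 = \int_{\mathbb{R}_+} |h'(\eta)|^2 e^{\beta \eta}\,d\eta$, so controlling $\|h\|_{\beta}$ amounts to controlling $h'$ only in the weighted space $L^2(\mathbb{R}_+, e^{\beta \eta}\,d\eta)$. Since point evaluation at $\xi$ is never bounded on an $L^2$-space, there is no chance of estimating $h'(\xi)$ by $\|h'\|_{L^2(e^{\beta \cdot})}$, and this is the mechanism behind the unboundedness. Concretely, I would fix a bump $\phi \in C_c^{\infty}(\mathbb{R})$ with $\phi(0)=1$ and $\operatorname{supp}\phi \subset (-1,1)$, and for $n$ large enough that $(\xi - \tfrac1n, \xi + \tfrac1n) \subset (0,\infty)$ set $\psi_n(\eta) := \phi(n(\eta-\xi))$ and $g_n := \psi_n / \| \psi_n \|$, where $\|\cdot\|$ denotes the $L^2(e^{\beta \cdot})$-norm. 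A change of variables gives $\| \psi_n \|^2 \sim e^{\beta \xi} \| \phi \|_{L^2}^2 / n \to 0$, so that $g_n(\xi) = 1/\|\psi_n\| \to \infty$ while $\| g_n \| = 1$. Defining $h_n(\eta) := \int_0^{\eta} g_n(s)\,ds$ yields $h_n(0)=0$ and $h_n' = g_n$, hence $\| h_n \|_{\beta} = \| g_n \| = 1$ and $h_n'(\xi) = g_n(\xi) \to \infty$, which is the desired conclusion.

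The analysis is elementary, so the only point requiring care---and the step I expect to be the main (mild) obstacle---is the bookkeeping of domain membership: one must check not merely that each $h_n$ lies in $H_{\beta}$, but that $h_n' = g_n$ lies in $H_{\beta}$ as well, since it is this that places $h_n$ inside $\mathcal{D}(\frac{d}{d\xi})$. Choosing the spikes $g_n$ smooth and compactly supported in $(0,\infty)$ (rather than merely continuous) makes $g_n \in C_c^{\infty}((0,\infty)) \subset H_{\beta}$ immediate and settles this point.
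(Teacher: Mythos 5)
Your proof is correct and follows essentially the same route as the paper: both arguments exhibit $h_n \in \mathcal{D}(\frac{d}{d\xi})$ whose derivatives are smooth bumps of shrinking width concentrated at $\xi$, so that $|h_n'(\xi)|$ grows like $\sqrt{n}\,\|h_n\|_{\beta}$ (the paper takes mollifiers $\varphi_n(\eta)=n\varphi(n\eta)$ with $|h_n'(\xi)|=n$ and $\|h_n\|_{\beta}^2 \leq 2e^{\beta(\xi+1)}n$, while you normalize the bump to $\|h_n\|_{\beta}=1$ --- a purely cosmetic difference). Your preliminary reading of the statement (unboundedness with respect to $\|\cdot\|_{\beta}$, not the graph norm) and your care about $h_n' \in H_{\beta}$ for domain membership both match the paper's intent and construction.
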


\begin{proof}
Let $\xi \in (0,\infty)$ be arbitrary. We define $\psi : \mathbb{R}
\rightarrow \mathbb{R}$ as
\begin{align*}
\psi(\eta) :=
\begin{cases}
e^{-\frac{1}{\eta}}, & \eta > 0
\\ 0, & \eta \leq 0.
\end{cases}
\end{align*}
Furthermore, we define $\varphi : \mathbb{R} \rightarrow \mathbb{R}$
as $\varphi(\eta) := e \psi(1 - \eta^2)$, and for each $n \in
\mathbb{N}$ we define the mollifier $\varphi_n : \mathbb{R}
\rightarrow \mathbb{R}$ by $\varphi_n(\eta) := n \varphi(n \eta)$.
Then, for each $n \in \mathbb{N}$ we have $\varphi_n \in
C^{\infty}(\mathbb{R})$ with ${\rm supp}(\varphi_n) \subset
[-\frac{1}{n},\frac{1}{n}]$, see, e.g., \cite[p. 81,82]{Werner}.
There exists $n_0 \in \mathbb{N}$ such that $\xi - \frac{1}{n} > 0$
for all $n \geq n_0$. For each $n \geq n_0$ we define $g_n :
\mathbb{R} \rightarrow \mathbb{R}$ as $g_n(\eta) := \int_0^{\eta}
\varphi_n(\zeta - \xi)d\zeta$ and $h_n : \mathbb{R}_+ \rightarrow
\mathbb{R}$ as $h_n := g_n|_{\mathbb{R}_+}$.

Then we have $h_n \in \mathcal{D}(\frac{d}{d\xi})$ with
$|h_n'(\eta)| \leq n$, $\eta \in [\xi - \frac{1}{n}, \xi +
\frac{1}{n}]$ and $|h_n'(\xi)| = n$ for all $n \geq n_0$. The
estimate
\begin{align*}
\| h_n \|_{\beta}^2 = \int_{\xi - \frac{1}{n}}^{\xi + \frac{1}{n}}
|h_n'(\eta)|^2 e^{\beta \eta} d\eta \leq \frac{2}{n} n^2
e^{\beta(\xi + 1)} = 2 e^{\beta(\xi + 1)} n, \quad n \geq n_0
\end{align*}
shows that the linear functional $h \mapsto h'(\xi) :
\mathcal{D}(\frac{d}{d\xi}) \rightarrow \mathbb{R}$ is
unbounded.
\end{proof}

Therefore treating the positivity problem with weak solutions does not bring an immediate advantage, hence we shall work with \textit{mild} solutions on $H_{\beta}$.

Let $\alpha : H_{\beta} \rightarrow H_{\beta}$, $\sigma : H_{\beta}
\rightarrow L_2^0(H_{\beta})$ and $\gamma : H_{\beta} \times E
\rightarrow H_{\beta}$ be given. For each $j$ we define $\sigma^j :
H_{\beta} \rightarrow H_{\beta}$ as $\sigma^j(h) := \sqrt{\lambda_j}
\sigma(h) e_j$. We assume that for each $h_0 \in P$ the HJM equation
\begin{align}\label{HJM-eqn-new}
\left\{
\begin{array}{rcl}
df_t & = & U_{-t} \ell \alpha(\pi U_t f_t)dt + \sum_j U_{-t} \ell
\sigma^j(\pi U_t f_t) d\beta_t^j
\\ && + \int_E U_{-t}
\ell \gamma(\pi U_t f_{t-},x) (\mu(dt,dx) - F(dx)dt)
\medskip
\\ f_0 & = & \ell h_0,
\end{array}
\right.
\end{align}
has at least one solution $(f_t)_{t \geq 0}$. Then, because of
(\ref{diagram-commutes}), the transformation $r_t := \pi U_t f_t$,
$t \geq 0$ is a mild solution of the HJMM equation
\begin{align}\label{equation-new}
\left\{
\begin{array}{rcl}
dr_t & = & (\frac{d}{d\xi} r_t + \alpha(r_t))dt + \sum_{j}
\sigma^j(r_t) d\beta_t^j + \int_E \gamma(r_{t-},x) (\mu(dt,dx) -
F(dx)dt)
\medskip
\\ r_0 & = & h_0.
\end{array}
\right.
\end{align}

\begin{definition}
The HJMM equation (\ref{equation-new}) is said to be {\rm positivity
preserving} if for all $h_0 \in
L^2(\Omega,\mathcal{F}_0,\mathbb{P};H_{\beta})$ with $\mathbb{P}(h_0
\in P) = 1$ and every solution $(f_t)_{t \geq 0}$ of
(\ref{HJM-eqn-new}) with $f_0 = \ell h_0$ we have $\mathbb{P}(
\bigcap_{t \in \mathbb{R}_+} \{ r_t \in P \} ) = 1$, where $r_t :=
\pi U_t f_t$, $t \geq 0$.
\end{definition}

\begin{remark}
Note that seemingly weaker condition that $\mathbb{P}( \{ r_t \in P
\} ) = 1$ for all $ t \in \mathbb{R}_+ $ is equivalent to condition
of the previous definition due to the c\`{a}dl\`{a}g property of the
trajectories.
\end{remark}

\begin{definition}
The HJMM equation (\ref{equation-new}) is said to be {\rm locally
positivity preserving} if for all $h_0 \in
L^2(\Omega,\mathcal{F}_0,\mathbb{P};H_{\beta})$ with $\mathbb{P}(h_0
\in P) = 1$ and every solution $(f_t)_{t \geq 0}$ of
(\ref{HJM-eqn-new}) with $f_0 = \ell h_0$ there exists a strictly
positive stopping time $\tau$ such that $\mathbb{P}( \bigcap_{t \in
\mathbb{R}_+} \{ r_{t \wedge \tau} \in P \} ) = 1$, where $r_t :=
\pi U_t f_t$, $t \geq 0$.
\end{definition}

\begin{lemma}\label{lemma-positivity}
Let $h_0 \in P$ be arbitrary and let $(f_t)_{t \geq 0}$ be a
solution for (\ref{HJM-eqn-new}) with $f_0 = \ell h_0$. Set $r_t :=
\pi U_t f_t$, $t \geq 0$. The following two statements are
equivalent:
\begin{enumerate}
\item We have $\mathbb{P}( \bigcap_{t \in \mathbb{R}_+} \{ r_t \in P \} ) =
1$.

\item We have $\mathbb{P}( \bigcap_{t \in [0,T]} \{ f_t(T) \geq 0 \} ) =
1$ for all $T \in (0,\infty)$.
\end{enumerate}
\end{lemma}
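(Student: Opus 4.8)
The plan is to reduce both statements to the single assertion that, almost surely, $f_t(u) \geq 0$ for every pair $(t,u)$ with $0 \leq t \leq u$, and to move between the two formulations by rearranging quantifiers. The bridge is the pointwise identity $r_t(\xi) = (\pi U_t f_t)(\xi) = f_t(t+\xi)$ for $\xi \geq 0$, which is immediate from the definitions of $U_t$ and $\pi$. Since $P = \bigcap_{\xi \in (0,\infty)} H_{\xi}^+$, membership $r_t \in P$ is equivalent to $f_t(u) \geq 0$ for all $u > t$. Writing $B_T := \{ f_t(T) \geq 0 \text{ for all } t \in [0,T] \}$ for $T \in (0,\infty)$, I would first record the set-theoretic identity
\[
\bigcap_{t \in \mathbb{R}_+} \{ r_t \in P \} = \bigcap_{T \in (0,\infty)} B_T ,
\]
which holds (up to the irrelevant boundary value $u = t$, absorbed by continuity) because both sides encode positivity of $f_t(u)$ on the triangular region $\{ 0 \leq t \leq u \}$, read once along horizontal slices (fixed $t$, varying maturity $u \geq t$) and once along vertical slices (fixed maturity $T$, varying $t \in [0,T]$).

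With this identity in hand, statement (1) is exactly $\mathbb{P}\big( \bigcap_{T \in (0,\infty)} B_T \big) = 1$, whereas statement (2) reads $\mathbb{P}(B_T) = 1$ for every $T$. The implication (1)$\Rightarrow$(2) is then trivial, since $\bigcap_{T} B_T \subseteq B_{T_0}$ for each fixed $T_0 \in (0,\infty)$.

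For (2)$\Rightarrow$(1) I would first restrict to rational maturities and set $\Omega_0 := \bigcap_{T \in \mathbb{Q} \cap (0,\infty)} B_T$; being a countable intersection of probability-one events, $\mathbb{P}(\Omega_0) = 1$. The key step is then to upgrade $\Omega_0$ to the full intersection over all real maturities. Fixing $\omega \in \Omega_0$ and $t \geq 0$, I have $f_t(T) \geq 0$ (at this $\omega$) for every rational $T \geq t$; since every element of $\mathcal{H}_{\beta}$ is absolutely continuous, hence continuous, in its argument, and the rationals in $[t,\infty)$ are dense there, continuity of the curve $f_t$ yields $f_t(u) \geq 0$ for all real $u \geq t$. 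Reindexing, this says $\omega \in B_T$ for every real $T > 0$, so that $\Omega_0 \subseteq \bigcap_{T \in (0,\infty)} B_T$ and the latter event has probability one.

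The main obstacle — indeed essentially the only point requiring care — is this passage from countably many maturities to the continuum: it is precisely the continuity of the forward curves in the maturity variable, a structural feature of $\mathcal{H}_{\beta}$, that allows one to fill in the non-rational maturities, so that a countable family of probability-one conditions already forces positivity everywhere. A secondary, routine point is the measurability of the events $B_T$ and $\bigcap_{t} \{ r_t \in P \}$ over their uncountable index sets; both reduce to countable conditions by the c\`adl\`ag property of $t \mapsto f_t$ (for $B_T$ via right-continuity of the real process $t \mapsto f_t(T)$, and for $\bigcap_t \{ r_t \in P \}$ via right-continuity of $r$ together with closedness of $P$, as already noted in the remark preceding the lemma).
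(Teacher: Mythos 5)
Your proposal is correct and is essentially an expanded version of the paper's own (one-line) proof, which justifies the equivalence by exactly the ingredients you make explicit: the identity $r_t(\xi) = f_t(t+\xi)$, the c\`adl\`ag property of $(r_t)_{t\geq 0}$ and $(f_t(T))_{t\in[0,T]}$ to reduce the uncountable intersections to countable ones, and the continuity in the maturity variable of the curves to pass from (countably many) maturities to all of them. Your careful handling of the diagonal boundary $u=t$ and of measurability fills in details the paper leaves implicit, but the route is the same.
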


\begin{proof}
The claim follows, because the processes $(r_t)_{t \geq 0}$ and
$(f_t(T))_{t \in [0,T]}$ for an arbitrary $T \in (0,\infty)$ are
c\`adl\`ag, and because the functions from $H_{\beta}$ are
continuous.
\end{proof}

\begin{assumption}\label{ass-nec}
We assume that the maps $\alpha : H_{\beta} \rightarrow H_{\beta}$
and $\sigma : H_{\beta} \rightarrow L_2^0(H_{\beta})$ are continuous
and that $h \mapsto \int_{B} \gamma(h,x) F(dx)$ is continuous on
$H_{\beta}$ for all $B \in \mathcal{E}$ with $F(B) < \infty$.
\end{assumption}

\begin{proposition}\label{prop-pos-general-nec}
Suppose Assumption \ref{ass-nec} is fulfilled. If equation
(\ref{equation-new}) is positivity preserving, then we have
\begin{align}\label{cond-integrable-pre}
&\int_{E} \gamma(h,x)(\xi) F(dx) < \infty, \text{ for all } \xi \in (0,\infty),
\, h \in
\partial P_{\xi}
\\ \label{cond-alpha-pre} &\alpha(h)(\xi) - \int_{E}
\gamma(h,x)(\xi) F(dx) \geq 0, \text{ for all } \xi \in (0,\infty), \, h \in
\partial P_{\xi}
\\ \label{cond-sigma-pre} &\sigma^j(h)(\xi) = 0, \text{ for all } \xi \in (0,\infty),\, h \in
\partial P_{\xi} \text{ and all $j$}
\\ \label{cond-gamma-pre} &h + \gamma(h,x) \in P, \text{ for all } \, h \in P \text{ and $F$-almost all $x \in E$.}
\end{align}
\end{proposition}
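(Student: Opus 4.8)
The plan is to reduce the infinite-dimensional positivity requirement to the nonnegativity of a family of real-valued semimartingales and to read off \eqref{cond-integrable-pre}--\eqref{cond-gamma-pre} from their behaviour at time zero. Fix $\xi \in (0,\infty)$ and $h \in \partial P_{\xi}$, and let $(f_t)_{t\geq 0}$ solve \eqref{HJM-eqn-new} with $f_0 = \ell h$. By Theorem \ref{thm-group} the point evaluation $\mathrm{ev}_{\xi} : \mathcal{H}_{\beta} \to \mathbb{R}$, $g \mapsto g(\xi)$, is a continuous linear functional; since $(U_{-s}\ell g)(\xi) = g(\xi-s)$ for $s \in [0,\xi]$ and $\pi U_s f_s = r_s$, applying $\mathrm{ev}_{\xi}$ to \eqref{HJM-eqn-new} shows that
\begin{align*}
X_t := f_t(\xi) &= \int_0^t \alpha(r_s)(\xi-s)\,ds + \sum_j \int_0^t \sigma^j(r_s)(\xi-s)\,d\beta_s^j \\
&\quad + \int_0^t\!\!\int_E \gamma(r_{s-},x)(\xi-s)\,(\mu(ds,dx)-F(dx)\,ds)
\end{align*}
is a real semimartingale on $[0,\xi]$ with $X_0 = h(\xi) = 0$ whose coefficients at $t=0$ are precisely $\alpha(h)(\xi)$, $\sigma^j(h)(\xi)$ and $\gamma(h,x)(\xi)$. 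By the positivity preserving hypothesis and Lemma \ref{lemma-positivity}, $X_t \ge 0$ for all $t \in [0,\xi]$ almost surely, so the four conditions will emerge from the constraint that a nonnegative semimartingale started at its boundary value $0$ must have inward-pointing characteristics at that boundary.

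I would first establish the jump condition \eqref{cond-gamma-pre}, which also supplies the sign information used afterwards. Starting \eqref{HJM-eqn-new} from an arbitrary $h \in P$ and fixing $B \in \mathcal{E}$ with $F(B) < \infty$, at the first jump time $\tau$ with mark in $B$ one has $r_{\tau} = r_{\tau-} + \gamma(r_{\tau-},X_{\tau}) \in P$, because $\pi\ell = \mathrm{id}$ forces $\Delta r_{\tau} = \gamma(r_{\tau-},X_{\tau})$ and $P$ is closed. Localising the jump near $t=0$ and using the mean-square continuity of $r$ to replace $r_{\tau-}$ by $h$, together with the continuity of $h \mapsto \int_B \gamma(h,x)F(dx)$ from Assumption \ref{ass-nec} and a monotone-class passage $B \uparrow E$, yields $h + \gamma(h,x) \in P$ for $F$-almost all $x$, i.e.\ \eqref{cond-gamma-pre}. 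Specialising to $h \in \partial P_{\xi}$ gives $\gamma(h,x)(\xi) \ge 0$ for $F$-almost all $x$.

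Next comes the volatility condition \eqref{cond-sigma-pre}. The continuous local-martingale part of $X$ is $X_t^c = \sum_j \int_0^t \sigma^j(r_s)(\xi-s)\,d\beta_s^j$ with $d\langle X^c\rangle_s = \sum_j \sigma^j(r_s)(\xi-s)^2\,ds$, while the predictable finite-variation part of $X$ is $\int_0^{\cdot}\alpha(r_s)(\xi-s)\,ds$, which is absolutely continuous in $t$ and carries no singular local-time component. The decisive point is that a nonnegative semimartingale which starts at $0$, whose drift is absolutely continuous and whose jumps at the boundary are nonnegative, cannot carry a non-degenerate continuous martingale part at the starting point: such a part would generate a strictly positive local time at level $0$ that only a singular (reflecting) drift could absorb. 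Making this precise with the occupation-time/Tanaka formula and the right-continuity of $s \mapsto \sigma^j(r_s)(\xi-s)$ forces $\sum_j \sigma^j(h)(\xi)^2 = 0$, hence \eqref{cond-sigma-pre}.

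Finally I would treat \eqref{cond-integrable-pre} and \eqref{cond-alpha-pre} together by analysing the predictable drift of $X$ at $t=0$. Since \eqref{cond-gamma-pre} gives $\gamma(h,x)(\xi) \ge 0$, the compensator $\int_0^{\cdot}\!\int_E \gamma(r_s,x)(\xi-s)F(dx)\,ds$ has a definite sign, and the nonnegativity of $X$ started from $0$ with vanishing diffusion (by \eqref{cond-sigma-pre}) forces the net drift rate $\alpha(h)(\xi) - \int_E \gamma(h,x)(\xi)F(dx)$ to be a well-defined, finite and nonnegative number: finiteness is \eqref{cond-integrable-pre} and the sign is \eqref{cond-alpha-pre}. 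Heuristically, were the jump compensator infinite it could not be balanced by the finite value $\alpha(h)(\xi)$ (point evaluation is continuous on $H_{\beta}$), and a strictly negative net drift would push $X$ below $0$ immediately before any compensating jump. The main obstacle is exactly this last step in the infinite-activity regime, where the jumps need not be summable: one must argue directly on the predictable characteristics (in the sense of \cite{Jacod-Shiryaev}) that the half-line $[0,\infty)$ is invariant for $X$, justifying simultaneously that the uncompensated drift is well-defined and inward-pointing, rather than merely recovering $\alpha(h)(\xi) \ge 0$ from taking expectations.
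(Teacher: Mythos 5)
Your reduction to the scalar semimartingale $X_t = f_t(\xi)$ with time-zero coefficients $\alpha(h)(\xi)$, $\sigma^j(h)(\xi)$, $\gamma(h,x)(\xi)$ is correct and is indeed the same starting point as the paper's equation (\ref{dynamics-R}), but at each of the three subsequent steps your proposal substitutes a heuristic for the actual difficulty, and two of them fail as written. (i) For \eqref{cond-gamma-pre}, your first-jump argument must pass from $r_{\tau-} + \gamma(r_{\tau-},x_\tau) \in P$ to $h + \gamma(h,x) \in P$ as the jump time is localized near $0$; this requires continuity of $g \mapsto \gamma(g,x)$ for \emph{fixed} $x$, which Assumption \ref{ass-nec} does not provide --- it only gives continuity of the integrated map $g \mapsto \int_B \gamma(g,x)\,F(dx)$. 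The step is repairable (condition on the independent mark so that only the continuous functional $g \mapsto F(B')\,g(T) + \bigl(\int_{B'} \gamma(g,x)\,F(dx)\bigr)(T)$ on a set $B' = B_T \cap C$ of positive measure is needed, and argue by contradiction as the paper does), but your appeal to the integrated continuity does not by itself justify the pointwise-in-$x$ conclusion. (ii) For \eqref{cond-sigma-pre}, the occupation-time identity $\int_0^t \mathbbm{1}_{\{X_s = 0\}}\, d\langle X^c \rangle_s = 0$ holds for \emph{every} semimartingale --- Brownian motion started at $0$ satisfies it with unit diffusion --- so it yields nothing; and the ``local time at $0$ can only be absorbed by a singular drift'' heuristic does not close either, since a Tanaka computation shows $L^0(X)$ is itself absolutely continuous whenever the finite-variation part is, so no contradiction emerges from local time alone. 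The genuine content is that a nondegenerate Brownian part forces $X$ strictly below $0$ despite the compensated (possibly infinite-activity) jump martingale, and that argument is absent. (iii) For \eqref{cond-integrable-pre} and \eqref{cond-alpha-pre} you explicitly concede the infinite-activity obstacle rather than resolving it, so this step is openly incomplete.

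The paper disposes of all three points with a single device that your proposal lacks: a Dol\'eans-Dade exponential $Z = \mathcal{E}\bigl( \sum_j \phi^j \beta^j + \int_0^{\bullet} \int_E \psi(x)\,(\mu(ds,dx) - F(dx)ds) \bigr)$ with $\psi = c\,\mathbbm{1}_B$, $c > -1$, $F(B) < \infty$. Integration by parts on $f_t(T)Z_t$ yields a local martingale plus the absolutely continuous drift $Z_{s-}\bigl( \alpha + \sum_j \phi^j \sigma^j + \int_E \psi\, \gamma\, F(dx) \bigr)$ evaluated along the moving frame; after localization by stopping times, an adversarial choice of $(\phi,\psi)$ makes $\mathbb{E}[f_\tau(T) Z_\tau] < 0$, contradicting $f_\tau(T) \geq 0$. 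Large positive $\psi$ on the bad set gives \eqref{cond-gamma-pre}; large opposite-signed $\phi^j$ gives \eqref{cond-sigma-pre} with no local-time analysis at all; $\psi = -\tfrac{1}{2}\mathbbm{1}_B$ gives \eqref{cond-integrable-pre}; and the truncated tilts $\psi_n = -(1-\tfrac{1}{n})\mathbbm{1}_{B_n}$ followed by monotone convergence settle precisely the infinite-activity drift issue you identify as your main obstacle. If you wish to complete your program, this measure-tilt computation (or an equivalent quantitative substitute) is the missing ingredient at all three points.
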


\begin{remark}\label{remark-continuity}
Notice that, by H\"older's inequality, Assumption \ref{ass-nec} is
implied by Assumptions \ref{ass-1-mild-hom}, \ref{ass-2-mild-hom},
and therefore in particular by Assumption \ref{ass-Lipschitz}.
\end{remark}

\begin{remark}
In view of (\ref{cond-integrable-pre}), observe that condition
(\ref{cond-gamma-pre}) implies
\begin{align}\label{observation}
\gamma(h,x)(\xi) \geq 0, \text{ for all } \xi \in (0,\infty), \, h \in
\partial P_{\xi} \text{ and $F$-almost all $x \in E$.}
\end{align}
\end{remark}

\begin{remark}
Notice that conditions \eqref{cond-integrable-pre} and \eqref{cond-alpha-pre} can be unified to
$$
\int_{E} |\gamma(h,x)(\xi) | F(dx) \leq \alpha(h)(\xi)
$$
for all $ \xi \geq 0 $ and $ h \in \partial P_{\xi} $.
\end{remark}

\begin{proof}
Let $h_0 \in P$ be arbitrary and let $(f_t)_{t \geq 0}$ be a
solution for (\ref{HJM-eqn-new}) with $f_0 = \ell h_0$. By Lemma
\ref{lemma-positivity}, for each $T \in (0,\infty)$ and every
stopping time $\tau \leq T$ we have
\begin{align}\label{pos-contr}
\mathbb{P}(f_{\tau}(T) \geq 0) = 1.
\end{align}
Let $\phi \in U_0'$ be a linear functional such that $\phi^j := \phi
e_j \neq 0$ for only finitely many $j$, and let $\psi : E
\rightarrow \mathbb{R}$ be a measurable function of the form $\psi =
c \mathbbm{1}_B$ with $c > -1$ and $B \in \mathcal{E}$ satisfying
$F(B) < \infty$. Let $Z$ be the Dol\'eans-Dade Exponential
\begin{align*}
Z_t = \mathcal{E} \bigg( \sum_{j} \phi^j \beta^j + \int_0^{\bullet}
\int_{E} \psi(x) (\mu(ds,dx) - F(dx)ds) \bigg)_t, \quad t \geq 0.
\end{align*}
By \cite[Thm. I.4.61]{Jacod-Shiryaev} the process $Z$ is a solution
of
\begin{align*}
Z_t = 1 + \sum_{j} \phi^j \int_0^t Z_s d\beta_s^j + \int_0^t
\int_{E} Z_{s-} \psi(x) (\mu(ds,dx) - F(dx)ds), \quad t \geq 0
\end{align*}
and, since $\psi > -1$, the process $Z$ is a strictly positive local
martingale. There exists a strictly positive stopping time $\tau_1$
such that $Z^{\tau_1}$ is a martingale. Due to the method of the moving frame, see \cite{SPDE}, we can use
standard stochastic analysis, to proceed further. For an arbitrary $T \in
(0,\infty)$, integration by parts yields (see \cite[Thm.
I.4.52]{Jacod-Shiryaev})
\begin{equation}\label{product}
\begin{aligned}
f_t(T) Z_t &= \int_0^t f_{s-}(T) dZ_s + \int_0^t Z_{s-} df_s(T) +
\langle f(T)^{c},Z^c \rangle_t
\\ &\quad + \sum_{s \leq t} \Delta f_s(T) \Delta Z_s, \quad t \geq 0.
\end{aligned}
\end{equation}
Taking into account the dynamics
\begin{equation}\label{dynamics-R}
\begin{aligned}
f_t(T) &= \ell h_0(T) + \int_0^t U_{-s} \ell \alpha(\pi U_s f_s)(T)
ds + \sum_{j} \int_0^t U_{-s} \ell \sigma^j(\pi U_s f_s)(T)
d\beta_s^j
\\ &\quad + \int_0^t \int_{E} U_{-s} \ell
\gamma(\pi U_s f_{s-},x)(T) (\mu(ds,dx) - F(dx)ds), \quad t \geq 0
\end{aligned}
\end{equation}
we have
\begin{align}\label{rhs0}
\langle f(T)^{c}, Z^c \rangle_t &= \sum_{j} \phi^j \int_0^t Z_s
U_{-s} \ell \sigma^j(\pi U_s f_s)(T) ds, \quad t \geq 0
\\ \label{rhs} \sum_{s \leq t} \Delta f_s(T) \Delta Z_s &= \int_0^t
\int_{E} Z_{s-} \psi(x) U_{-s} \ell \gamma(\pi U_s f_{s-},x)(T)
\mu(ds,dx), \quad t \geq 0.
\end{align}
Incorporating (\ref{dynamics-R}), (\ref{rhs0}) and (\ref{rhs}) into
(\ref{product}), we obtain
\begin{equation}\label{make-contradiction}
\begin{aligned}
f_t(T) Z_t &= M_t + \int_0^t Z_{s-} \bigg( U_{-s} \ell \alpha(\pi
U_s f_{s-})(T) + \sum_{j} \phi^j U_{-s} \ell \sigma^j(\pi U_s
f_{s-})(T)
\\ &\quad + \int_{E}
\psi(x) U_{-s} \ell \gamma(\pi U_s f_{s-},x)(T) F(dx) \bigg) ds,
\quad t \geq 0
\end{aligned}
\end{equation}
where $M$ is a local martingale with $M_0 = 0$. There exists a
strictly positive stopping time $\tau_2$ such that $M^{\tau_2}$ is a
martingale.

By Assumption \ref{ass-nec} there exist strictly positive stopping
times $\tau_3,\tau_4,\tau_5$ and constants
$\tilde{\alpha},\tilde{\sigma}(\phi),\tilde{\gamma}(\psi) > 0$ such
that
\begin{align*}
&|U_{-(t \wedge \tau_3)} \ell \alpha(\pi U_{t \wedge \tau_3} f_{(t
\wedge \tau_3)-})(T) | \leq \tilde{\alpha}, \quad t \geq 0
\\ &\bigg| \sum_j \phi^j U_{-(t \wedge \tau_4)} \ell \sigma^j(\pi U_{t
\wedge \tau_4} f_{(t \wedge \tau_4)-})(T) \bigg| \leq
\tilde{\sigma}(\phi), \quad t \geq 0
\\&\bigg| \int_{E} \psi(x) U_{-(t \wedge \tau_5)} \ell \gamma(\pi U_{t
\wedge \tau_5} f_{(t \wedge \tau_5)-},x)(T) F(dx) \bigg| \leq
\tilde{\gamma}(\psi), \quad t \geq 0.
\end{align*}
Let $B := \{ x \in E : h_0 + \gamma(h_0,x) \notin P \}$. In order to
prove (\ref{cond-gamma-pre}), it suffices, since $F$ is
$\sigma$-finite, to show that $F(B \cap C) = 0$ for all $C \in
\mathcal{E}$ with $F(C) < \infty$. Suppose, on the contrary, there
exists $C \in \mathcal{E}$ with $F(C) < \infty$ such that $F(B \cap
C) > 0$. By the continuity of the functions from $H_{\beta}$, there
exists $T \in (0,\infty)$ such that $F(B_T \cap C) > 0$, where
$B_{T} := \{ x \in E : h_0(T) + \gamma(h_0,x)(T) < 0 \}$. We obtain
\begin{align*}
\int_{B_T \cap C} \gamma(h_0,x)(T) F(dx) \leq \int_{B_T \cap C}
(h_0(T) + \gamma(h_0,x)(T)) F(dx) < 0.
\end{align*}
By Assumption \ref{ass-nec} and left continuity of the process $ f_{.-} $, there exist $\eta > 0$ and a strictly
positive stopping time $\tau_6 \leq T$ such that
\begin{align*}
\int_{B_T \cap C} U_{-(t \wedge \tau_6)} \ell \gamma(\pi U_{(t
\wedge \tau_6)} f_{(t \wedge \tau_6)-},x)(T) F(dx) \leq - \eta,
\quad t \geq 0.
\end{align*}
Let $\phi := 0$, $\psi := \frac{\tilde{\alpha} + 1}{\eta}
\mathbbm{1}_{B_T \cap C}$ and $\tau := \bigwedge_{i=1}^6 \tau_i$.
Taking expectation in (\ref{make-contradiction}) we obtain
$\mathbb{E}[f_{\tau}(T) Z_{\tau}] < 0$, implying
$\mathbb{P}(f_{\tau}(T) < 0) > 0$, which contradicts
(\ref{pos-contr}). This yields (\ref{cond-gamma-pre}).

From now on, we assume that $h_0 \in \partial P_{T}$ for an
arbitrary $T \in (0,\infty)$.

Suppose that $\sigma^j(h_0)(T) \neq 0$ for some $j$. By the
continuity of $\sigma$ (see Assumption \ref{ass-nec}) there exist
$\eta > 0$ and a strictly positive stopping time $\tau_6 \leq T$
such that
\begin{align*}
|U_{-(t \wedge \tau_6)} \ell \sigma^{j}(\pi U_{t \wedge \tau_6}
f_{(t \wedge \tau_6)-})(T)| \geq \eta, \quad t \geq 0.
\end{align*}
Let $\phi \in U_0'$ be the linear functional with $\phi^j = -{\rm
sign}(\sigma^{j}(h_0)(T)) \frac{\tilde{\alpha} + 1}{\eta}$ and
$\phi^k = 0$ for $k \neq j$. Furthermore, let $\psi := 0$ and $\tau
:= \bigwedge_{i=1}^6 \tau_i$. Taking expectation in
(\ref{make-contradiction}) yields $\mathbb{E}[f_{\tau}(T) Z_{\tau}]
< 0$, implying $\mathbb{P}(f_{\tau}(T) < 0) > 0$, which contradicts
(\ref{pos-contr}). This proves (\ref{cond-sigma-pre}).

Now suppose $\int_E \gamma(h_0,x)(T) F(dx) = \infty$. Using
Assumption \ref{ass-nec}, relation (\ref{observation}) and the
$\sigma$-finiteness of $F$, there exist $B \in \mathcal{E}$ with
$F(B) < \infty$ and a strictly positive stopping time $\tau_6 \leq
T$ such that
\begin{align*}
-\frac{1}{2} \int_{B} U_{-(t \wedge \tau_6)} \ell \gamma(\pi U_{t
\wedge \tau_6} f_{(t \wedge \tau_6)-},x)(T) F(dx) \leq
-(\tilde{\alpha}+1), \quad t \geq 0.
\end{align*}
Let $\phi := 0$, $\psi := -\frac{1}{2} \mathbbm{1}_{B}$ and $\tau :=
\bigwedge_{i=1}^6 \tau_i$. Taking expectation in
(\ref{make-contradiction}) we obtain $\mathbb{E}[f_{\tau}(T)
Z_{\tau}] < 0$, implying $\mathbb{P}(f_{\tau}(T) < 0) > 0$, which
contradicts (\ref{pos-contr}). This yields
(\ref{cond-integrable-pre}).

Since $F$ is $\sigma$-finite, there exists a sequence $(B_n)_{n \in
\mathbb{N}} \subset \mathcal{E}$ with $B_n \uparrow E$ and $F(B_n) <
\infty$, $n \in \mathbb{N}$. Next, we show for all $n \in
\mathbb{N}$ the relation
\begin{align}\label{cond-drift-approx}
\alpha(h_0)(T) + \int_E \psi_n(x) \gamma(h_0,x)(T) F(dx) \geq 0,
\end{align}
where $\psi_n := - (1 - \frac{1}{n}) \mathbbm{1}_{B_n}$. Suppose, on
the contrary, that (\ref{cond-drift-approx}) is not satisfied for
some $n \in \mathbb{N}$. Using Assumption \ref{ass-nec}, there exist
$\eta > 0$ and a strictly positive stopping time $\tau_6 \leq T$
such that
\begin{align*}
&U_{-(t \wedge \tau_6)} \ell \alpha(\pi U_t f_{(t \wedge
\tau_6)-})(T)
\\ &+ \int_E \psi_n(x) U_{-(t \wedge \tau_6)}
\ell \gamma(\pi U_{t \wedge \tau_6} f_{(t \wedge \tau_6)-},x)(T)
F(dx) \leq -\eta, \quad t \geq 0.
\end{align*}
Let $\phi := 0$ and $\tau := \bigwedge_{i=1}^6 \tau_i$. Taking
expectation in (\ref{make-contradiction}) we obtain
$\mathbb{E}[f_{\tau}(T) Z_{\tau}] < 0$, implying
$\mathbb{P}(f_{\tau}(T) < 0) > 0$, which contradicts
(\ref{pos-contr}). This yields (\ref{cond-drift-approx}). By
(\ref{cond-drift-approx}), (\ref{cond-integrable-pre}) and
Lebesgue's theorem, we conclude (\ref{cond-alpha-pre}).
\end{proof}

We shall now present sufficient conditions for positivity preserving
term structure models. In the sequel, we suppose that Assumptions
\ref{ass-1-mild-hom}, \ref{ass-2-mild-hom} are fulfilled, which
ensures existence and uniqueness of solutions by Theorem
\ref{thm-ex-SDE}.

\begin{lemma}\label{lemma-local-global}
Suppose Assumptions \ref{ass-1-mild-hom}, \ref{ass-2-mild-hom} are
fulfilled. If equation (\ref{equation-new}) is locally positivity
preserving and we have (\ref{cond-gamma-pre}), then equation
(\ref{equation-new}) is positivity preserving.
\end{lemma}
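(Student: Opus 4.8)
The plan is to argue by a single exit-time contradiction rather than by explicitly patching infinitely many stopping times together. Define the first exit time from the cone,
\[
\rho := \inf \{ t \geq 0 : r_t \notin P \}, \qquad \inf \emptyset := \infty,
\]
which is a stopping time since $(r_t)_{t \geq 0}$ is adapted and c\`adl\`ag and $P$ is closed. Because $\rho = \infty$ holds exactly on the event $\bigcap_{t \in \mathbb{R}_+} \{ r_t \in P \}$, the positivity preserving property is equivalent to $\mathbb{P}(\rho = \infty) = 1$, and this is what I would establish. First I would record two structural facts. Since (\ref{equation-new}) is locally positivity preserving there is a strictly positive stopping time $\tau$ with $r_{t \wedge \tau} \in P$ for all $t$, whence $\rho \geq \tau > 0$ almost surely. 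Moreover, for $t < \rho$ we have $r_t \in P$, so closedness of $P$ forces the left limit to satisfy $r_{\rho-} \in P$ on $\{ \rho < \infty \}$.

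The first substantive step is to show that jumps cannot push the process out of $P$. At a jump time $s$ of $\mu$ with mark $x$ one has, using $r_{s-} = \pi U_s f_{s-}$ and $\pi \ell = \mathrm{id}$,
\[
\Delta r_s = \pi U_s \Delta f_s = \pi U_s U_{-s} \ell \gamma(r_{s-},x) = \gamma(r_{s-},x).
\]
I would therefore consider the nonnegative integer-valued integral $\int_0^t \int_E \mathbbm{1}_{\{ r_{s-} \in P \}} \mathbbm{1}_{\{ r_{s-} + \gamma(r_{s-},x) \notin P \}} \, \mu(ds,dx)$, whose predictable compensator is $\int_0^t \int_E \mathbbm{1}_{\{ r_{s-} \in P \}} \mathbbm{1}_{\{ r_{s-} + \gamma(r_{s-},x) \notin P \}} F(dx)\, ds$. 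By (\ref{cond-gamma-pre}), for each fixed $h \in P$ the inner $F$-integral vanishes, so the compensator is zero and the integral is almost surely zero. Hence, almost surely, at every jump time $s$ with $r_{s-} \in P$ one has $r_s = r_{s-} + \gamma(r_{s-},x) \in P$. Combining this with $r_{\rho-} \in P$ and the fact that $r_\rho = r_{\rho-}$ when $\rho$ is not a jump time, I obtain $r_\rho \in P$ on $\{ \rho < \infty \}$.

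The concluding step is a restart argument. Because the coefficients of (\ref{equation-new}) do not depend on time, the mild solution $(r_t)_{t \geq 0}$ is, under Assumptions \ref{ass-1-mild-hom} and \ref{ass-2-mild-hom} (which via Theorem \ref{thm-ex-SDE} provide existence and pathwise uniqueness), a time-homogeneous strong Markov process. On $\{ \rho < \infty \}$ the value $r_\rho$ is $\mathcal{F}_\rho$-measurable and lies in $P$, so restarting the equation at $\rho$ yields a solution issued from a $P$-valued initial curve; local positivity then supplies a strictly positive stopping time $\tau'$ with $r_u \in P$ for $u \in [\rho, \rho + \tau']$. This contradicts the definition of $\rho$ as the first exit time, forcing $\mathbb{P}(\rho < \infty) = 0$ and thereby $\mathbb{P}(\rho = \infty) = 1$.

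The delicate point, on which I would spend the most care, is the restart: one must transport the strictly positive stopping time furnished by local positivity along the strong Markov property of $(r_t)_{t \geq 0}$, which is exactly why the autonomy of the HJMM equation (\ref{equation-new}) --- as opposed to the manifestly time-dependent frame equation (\ref{HJM-eqn-new}) --- is essential here. A secondary subtlety is that the $F$-null exceptional set in (\ref{cond-gamma-pre}) is allowed to depend on $h$; this is precisely why I would route the jump analysis through the compensator of the Poisson random measure, converting the ``$F$-almost all $x$'' statement into an almost sure statement about the realized jumps, rather than arguing pathwise mark by mark.
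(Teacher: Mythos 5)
Your overall architecture coincides with the paper's proof: a first exit time from $P$, the observation that (\ref{cond-gamma-pre}) together with closedness of $P$ forces $r_\rho \in P$ on $\{\rho < \infty\}$, and a restart-plus-contradiction. Your treatment of the jump step is correct and in fact \emph{more} detailed than the paper, which asserts $r_{\tau_0} \in P$ in a single line: routing the $h$-dependent $F$-null set in (\ref{cond-gamma-pre}) through the compensator of $\mu$ is exactly the right way to make that line rigorous, and your identity $\Delta r_s = \gamma(r_{s-},x)$ via $\pi \ell = \mathrm{id}$ is fine.

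The gap is in the restart, precisely at the point you flag as delicate. First, $\rho$ is in general unbounded and $r_\rho$ is not known to be square integrable, so you cannot feed it into Theorem \ref{thm-ex-SDE} as an initial datum, and the definition of locally positivity preserving only speaks about initial conditions in $L^2(\Omega,\mathcal{F}_0,\mathbb{P};H_\beta)$; moreover, restarting only on the event $\{\rho < \infty\}$ is not a well-posed initial condition in that framework. The paper resolves both issues at once: assuming $\mathbb{P}(\tau_0 < N) > 0$ for some $N$, it restarts at the \emph{bounded} stopping time $\tau_1 = \tau_0 \wedge N$, for which $\mathbb{E}[\| r_{\tau_1} \|_\beta^2] \leq \mathbb{E}[\sup_{t \in [0,N]} \| r_t \|_\beta^2] < \infty$ by (\ref{solution-in-S2}), and then derives the contradiction on the positive-probability event $\{\tau_0 < N\} \subset \{\tau_0 = \tau_1\}$. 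Second, the strong Markov property you invoke is neither established in the paper nor needed, and it would itself require proof for mild solutions of jump-SPDEs; the operative tool is Lemma \ref{lemma-solution-tau}, which is stated (and proved, via optional stopping of exponential martingales) only for bounded stopping times, and which shows directly that $\tilde{r}_t := r_{\tau_1 + t}$ solves the same equation with respect to the shifted filtration, the shifted $Q$-Wiener process and the shifted Poisson random measure --- whereupon local positivity applies because it quantifies over every solution. Your argument becomes correct once you replace ``restart at $\rho$ via the strong Markov property'' by ``restart at $\rho \wedge N$ via Lemma \ref{lemma-solution-tau}'' and verify the $L^2$-integrability as above; as literally written, the restart step does not go through.
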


\begin{proof}
Let $h_0 \in L^2(\Omega,\mathcal{F}_0,\mathbb{P};H_{\beta})$ be
arbitrary. Moreover, let $(r_t)_{t \geq 0}$ be the mild solution for
(\ref{equation-new}) with $r_0 = h_0$. We define the stopping time
\begin{align}\label{def-tau-0}
\tau_0 := \inf \{ t > 0 : r_t \notin P \}.
\end{align}
By the closedness of $P$ and (\ref{cond-gamma-pre}) we have
$r_{\tau_0} \in P$ on $\{ \tau_0 < \infty \}$. We claim that $\tau_0
= \infty$. Assume, on the contrary, that
\begin{align}\label{pos-probability}
\mathbb{P}(\tau_0 < N) > 0
\end{align}
for some $N \in \mathbb{N}$. Let $\tau_1$ be the bounded stopping
time $\tau_1 := \tau_0 \wedge N$. We define the new filtration
$(\tilde{\mathcal{F}}_t)_{t \geq 0}$, the new $Q$-Wiener process
$\tilde{W}$ and the new Poisson random measure $\tilde{\mu}$ as in
Lemma \ref{lemma-solution-tau}. Note that $r_{\tau_1} \in
L^2(\Omega,\tilde{\mathcal{F}}_0,\mathbb{P};H_{\beta})$, because, by
(\ref{solution-in-S2}), we have
\begin{align*}
\mathbb{E}[\| r_{\tau_1} \|_{\beta}^2] \leq \mathbb{E} \bigg[
\sup_{t \in [0,N]} \| r_t \|_{\beta}^2 \bigg] < \infty.
\end{align*}
By Lemma \ref{lemma-solution-tau}, the
$(\tilde{\mathcal{F}}_t)$-adapted process $\tilde{r}_t := r_{\tau_1
+ t}$ is the unique mild solution for
\begin{align*}
\left\{
\begin{array}{rcl}
d\tilde{r}_t & = & (\frac{d}{d \xi} \tilde{r}_t +
\alpha(\tilde{r}_t)) dt + \sum_j
\sigma^j(\tilde{r}_t)d\tilde{\beta}_t^j + \int_E
\gamma(\tilde{r}_{t-},x)(\tilde{\mu}(dt,dx) - F(dx)dt)
\medskip
\\ \tilde{r}_0 & = & r_{\tau_1}.
\end{array}
\right.
\end{align*}
Since equation (\ref{equation-new}) is locally positivity preserving
and $\mathbb{P}(r_{\tau_1} \in P) = 1$, there exists a strictly
positive stopping time $\tau_2$ such that $\mathbb{P}( \bigcap_{t
\in \mathbb{R}_+} \tilde{r}_{t \wedge \tau_2} \in P) = 1$. Since $\{
\tau_0 < N \} \subset \{ \tau_0 = \tau_1 \}$, we obtain
\begin{align*}
r_{\tau_0 + t} \in P \quad \text{on $[0,\tau_2] \cap \{ \tau_0 <
N\}$,}
\end{align*}
which is a contradiction because of (\ref{pos-probability}) and the
definition (\ref{def-tau-0}) of $\tau_0$. Consequently, we have
$\tau_0 = \infty$.
\end{proof}

\begin{assumption}\label{ass-smooth}
We assume $\sigma \in C^2(H;L_2^0(H))$, and that the vector fields
$$
h \mapsto \sigma^0(h):= \alpha(h) - \frac{1}{2} \sum_{j} D \sigma^j(h) \sigma^j(h), \quad h \mapsto \sigma^j(h)
$$
are globally Lipschitz from $ H $ to $ H $.
\end{assumption}

\begin{lemma}\label{strat-drift_equals_drift}
Suppose Assumption \ref{ass-smooth} and relation
\eqref{cond-sigma-pre} are fulfilled. Then we have
\begin{align*}
\bigl ( \sum_j D \sigma^j(h) \sigma^j(h) \bigr)(\xi) = 0, \text{ for all } \xi \in
(0,\infty), \, h \in
\partial P_{\xi}.
\end{align*}
\end{lemma}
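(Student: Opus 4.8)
The plan is to prove the stronger pointwise statement that each summand already vanishes, i.e. $(D\sigma^j(h)\sigma^j(h))(\xi)=0$ for every $j$; summing then gives the claim, since the point evaluation $e_\xi\colon g\mapsto g(\xi)$ is a continuous linear functional on $H_\beta$ (Theorem \ref{thm-group}(2)) and the series defining the Stratonovich correction converges in $H_\beta$ by Assumption \ref{ass-smooth}. Fix $\xi\in(0,\infty)$, $h\in\partial P_\xi$ and an index $j$, and introduce the scalar field $\Theta_j(g):=\sigma^j(g)(\xi)=e_\xi(\sigma^j(g))$. Because $\sigma\in C^2$ and $e_\xi$ is bounded and linear, $\Theta_j\in C^2(H_\beta;\mathbb R)$ with $D\Theta_j(h)[v]=(D\sigma^j(h)v)(\xi)$; in particular the quantity to be killed is exactly $D\Theta_j(h)[\sigma^j(h)]$. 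The hypothesis \eqref{cond-sigma-pre} says precisely that $\Theta_j\equiv 0$ on the convex set $\partial P_\xi$.

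First I would exploit convexity: for any $g_0\in\partial P_\xi$ the segment $\{(1-s)h+sg_0:s\in[0,1]\}$ lies in $\partial P_\xi$ (as $P$ is convex and $e_\xi$ is linear), so $\Theta_j$ is constantly $0$ along it and differentiation at $s=0$ yields $D\Theta_j(h)[g_0-h]=0$. Hence the bounded functional $D\Theta_j(h)$ annihilates the feasible cone $\{g_0-h:g_0\in\partial P_\xi\}$. Every such direction $v=g_0-h$ satisfies $v(\xi)=0$, and since $g_0\ge0$ with $g_0(\xi)=0$ forces the first-order behaviour at the minimum to vanish, the feasible directions sit inside $W:=\{v\in H_\beta : v(\xi)=0,\ v'(\xi)=0\}$ (at least for $h$ with a regular zero, the only case ultimately needed).

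The crucial step is to upgrade ``$D\Theta_j(h)$ kills the feasible directions'' to ``$D\Theta_j(h)$ is a scalar multiple of $e_\xi$''. The mechanism is the preceding Lemma: since $v\mapsto v'(\xi)$ is unbounded on $H_\beta$, its kernel intersected with $\{v(\xi)=0\}$, namely $W$, is dense in the closed hyperplane $\{v(\xi)=0\}$. I would first treat $h$ with a single nondegenerate zero at $\xi$, so that $h(\zeta)\ge c(\zeta-\xi)^2$ near $\xi$ and $h\ge\delta>0$ away from $\xi$: for such $h$ and any sufficiently regular $v\in W$ one has $h\pm sv\in\partial P_\xi$ for small $s$, whence both directions are feasible and $D\Theta_j(h)[v]=0$; these $v$ being dense in $W$ and $W$ dense in $\{v(\xi)=0\}$, boundedness of $D\Theta_j(h)$ forces $D\Theta_j(h)=\lambda_j e_\xi$. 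Consequently $D\Theta_j(h)[\sigma^j(h)]=\lambda_j\,\sigma^j(h)(\xi)=0$ by \eqref{cond-sigma-pre}. For a general $h\in\partial P_\xi$ I would approximate it in $H_\beta$ by $h_n:=h+\tfrac1n\psi$, where $\psi\in H_\beta$ is fixed with $\psi>0$ off $\xi$, $\psi(\xi)=0$, $\psi''(\xi)>0$ (for instance $\psi(\zeta)=1-e^{-(\zeta-\xi)^2}$), so that each $h_n$ has a unique nondegenerate zero at $\xi$; continuity of $h\mapsto(D\sigma^j(h)\sigma^j(h))(\xi)$, guaranteed by $\sigma\in C^2$ together with Theorem \ref{thm-group}(2), then gives $(D\sigma^j(h)\sigma^j(h))(\xi)=\lim_n(D\sigma^j(h_n)\sigma^j(h_n))(\xi)=0$.

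I expect the main obstacle to be precisely this last upgrade --- controlling the feasible directions well enough to conclude that $D\Theta_j(h)$ is a multiple of $e_\xi$, rather than merely annihilating the codimension-two space $W$. A naive first-order perturbation $h+s\sigma^j(h)$ does \emph{not} stay in $P$ (it typically dips below $0$ on one side of $\xi$ at order $s^2$ whenever $(\sigma^j(h))'(\xi)\neq0$), so one cannot differentiate $\Theta_j$ along it directly; the whole point of routing the argument through the unbounded functional $v\mapsto v'(\xi)$ is that the offending derivative-evaluation component of $D\Theta_j(h)$ is forced to vanish, since otherwise $D\Theta_j(h)$ could not be bounded. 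The reduction to a single nondegenerate zero, followed by the continuity argument, is the device that legitimizes the density step for every $h\in\partial P_\xi$, including those with degenerate or multiple zeros, whose feasible cone is by itself too thin to span $\{v(\xi)=0\}$.
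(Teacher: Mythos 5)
Your proposal is correct, but it takes a genuinely different route from the paper. The paper argues nonlinearly: fixing $j$, it considers the local flow $\operatorname{Fl}_t$ of the Lipschitz vector field $\sigma^j$, invokes Volkmann's invariance theorem \cite{vol:73} for the closed convex cone $P$ (checking the supporting-hyperplane condition via \eqref{cond-sigma-pre}) to get $\operatorname{Fl}_t(P) \subset P$, and shows along the Picard--Lindel\"of iterates that the zero at $\xi$ is preserved, $\operatorname{Fl}_t(h)(\xi) = 0$; hence $t \mapsto \sigma^j(\operatorname{Fl}_t(h))(\xi)$ vanishes identically near $t=0$, and differentiating gives the claim at once for every $h \in \partial P_\xi$, degenerate zeros included. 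You instead stay at the linearized level: you correctly identify that the naive perturbation $h + s\,\sigma^j(h)$ need not remain in $P$ --- exactly the obstruction the paper's flow circumvents --- and you replace the flow by a rigidity argument for the bounded functional $D\Theta_j(h)$: it annihilates the feasible cone, and since the kernel of the unbounded functional $v \mapsto v'(\xi)$ is dense in the hyperplane $\ker e_\xi$, you force $D\Theta_j(h) \in \mathbb{R}\, e_\xi$ and conclude via \eqref{cond-sigma-pre}. This is a clever re-use of the paper's preceding unboundedness lemma, which in the paper serves only to justify working with mild rather than weak solutions; your regularization $h_n = h + \tfrac{1}{n}\psi$ together with continuity of $g \mapsto (D\sigma^j(g)\sigma^j(g))(\xi)$ (available since $\sigma \in C^2$ and $e_\xi$ is bounded) is sound and genuinely needed, as the feasible cone at a degenerate zero is too thin. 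One piece of fine print you should make explicit: $v \in \mathcal{D}(\tfrac{d}{d\xi})$ with $v(\xi) = v'(\xi) = 0$ only yields $|v(\zeta)| = O(|\zeta - \xi|^{3/2})$ by Cauchy--Schwarz, which does \emph{not} beat the quadratic barrier $h_n \gtrsim \tfrac{1}{n}\min(1,(\zeta-\xi)^2)$; so ``sufficiently regular'' must mean locally bounded second derivative near $\xi$ (e.g.\ primitives of compactly supported smooth densities, corrected by a fixed smooth $\psi_0$ with $\psi_0(\xi)=1$, $\psi_0'(\xi)=0$, and by normalized mollifier primitives as in the paper's lemma), and such $v$ are still dense in $\ker e_\xi$, so the argument closes --- but the chain ``dense in $W$, $W$ dense in the hyperplane'' as stated glosses this. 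On the trade-off: the paper's route needs the invariance machinery but no regularization, and proves the stronger statement that each flow preserves $\partial P_\xi$ (conceptually matching its remark that parallel volatilities produce parallel Stratonovich corrections); your route is more elementary --- first-order calculus plus Hahn--Banach-type rigidity --- and isolates the real mechanism, namely that at a boundary point the derivative functional must be proportional to the point evaluation precisely because $e_\xi$ is continuous on $H_\beta$ while $v \mapsto v'(\xi)$ is not.
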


\begin{proof}
It suffices to show $ \bigl( D \sigma^j (h) \sigma^j(h) \bigr) (\xi) = 0 $ for all $ h \in \partial P_{\xi} $ and all $ j $. Therefore let $ j $ be fixed and denote $ \sigma=\sigma^j $. By assumption for all $ h \geq 0 $ with $ h(\xi)=0 $ we have that $ \sigma(h)(\xi) = 0 $. In other words the volatility vector field $ \sigma $ is parallel to the boundary at boundary elements of $ P $. We denote the local flow of the Lipschitz vector field $ \sigma $ by $ \operatorname{Fl} $ being defined on a small time interval $ ]-\epsilon, \epsilon[ $ around time $ 0 $ and a small neighborhood of each element $ h \in P $. We state first that the flow $ \operatorname{Fl} $ leaves the set $ P $ invariant, i.e., $ \operatorname{Fl}_t(h) \geq 0 $ if $ h \geq 0 $, by convexity and closedness of the cone of positive functions due to \cite{vol:73}. Indeed, $ P $ is a closed and convex cone, whose supporting hyperplanes $ l $ (i.e., a linear functional $ l $ is called supporting hyperplane of $ P $ at $ h $ if $ l(P) \geq 0 $ and $ l(h) = 0 $) are given by appropriate positive measures $ \mu $ on $ \mathbb{R}_+ $ via
$$
l(h) = \int_{\mathbb{R}_+} h(\xi) \mu(d \xi),
$$
whence condition (4) from \cite{vol:73} is fulfilled due to \eqref{cond-sigma-pre}. Next we show that even more holds: the solution $ \operatorname{Fl}_t(h) $ evaluated at $ \xi $ vanishes if $ h(\xi) = 0 $, which we show directly. Indeed, let us additionally fix $ h \in \partial P_{\xi} $, i.e., $ h \geq 0 $ and $ h(\xi) = 0 $. Looking now at the Picard-Lindel\"of approximation scheme
$$
c^{(n+1)}(t) = h + \int_0^t \sigma(c^{(n)}(s)) ds
$$
with $ c^{(n)}(0)=h $ and $ c^{(0)}(s) = h $ for $ s,t \in ]-\epsilon, \epsilon[ $ and $ n \geq 0 $, we see by induction that under our assumptions
$$
c^{(n)}(t)(\xi) = 0
$$
for all $ n \geq 0 $ and $ t \in ]-\epsilon,\epsilon[ $ for the given fixed element $ h $. Consequently -- as $ n \to \infty $ -- we obtain that $ \operatorname{Fl}_t(h)(\xi) = 0 $, which is the limit of $ c^{(n)}(t) $. Therefore
$$
(D \sigma(h) \sigma (h)) (\xi) = \frac{d}{ds}|_{s=0} \sigma(\operatorname{Fl}_s(h))(\xi) = 0,
$$
since $ \operatorname{Fl}_t(h) \geq 0 $ by invariance and $ \operatorname{Fl}_t(h) (\xi) = 0 $ by the previous consideration lead to $ \sigma(\operatorname{Fl}_t(h))(\xi)=0 $ for $ t \in ]-\epsilon, \epsilon [$. Notice that we did not need the global Lipschitz property of the Stratonovich correction for the proof of this lemma.
\end{proof}
Before we show sufficiency for the HJMM equation
(\ref{equation-new}) with jumps, we consider the
pure diffusion case. Notice that due to Lemma \ref{strat-drift_equals_drift} the condition (\ref{cond-alpha-pre}) is in fact equivalent to the very same condition formulated with the Stratonovich drift $\sigma^0$ instead of $\alpha$, since the Stratonovich correction vanishes at the boundary of $P$.

In order to treat the pure diffusion case, we apply \cite{Nakayama}, which, by using the support theorem provided in \cite{Nakayama-Support}, offers a general characterization of stochastic invariance of closed sets for SPDEs.

Other results for positivity preserving SPDEs, where, in contrast to our framework, the state space is an $L^2$-space, can be found in \cite{Kotelenez} and \cite{Milian-inf}. The results from \cite{Milian-inf} have been used in \cite{P-Z-paper} in order to derive some positivity results for L\'evy term structure models on $L^2$-spaces.

\begin{proposition}\label{prop-inv-Wiener}
Suppose Assumptions \ref{ass-2-mild-hom}, \ref{ass-smooth} are
fulfilled and $\gamma \equiv 0$. If conditions
(\ref{cond-alpha-pre}), (\ref{cond-sigma-pre}) are satisfied, then
equation (\ref{equation-new}) is positivity preserving.
\end{proposition}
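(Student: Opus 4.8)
The plan is to pass to the Stratonovich formulation of the pure-diffusion equation and then to invoke the stochastic invariance characterization of \cite{Nakayama}, whose proof rests on the support theorem \cite{Nakayama-Support}. Since $\gamma \equiv 0$, equation (\ref{equation-new}) reduces to $dr_t = (\frac{d}{d\xi} r_t + \alpha(r_t))dt + \sum_j \sigma^j(r_t)d\beta_t^j$, and by Assumption \ref{ass-smooth} its Stratonovich form reads $dr_t = (\frac{d}{d\xi} r_t + \sigma^0(r_t))dt + \sum_j \sigma^j(r_t)\circ d\beta_t^j$ with $\sigma^0(h) = \alpha(h) - \frac{1}{2}\sum_j D\sigma^j(h)\sigma^j(h)$. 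The gain of working in Stratonovich form is that invariance is then governed by the first-order geometry of the noise, and the correction term is well-behaved at the boundary of $P$ thanks to Lemma \ref{strat-drift_equals_drift}.

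First I would record the structural properties of the state set. The cone $P$ is closed and convex, and it is invariant under the shift semigroup, since $S_t h = h(t+\cdot) \geq 0$ whenever $h \geq 0$; this compatibility of $P$ with the generator $A = \frac{d}{d\xi}$ is precisely what lets the mild-solution framework of \cite{Nakayama} be applied despite $A$ being unbounded. As in the proof of Lemma \ref{strat-drift_equals_drift}, the supporting hyperplanes of $P$ are exactly the functionals $h \mapsto \int_{\mathbb{R}_+} h(\xi)\mu(d\xi)$ given by nonnegative measures $\mu$, and at a boundary curve $h \in \partial P_{\xi}$ the relevant supporting functional is the point evaluation at $\xi$.

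Next I would check that conditions (\ref{cond-sigma-pre}) and (\ref{cond-alpha-pre}) translate precisely into the tangency and inward-pointing hypotheses of the cited theorem. Condition (\ref{cond-sigma-pre}), namely $\sigma^j(h)(\xi) = 0$ for $h \in \partial P_{\xi}$, says that each volatility is annihilated by the supporting functional, i.e. parallel to the boundary, which is the tangency condition on the diffusion coefficients. For the drift, with $\gamma \equiv 0$ condition (\ref{cond-alpha-pre}) becomes $\alpha(h)(\xi) \geq 0$ for $h \in \partial P_{\xi}$; since by Lemma \ref{strat-drift_equals_drift} the Stratonovich correction $\sum_j D\sigma^j(h)\sigma^j(h)$ vanishes on $\partial P_{\xi}$, this is equivalent to $\sigma^0(h)(\xi) \geq 0$ for $h \in \partial P_{\xi}$, the sub-tangentiality (inward-pointing) condition on the Stratonovich drift. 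The regularity required by the support theorem, namely $\sigma \in C^2$ and global Lipschitz continuity of $\sigma^0$ and the $\sigma^j$, is exactly Assumption \ref{ass-smooth}, while existence and uniqueness of the mild solution are guaranteed by Assumption \ref{ass-2-mild-hom}.

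With these verifications in place, the support theorem of \cite{Nakayama-Support} identifies the support of the law of $(r_t)$ with the closure of the set of solutions of the deterministic controlled equation $\dot{c}_t = \frac{d}{d\xi}c_t + \sigma^0(c_t) + \sum_j \sigma^j(c_t)u_j(t)$ driven by smooth controls $u$; tangency of the $\sigma^j$, valid for controls of either sign, together with the inward-pointing Stratonovich drift and the invariance of $P$ under $S_t$, forces every controlled trajectory starting in $P$ to remain in $P$ by a Nagumo-type argument against the supporting functionals. Hence the law of $(r_t)$ is supported on paths that stay in $P$, which is positivity preservation. I expect the main obstacle to be the non-smoothness of the boundary of $P$: because $P$ is the intersection of the infinitely many half-spaces $H_{\xi}^+$ and the point evaluations do not lie in the domain of $(\frac{d}{d\xi})^*$, there is no single smooth defining function, so the tangency and inward conditions must be verified against the entire family of supporting measures, and one must confirm that the unbounded shift generator is compatible with $P$ in the exact sense demanded by \cite{Nakayama}. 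Matching our geometric conditions to that hypothesis set, and relying on the fact (special to $P$) that parallel volatilities produce parallel Stratonovich corrections, is the delicate part of the argument.
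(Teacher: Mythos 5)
Your skeleton matches the paper's proof---Stratonovich form, Lemma \ref{strat-drift_equals_drift}, and Nakayama's theorems---but the step you defer as ``the delicate part'' is precisely where the paper's proof has its one genuinely non-obvious idea, and your sketch of that step would not work as stated. You propose to show that every trajectory of the controlled equation $\dot c_t = \frac{d}{d\xi}c_t + \sigma^0(c_t) + \sum_j \sigma^j(c_t)u_j(t)$ stays in $P$ ``by a Nagumo-type argument against the supporting functionals.'' For this controlled semilinear PDE that argument is blocked: the supporting functionals of $P$ along $\partial P_\xi$ are point evaluations, and the paper's own preparatory lemma shows that $h \mapsto h'(\xi)$ is unbounded on $\mathcal{D}(\frac{d}{d\xi})$, so point evaluations never lie in $\mathcal{D}((\frac{d}{d\xi})^*)$ and cannot be paired with the equation. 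The paper circumvents this by never testing the PDE at all: it verifies the \emph{semigroup} Nagumo condition (3) of \cite[Prop. 1.1]{Nakayama-Support}, namely $\liminf_{t \downarrow 0}\frac{1}{t}\, d_P\bigl(S_t h + t\sigma^0(h) + t\sigma(h)u\bigr) = 0$ for $h \in P$, $u \in U_0$, and the key observation is that the Euler increment may be replaced, up to $o(t)$, by $S_t\operatorname{Fl}^{\sigma}_t(h)$ where $\sigma := \sigma^0 + \sigma(\cdot)u$ is a vector field containing no unbounded generator. Invariance of $P$ under the ODE flow $\operatorname{Fl}^{\sigma}$ then follows from Volkmann \cite{vol:73}---this is where \eqref{cond-sigma-pre} and \eqref{cond-alpha-pre} enter, the latter transferring to $\sigma^0$ because the Stratonovich correction vanishes on $\partial P_\xi$ by Lemma \ref{strat-drift_equals_drift}---and since $S_t$ evidently preserves $P$, the distance is identically zero. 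This factorization into a semigroup step and a flow step, each separately preserving $P$, is the missing idea; the paper mentions a time-dependent version of \cite{Redheffer-Walter} or \cite{vol:73} only as an alternative route in a remark.

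There is a second, smaller gap: you assert that the regularity demanded by the support theorem is ``exactly Assumption \ref{ass-smooth}.'' It is not: the paper applies Nakayama's theorem to \emph{bounded} vector fields, and therefore begins with a truncation, choosing $\psi_n \in C^{\infty}(H;[0,1])$ with $\psi_n \equiv 1$ on $\overline{B_n(0)}$ and ${\rm supp}(\psi_n) \subset B_{n+1}(0)$, setting $\alpha_n := \psi_n \alpha$, $\sigma^j_n := \psi_n \sigma^j$, and checking that the truncated fields still satisfy Assumptions \ref{ass-2-mild-hom}, \ref{ass-smooth} and conditions \eqref{cond-alpha-pre}, \eqref{cond-sigma-pre}. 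Without this localization (or a substitute) the appeal to \cite{Nakayama} is not justified. Your identification of the geometric content---tangency of the noise via \eqref{cond-sigma-pre}, inward-pointing Stratonovich drift via Lemma \ref{strat-drift_equals_drift}, shift-invariance of $P$---is correct and is exactly what the paper exploits; what is missing are the two technical devices that convert it into a proof.
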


\begin{proof}
First we assume that the vector fields $ \sigma^j $ for $ j \geq 0 $ are bounded in order to apply Nakayama's beautiful support theorem from \cite{Nakayama}. Namely, for $n \in \mathbb{N}$ we choose a function $\psi_n \in C^{\infty}(H;[0,1])$ such that $\psi_n \equiv 1$ on $\overline{B_n(0)}$ and ${\rm supp}(\psi_n) \subset B_{n+1}(0)$ and define the vector fields
\begin{align*}
\alpha_n(h) &:= \psi_n(h) \alpha(h),
\\ \sigma_n^j(h) &:= \psi_n(h) \sigma^j(h), \quad j \geq 1
\\ \sigma_n^0(h) &:= \psi_n(h) \alpha(h) - \frac{1}{2} \sum_j D \sigma_n^j(h) \sigma_n^j(h),
\end{align*}
which again satisfy Assumptions \ref{ass-2-mild-hom}, \ref{ass-smooth} as well as conditions
(\ref{cond-alpha-pre}), (\ref{cond-sigma-pre}).

We therefore show that the semigroup Nagumo's condition (3) from \cite[Prop. 1.1]{Nakayama-Support} is fulfilled due to conditions (\ref{cond-alpha-pre}) and (\ref{cond-sigma-pre}). Introducing the distance $ d_P(h) $ from $ P $ as minimal distance of $ h $ from $ P $, we can formulate Nagumo's condition (3) as
$$
\liminf_{t \downarrow 0} \frac{1}{t} d_P (S_t h + t \sigma^0(h) + t \sigma(h) u ) = 0
$$
for all $ u \in U_0 $ and $ h \in P $. Fix now $ h \in P $ and $ u \in U_0 $ and introduce the abbreviation $ \sigma = \sigma^0 + \sigma(\cdot) u $, then obviously
\begin{align*}
& \| S_t h + t \sigma^0(h) + t \sigma(h) u - S_t \operatorname{Fl}^{\sigma}_t (h) \|_{\beta} = t \bigg\|  \sigma(h) - S_t \frac{\operatorname{Fl}_t^{\sigma}(h) - h}{t} \bigg\|_{\beta},
\end{align*}
which means that
$$
\lim_{t \downarrow 0} \frac{1}{t} \| S_t h + t \sigma^0(h) + t \sigma(h) u  - S_t \operatorname{Fl}^{\sigma}_t (h) \|_{\beta} = 0.
$$
Hence Nagumo's condition can be equivalently formulated as
$$
\liminf_{t \downarrow 0} \frac{1}{t} d_P(S_t \operatorname{Fl}^{\sigma}_t(h)) = 0,
$$
for the particular choice of $ u $ and $ h \in P $. Due to conditions (\ref{cond-alpha-pre}) and (\ref{cond-sigma-pre}) the semiflow $ \operatorname{Fl}^{\sigma} $ leaves $ P $ invariant by \cite{vol:73}, the semigroup $ S_t $ certainly, too, therefore $ d_P(S_t \operatorname{Fl}^{\sigma}_t (h) ) = 0 $ and whence Nagumo's condition is more than satisfied.
\end{proof}

\begin{remark}
Having in mind the method of the moving frame, we could also have shown the Nagumo condition by argueing with time-dependent versions of Section 4 of \cite{Redheffer-Walter} or \cite{vol:73}. The method of the moving frame would work well in this particular case, since the convex set $ \cap_{t \in \mathbb{R}} S_t P $ of functions positive on the whole real line $ \mathbb{R} $ is invariant under the action of the shift $ S $.
\end{remark}

\begin{proposition}\label{prop-pos-general}
Suppose Assumptions \ref{ass-1-mild-hom}, \ref{ass-2-mild-hom},
\ref{ass-smooth} and conditions (\ref{cond-integrable-pre}),
(\ref{cond-alpha-pre}), (\ref{cond-sigma-pre}),
(\ref{cond-gamma-pre}) are fulfilled. Then, equation
(\ref{equation-new}) is positivity preserving.
\end{proposition}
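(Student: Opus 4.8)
The plan is to combine the pure-diffusion positivity result (Proposition \ref{prop-inv-Wiener}) with a controlled approximation that switches the jumps on gradually, and then to bootstrap from local to global positivity via Lemma \ref{lemma-local-global}. The starting observation is that condition (\ref{cond-gamma-pre}) renders every jump harmless: as long as $r_{t-}\in P$, the post-jump value $r_{t-}+\gamma(r_{t-},x)$ again lies in $P$. Hence the whole difficulty is concentrated in the continuous part of the evolution, whose effective (compensated) drift $h\mapsto \alpha(h)-\int_E\gamma(h,x)F(dx)$ is precisely the quantity controlled by (\ref{cond-alpha-pre}).

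First I would reduce to finite jump activity. Since $F$ is $\sigma$-finite, choose sets $B_n\in\mathcal{E}$ with $F(B_n)<\infty$ and $B_n\uparrow E$, and consider the truncated equation obtained from (\ref{equation-new}) by replacing $\gamma$ with $\gamma\mathbbm{1}_{B_n}$. I would check that this truncated equation still satisfies all standing hypotheses: Assumptions \ref{ass-1-mild-hom}, \ref{ass-2-mild-hom}, \ref{ass-smooth} are inherited (truncation to $B_n$ only shrinks the relevant integrals, and $h\mapsto\int_{B_n}\gamma(h,x)F(dx)$ is Lipschitz because $F(B_n)<\infty$), condition (\ref{cond-gamma-pre}) holds trivially, and — the key point — the inward-pointing condition survives truncation. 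Indeed, on $B_n$ the between-jumps drift equals $\alpha(h)-\int_{B_n}\gamma(h,x)F(dx)$, and by the sign observation (\ref{observation}) one has $\gamma(h,x)(\xi)\geq0$ for $h\in\partial P_\xi$, so that discarding part of the compensator can only increase the drift:
\[
\alpha(h)(\xi)-\int_{B_n}\gamma(h,x)(\xi)F(dx)\;\geq\;\alpha(h)(\xi)-\int_{E}\gamma(h,x)(\xi)F(dx)\;\geq\;0,\quad h\in\partial P_\xi .
\]
Condition (\ref{cond-sigma-pre}) is untouched, and by Lemma \ref{strat-drift_equals_drift} this It\^o inward condition is equivalent to the corresponding Stratonovich statement, so all hypotheses of Proposition \ref{prop-inv-Wiener} hold for the truncated between-jumps diffusion.

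Next I would treat the truncated equation, which has only finitely many jumps on every finite interval. Starting from $h_0\in P$, let $\tau$ be its first jump time, which is strictly positive. On $[0,\tau)$ the compensated integral over $B_n$ contributes only its compensator, so the solution coincides with the pure-diffusion SPDE with drift $\alpha-\int_{B_n}\gamma F\,dx$; Proposition \ref{prop-inv-Wiener} then yields $r_t\in P$ for $t<\tau$, and by closedness of $P$ also $r_{\tau-}\in P$, whence $r_\tau=r_{\tau-}+\gamma(r_{\tau-},x)\in P$ by (\ref{cond-gamma-pre}). This shows the truncated equation is locally positivity preserving, and Lemma \ref{lemma-local-global} (restarting at each jump time through the flow property of Lemma \ref{lemma-solution-tau}) upgrades this to full positivity preservation of the truncated equation.

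Finally I would remove the truncation by letting $n\to\infty$. Using the integrability bound (\ref{bounded-gamma-HJM}), the tail $\int_{E\setminus B_n}e^{\Phi(x)}\|\gamma(h,x)\|_{\beta'}^2F(dx)\to0$ uniformly in $h$, so the coefficients of the truncated equations converge to those of (\ref{equation-new}); the Gronwall-type stability estimates underlying the existence theory then give $\mathbb{E}[\sup_{t\le T}\|r_t^{(n)}-r_t\|_\beta^2]\to0$. Passing to an a.s.\ uniformly convergent subsequence and invoking once more the closedness of $P$ yields $r_t\in P$ for the solution of the full equation, as desired. The main obstacle I anticipate is exactly this passage to the limit coupled with the verification that truncation preserves the drift condition: the entire scheme hinges on the sign observation (\ref{observation}), which guarantees that dropping part of the compensator keeps the reduced drift inward-pointing, and on a stability estimate uniform enough to survive the possibly infinite-activity structure of $F$.
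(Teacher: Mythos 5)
Your proposal reproduces the paper's proof essentially step by step: the same truncation $B_n \uparrow E$ with $F(B_n) < \infty$, the same use of the sign observation (\ref{observation}) to check that absorbing the compensator over $B_n$ into the drift keeps it inward-pointing at $\partial P_\xi$, Proposition \ref{prop-inv-Wiener} for the diffusion up to the first jump time of $\mu$ on $B_n$, condition (\ref{cond-gamma-pre}) together with closedness of $P$ at the jump, Lemma \ref{lemma-local-global} (via Lemma \ref{lemma-solution-tau}) to pass from local to global positivity of the truncated equation (\ref{equation-n}), and finally the $L^2$-convergence $\mathbb{E}[\sup_{t \le T} \| r_t - r_t^n \|_{\beta}^2] \rightarrow 0$ combined with closedness of $P$.

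The one flawed step is your justification of this last convergence. You invoke (\ref{bounded-gamma-HJM}), which belongs to Assumption \ref{ass-Lipschitz} of Section \ref{sec-ex-tsm} and is \emph{not} among the hypotheses of this proposition (only Assumptions \ref{ass-1-mild-hom}, \ref{ass-2-mild-hom}, \ref{ass-smooth} are assumed); and even granting it, a uniform bound $\int_E e^{\Phi(x)} \| \gamma(h,x) \|_{\beta'}^2 F(dx) \leq M$ does not imply that the tails $\int_{E \setminus B_n}$ vanish \emph{uniformly in $h$} --- the mass may escape to different regions of $E$ as $h$ varies. Fortunately, uniformity is not needed: in the Gronwall comparison of $r^n$ with $r$, the term coming from the missing jumps is controlled, via the isometry for compensated Poisson integrals, by $\mathbb{E} \int_0^T \int_{E \setminus B_n} \| \gamma(r_s,x) \|^2 F(dx)\, ds$ along the \emph{fixed} limit solution, and this tends to zero by dominated convergence, since Assumptions \ref{ass-1-mild-hom}, \ref{ass-2-mild-hom} yield the linear growth bound $\int_E \| \gamma(h,x) \|^2 F(dx) \leq 2L^2 \| h \|^2 + 2 \int_E \| \gamma(0,x) \|^2 F(dx)$ and $(r_t)$ satisfies (\ref{solution-in-S2}). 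This is precisely the stability result \cite[Prop. 9.1]{SPDE} that the paper cites at this point; with that repair your argument coincides with the published proof.
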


\begin{proof}
Since the measure $F$ is $\sigma$-finite, there exists a sequence
$(B_n)_{n \in \mathbb{N}} \subset \mathcal{E}$ with $B_n \uparrow E$
and $F(B_n) < \infty$ for all $n \in \mathbb{N}$. Let $h_0 \in
L^2(\Omega,\mathcal{F}_0,\mathbb{P};H_{\beta})$ be arbitrary.
Relations (\ref{cond-alpha-pre}), (\ref{observation}),
(\ref{cond-sigma-pre}), Proposition \ref{prop-inv-Wiener} and
(\ref{cond-gamma-pre}) together with the closedness of $P$ yield
that, for each $n \in \mathbb{N}$, the mild solution $(r_t^n)_{t
\geq 0}$ of the stochastic partial differential equation
\begin{align}\label{equation-n}
\left\{
\begin{array}{rcl}
dr_t^n & = & ( \frac{d}{d \xi} r_t^n + \alpha(r_t^n) - \int_{B_n}
\gamma(r_t^n,x) ) dt + \sum_j \sigma^j(r_t^n)d \beta_t^j
\\ && + \int_{B_n} \gamma(r_{t-}^n,x)\mu(dt,dx)
\\ r_0^n & = & h_0
\end{array}
\right.
\end{align}
satisfies $\mathbb{P}(\bigcap_{t \in \mathbb{R}_+} r_{t \wedge
\tau}^n \in P) = 1$, where $\tau$ denotes the strictly positive
stopping time
\begin{align*}
\tau := \inf \{ t > 0 : \mu([0,t] \times B_n) = 1 \}.
\end{align*}
By virtue of Lemma \ref{lemma-local-global}, for each $n \in
\mathbb{N}$ equation (\ref{equation-n}) is positivity preserving.
According to \cite[Prop. 9.1]{SPDE} we have
\begin{align*}
\mathbb{E} \bigg[ \sup_{t \in [0,T]} \| r_t - r_t^n \|_{\beta}^2
\bigg] \rightarrow 0 \quad \text{for all $T \in \mathbb{R}_+$,}
\end{align*}
proving that equation (\ref{equation-new}) is positivity preserving.
\end{proof}

Finally the next theorem states the sufficient conditions under which we can conclude that the solution of equation \eqref{equation-new} is positivity preserving.

\begin{theorem}\label{thm-pos}
Suppose Assumptions \ref{ass-1-mild-hom}, \ref{ass-2-mild-hom},
\ref{ass-smooth} are fulfilled. Then, for each initial curve $h_0
\in L^2(\Omega,\mathcal{F}_0,\mathbb{P};H_{\beta})$ there exists a
unique adapted, c\`adl\`ag, mean square continuous
$\mathcal{H}_{\beta}$-valued solution $(f_t)_{t \geq 0}$ for the HJM
equation (\ref{HJM-eqn-new}) with $f_0 = \ell h_0$ satisfying
(\ref{solution-in-S2-f}), and there exists a unique adapted,
c\`adl\`ag, mean square continuous mild and weak $H_{\beta}$-valued
solution $(r_t)_{t \geq 0}$ for the HJMM equation
(\ref{equation-new}) with $r_0 = h_0$ satisfying
(\ref{solution-in-S2}), which is given by $r_t := \pi U_t f_t$, $t
\geq 0$. Moreover, equation (\ref{equation-new}) is positivity
preserving if and only if we have (\ref{cond-integrable-pre}),
(\ref{cond-alpha-pre}), (\ref{cond-sigma-pre}),
(\ref{cond-gamma-pre}).
\end{theorem}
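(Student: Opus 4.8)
The plan is to assemble this capstone statement from the existence theory of Appendix \ref{app-SDE} and from the two halves of the positivity characterization already in hand, since no new analytic estimate is needed here. First I would settle existence and uniqueness exactly as in the proof of Theorem \ref{thm-ex-hjm}: under Assumptions \ref{ass-1-mild-hom} and \ref{ass-2-mild-hom} the general result Theorem \ref{thm-ex-SDE} applies to the HJM equation (\ref{HJM-eqn-new}) on $\mathcal{H}_\beta$ and produces a unique adapted, c\`adl\`ag, mean square continuous solution $(f_t)_{t\ge0}$ with $f_0=\ell h_0$ obeying (\ref{solution-in-S2-f}). Setting $r_t:=\pi U_t f_t$ and using the commuting diagram (\ref{diagram-commutes}) of Theorem \ref{thm-group}, the process $(r_t)$ is a mild and weak $H_\beta$-valued solution of (\ref{equation-new}) with $r_0=h_0$; the bound (\ref{solution-in-S2}) descends from (\ref{solution-in-S2-f}) via the growth estimate (\ref{est-strong-cont}) for $U_t$ and the boundedness of $\pi$.

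Next I would prove the equivalence by treating the two implications separately. For necessity, I first observe through Remark \ref{remark-continuity} that Assumptions \ref{ass-1-mild-hom} and \ref{ass-2-mild-hom} already force the continuity Assumption \ref{ass-nec}; hence Proposition \ref{prop-pos-general-nec} applies and, if (\ref{equation-new}) is positivity preserving, yields precisely (\ref{cond-integrable-pre})--(\ref{cond-gamma-pre}). For sufficiency, I would invoke Proposition \ref{prop-pos-general}, whose standing hypotheses are exactly Assumptions \ref{ass-1-mild-hom}, \ref{ass-2-mild-hom}, \ref{ass-smooth} together with conditions (\ref{cond-integrable-pre})--(\ref{cond-gamma-pre}), all assumed here; this gives positivity preservation and closes the loop.

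The only real care required is bookkeeping: checking that the standing assumptions of each cited result coincide with those of the theorem. In particular, Assumption \ref{ass-smooth} is not needed for necessity but is indispensable for sufficiency, since it underlies the Stratonovich reformulation of the drift (Lemma \ref{strat-drift_equals_drift}) and the application of Nakayama's support theorem in the pure-diffusion step (Proposition \ref{prop-inv-Wiener}). Accordingly, I do not expect a genuine obstacle at the level of this theorem; its substance lives in the preceding propositions, and the hard work -- the semimartingale contradiction argument for necessity and the ``slowly switching on the jumps'' reduction for sufficiency -- has already been carried out there.
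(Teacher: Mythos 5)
Your proposal is correct and takes essentially the same route as the paper, whose proof is exactly the assembly you describe: Theorem \ref{thm-ex-SDE} for existence and uniqueness, Proposition \ref{prop-pos-general-nec} combined with Remark \ref{remark-continuity} for necessity, and Proposition \ref{prop-pos-general} for sufficiency. Your additional bookkeeping (verifying Assumption \ref{ass-group} via Theorem \ref{thm-group}, and noting that Assumption \ref{ass-smooth} is needed only for the sufficiency direction through Lemma \ref{strat-drift_equals_drift} and Proposition \ref{prop-inv-Wiener}) is accurate and merely makes explicit what the paper's one-line proof leaves implicit.
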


\begin{proof}
The statement follows from Theorem \ref{thm-ex-SDE}, Proposition
\ref{prop-pos-general-nec} (see also Remark \ref{remark-continuity})
and Proposition \ref{prop-pos-general}.
\end{proof}

\begin{remark}\label{remark-pos}
Note that Theorem \ref{thm-pos} is also valid on other state spaces.
The only essential requirement is that the Hilbert space $H$
consists of continuous, real-valued functions on which the point
evaluations are continuous functionals, and that the shift semigroup
extends to a strongly continuous group in the sense of Assumption
\ref{ass-group}.
\end{remark}

\begin{remark}
For the particular situation where equation (\ref{equation-new}) has
no jumps, Theorem \ref{thm-pos} corresponds to the statement of
\cite[Thm. 3]{Milian-inf}, where positivity on weighted $L^2$-spaces
is investigated. Since point evaluations are discontinuous
functionals on $L^2$-spaces, the conditions in \cite{Milian-inf} are
formulated by taking other appropriate functionals.
\end{remark}

We shall now consider the arbitrage free situation. Let $\alpha =
\alpha_{\rm HJM} : H_{\beta} \rightarrow H_{\beta}$ in
(\ref{equation-new}) be defined according to the HJM drift condition
(\ref{def-alpha-HJM-intro}).

\begin{proposition}\label{prop-arbitrage-free}
Conditions (\ref{cond-integrable-pre}), (\ref{cond-alpha-pre}),
(\ref{cond-sigma-pre}), (\ref{cond-gamma-pre}) are satisfied if and
only if we have (\ref{cond-sigma-pre}), (\ref{cond-gamma-pre}) and
\begin{align}\label{cond-gamma-tsm}
\gamma(h,x)(\xi) = 0, \quad \xi \in (0,\infty), \, h \in
\partial P_{\xi} \text{ and $F$-almost all $x \in E$.}
\end{align}
\end{proposition}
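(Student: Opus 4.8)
The plan is to exploit the special structure of the arbitrage-free drift $\alpha_{\rm HJM}$ from \eqref{def-alpha-HJM-intro}: under \eqref{cond-sigma-pre} the quadratic Wiener part of the drift vanishes on $\partial P_{\xi}$, so that the expression controlled by \eqref{cond-alpha-pre} collapses into a single jump integral carrying a definite sign. Since both sides of the asserted equivalence already contain \eqref{cond-sigma-pre} and \eqref{cond-gamma-pre}, I would assume these two throughout and reduce the statement to the following: under \eqref{cond-sigma-pre} and \eqref{cond-gamma-pre}, conditions \eqref{cond-integrable-pre} and \eqref{cond-alpha-pre} together hold if and only if \eqref{cond-gamma-tsm} holds.

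First I would fix $\xi \in (0,\infty)$ and $h \in \partial P_{\xi}$. Because $\sigma^j(h)(\xi) = 0$ for all $j$ by \eqref{cond-sigma-pre}, the first sum in $\alpha_{\rm HJM}(h)(\xi)$ drops out and
\[
\alpha_{\rm HJM}(h)(\xi) = -\int_E \gamma(h,x)(\xi)\bigl( e^{\Gamma(h,x)(\xi)} - 1 \bigr) F(dx),
\]
a finite quantity for every $\xi$, exactly as in the verification that $\alpha_2(H_{\beta}) \subset H_{\beta}^0$, which rests on the bounds \eqref{est-gamma}, \eqref{est-Gamma} and \eqref{bounded-gamma-HJM}. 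The key observation is that whenever $\int_E \gamma(h,x)(\xi) F(dx) < \infty$ one may recombine the two finite integrals to obtain
\[
\alpha_{\rm HJM}(h)(\xi) - \int_E \gamma(h,x)(\xi)\, F(dx) = -\int_E \gamma(h,x)(\xi)\, e^{\Gamma(h,x)(\xi)}\, F(dx).
\]
Moreover, \eqref{cond-gamma-pre} gives, through \eqref{observation}, that $\gamma(h,x)(\xi) \geq 0$ for $F$-almost all $x$, so the integrand $\gamma(h,x)(\xi)\, e^{\Gamma(h,x)(\xi)}$ is $F$-almost everywhere nonnegative.

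For the forward implication I would assume \eqref{cond-integrable-pre} and \eqref{cond-alpha-pre}. Finiteness \eqref{cond-integrable-pre} legitimizes the recombination above, and \eqref{cond-alpha-pre} then says precisely $\int_E \gamma(h,x)(\xi)\, e^{\Gamma(h,x)(\xi)}\, F(dx) \leq 0$. As the integrand is nonnegative, the integral must vanish, which forces $\gamma(h,x)(\xi)\, e^{\Gamma(h,x)(\xi)} = 0$ and hence $\gamma(h,x)(\xi) = 0$ for $F$-almost all $x$, the exponential being strictly positive; this is \eqref{cond-gamma-tsm}. For the converse I would assume \eqref{cond-gamma-tsm}: then $\gamma(h,x)(\xi) = 0$ for $F$-almost all $x$, so $\int_E \gamma(h,x)(\xi)\, F(dx) = 0$, yielding \eqref{cond-integrable-pre}, while the displayed formula for $\alpha_{\rm HJM}(h)(\xi)$ simultaneously vanishes, so that $\alpha_{\rm HJM}(h)(\xi) - \int_E \gamma(h,x)(\xi)\, F(dx) = 0 \geq 0$, yielding \eqref{cond-alpha-pre}.

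The argument is at heart a sign/definiteness observation, so I do not expect a genuine analytic obstacle. The only step requiring care is the integrability bookkeeping that justifies splitting $-\int_E \gamma(h,x)(\xi)\bigl(e^{\Gamma(h,x)(\xi)} - 1\bigr) F(dx)$ into $-\int_E \gamma(h,x)(\xi)\, e^{\Gamma(h,x)(\xi)}\, F(dx) + \int_E \gamma(h,x)(\xi)\, F(dx)$, which I would handle exactly as in the earlier proof that $\alpha_2$ maps into $H_{\beta}^0$, invoking \eqref{est-gamma}, \eqref{est-Gamma} and \eqref{bounded-gamma-HJM}.
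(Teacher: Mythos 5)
Your proposal is correct and follows essentially the same route as the paper's proof: under \eqref{cond-sigma-pre} the Wiener part of $\alpha_{\rm HJM}$ vanishes on $\partial P_{\xi}$, the finiteness \eqref{cond-integrable-pre} permits the recombination $\alpha_{\rm HJM}(h)(\xi) - \int_E \gamma(h,x)(\xi)\,F(dx) = -\int_E \gamma(h,x)(\xi)\,e^{\Gamma(h,x)(\xi)}\,F(dx)$, and the sign information from \eqref{observation} then forces \eqref{cond-gamma-tsm}, exactly as in the paper's reduction to \eqref{observation-2}. Your version merely spells out the integrability bookkeeping that the paper leaves implicit, which is a harmless elaboration rather than a different argument.
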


\begin{proof}
Provided (\ref{cond-sigma-pre}), (\ref{cond-gamma-pre}) are
fulfilled, conditions (\ref{cond-integrable-pre}),
(\ref{cond-alpha-pre}) are satisfied if and only if we have
(\ref{cond-integrable-pre}) and
\begin{align}\label{observation-2}
- \int_E \gamma(h,x)(\xi) e^{\Gamma(h,x)(\xi)} F(dx) \geq 0, \quad
\xi \in (0,\infty), \, h \in \partial P_{\xi}
\end{align}
because the drift $\alpha$ is given by (\ref{def-alpha-HJM-intro}).
By (\ref{observation}), relations (\ref{cond-integrable-pre}),
(\ref{observation-2}) are fulfilled if and only if we have
(\ref{cond-gamma-tsm}).
\end{proof}

Now let, as in Section \ref{sec-ex-tsm}, coefficients $\sigma :
H_{\beta} \rightarrow L_2^0(H_{\beta}^0)$ and $\gamma : H_{\beta}
\times E \rightarrow H_{\beta'}^0$ be given, where $\beta' > \beta$
is a real number.

\begin{theorem}\label{thm-ex-pos}
Suppose Assumption \ref{ass-Lipschitz} is fulfilled, suppose furthermore that $\sigma \in C^2(H;L_2^0(H))$, and that the vector field
$$
h \mapsto - \frac{1}{2} \sum_{j} D \sigma^j(h) \sigma^j(h)
$$
is globally Lipschitz from $ H $ to $ H $. Then, the
statement of Theorem \ref{thm-ex-hjm} is valid, and, in addition,
the HJMM equation (\ref{HJMM-eqn}) is positivity preserving if and
only if we have (\ref{cond-sigma-pre}), (\ref{cond-gamma-pre}),
(\ref{cond-gamma-tsm}).
\end{theorem}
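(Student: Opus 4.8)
The plan is to reduce the theorem to three results already established in the excerpt: Theorem \ref{thm-ex-hjm} for the existence, uniqueness and no-arbitrage assertions, Theorem \ref{thm-pos} for the positivity characterization in terms of the four conditions, and Proposition \ref{prop-arbitrage-free} to rewrite those four conditions as the three stated ones once the drift is the HJM drift. First I would dispose of the existence part: since Assumption \ref{ass-Lipschitz} is assumed verbatim, Theorem \ref{thm-ex-hjm} applies immediately and delivers the $\mathcal{H}_{\beta}$-valued solution $(f_t)_{t\geq 0}$, the mild and weak $H_{\beta}$-valued solution $r_t=\pi U_t f_t$, the stated integrability, and freedom from arbitrage. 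No extra argument is required for this half of the claim.

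The crux of the proof is to verify the hypotheses of Theorem \ref{thm-pos}, namely Assumptions \ref{ass-1-mild-hom}, \ref{ass-2-mild-hom} and \ref{ass-smooth}, for the choice $\alpha=\alpha_{\rm HJM}$. Assumptions \ref{ass-1-mild-hom} and \ref{ass-2-mild-hom} are already known to follow from Assumption \ref{ass-Lipschitz}, as this is precisely what the proof of Theorem \ref{thm-ex-hjm} checks. The genuinely new content is Assumption \ref{ass-smooth}. Its smoothness requirement $\sigma\in C^2(H;L_2^0(H))$ is assumed outright. The volatilities $\sigma^j$ are globally Lipschitz because $\|\sigma^j(h_1)-\sigma^j(h_2)\|_{\beta}^2\leq\sum_j\|\sigma^j(h_1)-\sigma^j(h_2)\|_{\beta}^2=\|\sigma(h_1)-\sigma(h_2)\|_{L_2^0(H_{\beta})}^2\leq L^2\|h_1-h_2\|_{\beta}^2$ by (\ref{Lipschitz-sigma-HJM}). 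The decisive point concerns the Stratonovich drift
$$
\sigma^0=\alpha_{\rm HJM}-\frac{1}{2}\sum_j D\sigma^j\sigma^j.
$$
Here $\alpha_{\rm HJM}$ is globally Lipschitz by the Proposition yielding (\ref{Lipschitz-desired}), while the Stratonovich correction $-\frac{1}{2}\sum_j D\sigma^j\sigma^j$ is globally Lipschitz by the additional hypothesis of the present theorem; being a sum of two globally Lipschitz maps, $\sigma^0$ is globally Lipschitz, so Assumption \ref{ass-smooth} is satisfied.

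With all hypotheses in place, Theorem \ref{thm-pos} asserts that equation (\ref{equation-new}) with $\alpha=\alpha_{\rm HJM}$, which is exactly the HJMM equation (\ref{HJMM-eqn}), is positivity preserving if and only if conditions (\ref{cond-integrable-pre}), (\ref{cond-alpha-pre}), (\ref{cond-sigma-pre}), (\ref{cond-gamma-pre}) hold. To finish, I would invoke Proposition \ref{prop-arbitrage-free}: since $\alpha$ is now the HJM drift (\ref{def-alpha-HJM-intro}), that proposition shows this quadruple of conditions is equivalent to (\ref{cond-sigma-pre}), (\ref{cond-gamma-pre}) together with (\ref{cond-gamma-tsm}). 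Chaining the two equivalences yields the stated characterization. I expect the main obstacle to be precisely the verification of Assumption \ref{ass-smooth}, that is, recognizing that the only hypothesis needed beyond Assumption \ref{ass-Lipschitz} is the global Lipschitz continuity of the Stratonovich correction, since this is exactly what upgrades the Lipschitz drift $\alpha_{\rm HJM}$ into a Lipschitz Stratonovich drift $\sigma^0$ and thereby makes Nakayama's support-theoretic machinery behind Theorem \ref{thm-pos} applicable in the present arbitrage-free setting.
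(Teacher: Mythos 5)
Your proposal is correct and follows exactly the paper's own route: the published proof is a one-line citation of Theorem \ref{thm-ex-hjm}, Theorem \ref{thm-pos} and Proposition \ref{prop-arbitrage-free}, and your expanded verification (Assumptions \ref{ass-1-mild-hom}, \ref{ass-2-mild-hom} from Assumption \ref{ass-Lipschitz} as in the proof of Theorem \ref{thm-ex-hjm}, and Assumption \ref{ass-smooth} from the Lipschitz estimate (\ref{Lipschitz-desired}) for $\alpha_{\rm HJM}$ together with the assumed Lipschitz Stratonovich correction) fills in precisely the steps the paper leaves implicit. Nothing is missing.
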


\begin{proof}
The statement follows from Theorem \ref{thm-ex-hjm}, Theorem
\ref{thm-pos} and Proposition \ref{prop-arbitrage-free}.
\end{proof}

Finally, let us consider the L\'evy case, treated at the end of
Section \ref{sec-ex-tsm}. In this framework, the following statement
is valid.

\begin{proposition}\label{prop-pos-Levy}
Conditions (\ref{cond-sigma-pre}), (\ref{cond-gamma-pre}) and
(\ref{cond-gamma-tsm}) are satisfied if and only if we have
(\ref{cond-sigma-pre}) and
\begin{align}\label{cond-Levy-1}
& h + \delta_k(h) x \in P, \quad h \in P, \, k = 1,\ldots,e \text{
and $F_k$-almost all $x \in \mathbb{R}$}
\\ \label{cond-Levy-2} &\delta_k(h)(\xi) = 0, \quad \xi \in (0,\infty), \, h \in
\partial P_{\xi} \text{ and all $k = 1,\ldots,e$ with $F_k(\mathbb{R}) > 0$.}
\end{align}
\end{proposition}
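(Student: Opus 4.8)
The plan is to exploit the special product structure of the Lévy setting, namely the representation \eqref{Levy-measures} of integrals against $F$ and the explicit form \eqref{def-gamma-Levy} of the jump coefficient $\gamma$, in order to reduce each of the two nontrivial conditions to its coordinatewise counterpart. Condition \eqref{cond-sigma-pre} occurs verbatim on both sides of the claimed equivalence, so nothing has to be shown for it; it therefore suffices to prove \eqref{cond-gamma-pre} $\Leftrightarrow$ \eqref{cond-Levy-1} and \eqref{cond-gamma-tsm} $\Leftrightarrow$ \eqref{cond-Levy-2}.

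The first step I would carry out is the pointwise identity $\gamma(h,x f_k) = \delta_k(h)\, x$, valid for every $k$ and every $x \neq 0$. Indeed, for such $x$ the point $x f_k$ lies in ${\rm span}\{f_k\}$ and in no other coordinate axis, so only the $k$-th summand in \eqref{def-gamma-Levy} survives, contributing $\delta_k(h)\, x$. Since $F_k(\{0\}) = 0$ by \eqref{F-zero-in-zero}, the value at $x = 0$ is $F_k$-negligible and plays no role in any $F_k$-almost-everywhere statement.

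For \eqref{cond-gamma-pre} $\Leftrightarrow$ \eqref{cond-Levy-1} I would fix $h \in P$ and apply \eqref{Levy-measures} to $g(x) = \mathbbm{1}_{\{ h + \gamma(h,x) \notin P \}}$, which, using the identity of the previous step, gives
\[
F(\{ x \in E : h + \gamma(h,x) \notin P \}) = \sum_{k=1}^e F_k(\{ x \in \mathbb{R} : h + \delta_k(h)\, x \notin P \}).
\]
The left-hand side vanishes, which is exactly \eqref{cond-gamma-pre}, if and only if each nonnegative summand on the right vanishes, which is precisely \eqref{cond-Levy-1}; both directions follow at once from the nonnegativity of the summands.

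Finally, for \eqref{cond-gamma-tsm} $\Leftrightarrow$ \eqref{cond-Levy-2} I would fix $\xi \in (0,\infty)$ and $h \in \partial P_{\xi}$ and apply \eqref{Levy-measures} to $g(x) = \mathbbm{1}_{\{ \gamma(h,x)(\xi) \neq 0 \}}$. Since $\gamma(h, x f_k)(\xi) = \delta_k(h)(\xi)\, x$, the set $\{ x : \delta_k(h)(\xi)\, x \neq 0 \}$ equals $\mathbb{R} \setminus \{0\}$ when $\delta_k(h)(\xi) \neq 0$ and is empty otherwise, whence
\[
F(\{ x : \gamma(h,x)(\xi) \neq 0 \}) = \sum_{k \,:\, \delta_k(h)(\xi) \neq 0} F_k(\mathbb{R}).
\]
This sum vanishes if and only if $\delta_k(h)(\xi) = 0$ for every index $k$ with $F_k(\mathbb{R}) > 0$, while indices with $F_k(\mathbb{R}) = 0$ impose no constraint. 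Matching this with the $F$-almost-everywhere statement \eqref{cond-gamma-tsm} yields the equivalence, and combining the two reductions proves the proposition. The main point requiring genuine care is precisely this last step: the qualifier ``$F_k(\mathbb{R}) > 0$'' in \eqref{cond-Levy-2} is forced by the need to discard the degenerate axes $k$ on which $F_k$ is the zero measure, and getting this bookkeeping right in both directions is the only place where the argument is more than a direct substitution.
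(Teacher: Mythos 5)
Your proposal is correct and is essentially the paper's own argument: the paper's proof consists of the single remark that the claim follows from the definitions \eqref{def-Levy-measure} of $F$ and \eqref{def-gamma-Levy} of $\gamma$, and your write-up simply makes that reduction explicit via the identity $\gamma(h,xf_k)=\delta_k(h)\,x$ for $x\neq 0$ together with \eqref{Levy-measures}. The details you supply — using \eqref{F-zero-in-zero} to discard $x=0$ and tracking the qualifier $F_k(\mathbb{R})>0$ in \eqref{cond-Levy-2} — are exactly the bookkeeping the paper leaves to the reader, and you have it right.
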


\begin{proof}
The claim follows from the definition (\ref{def-Levy-measure}) of
$F$ and the definition (\ref{def-gamma-Levy}) of $\gamma$.
\end{proof}

\begin{corollary}\label{cor-ex-pos}
Suppose Assumption \ref{ass-cor} is fulfilled, suppose furthermore that $\sigma \in C^2(H;L_2^0(H))$, and that the vector field
$$
h \mapsto  - \frac{1}{2} \sum_{j} D \sigma^j(h) \sigma^j(h),
$$
is globally Lipschitz from $ H $ to $ H $. Then, the statement
of Corollary \ref{cor-ex-HJM} is valid, and, in addition, the HJMM
equation (\ref{HJMM-eqn}) is positivity preserving if and only if we
have (\ref{cond-sigma-pre}), (\ref{cond-Levy-1}),
(\ref{cond-Levy-2}).
\end{corollary}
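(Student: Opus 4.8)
The plan is to obtain the corollary purely by chaining three results already established for the Lévy framework: the existence and uniqueness statement of Corollary \ref{cor-ex-HJM}, the positivity characterization of Theorem \ref{thm-ex-pos}, and the translation of the abstract jump conditions into conditions on the $\delta_k$ provided by Proposition \ref{prop-pos-Levy}. The entire argument is a matter of checking that all hypotheses of Theorem \ref{thm-ex-pos} are in force under Assumption \ref{ass-cor} together with the two regularity hypotheses stated here, and then reading off the conclusions.

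First I would verify the hypotheses. The proof of Corollary \ref{cor-ex-HJM} already shows that Assumption \ref{ass-cor}, via the Lévy measure structure \eqref{def-Levy-measure} and the definition \eqref{def-gamma-Levy} of $\gamma$, implies Assumption \ref{ass-Lipschitz}. The two remaining requirements of Theorem \ref{thm-ex-pos}, namely $\sigma \in C^2(H;L_2^0(H))$ and global Lipschitz continuity of $h \mapsto -\frac{1}{2}\sum_j D\sigma^j(h)\sigma^j(h)$, are assumed verbatim in the statement of the corollary. Hence Theorem \ref{thm-ex-pos} applies directly. Invoking it yields, on the one hand, the existence and uniqueness assertion, which is exactly the content reproduced in Corollary \ref{cor-ex-HJM}, and, on the other hand, the equivalence of the positivity preserving property with conditions \eqref{cond-sigma-pre}, \eqref{cond-gamma-pre}, \eqref{cond-gamma-tsm}. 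Finally, Proposition \ref{prop-pos-Levy} shows that in the present Lévy setting these three conditions are equivalent to \eqref{cond-sigma-pre}, \eqref{cond-Levy-1}, \eqref{cond-Levy-2}, which closes the argument.

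The main (and essentially only) obstacle is the bookkeeping in this hypothesis check: one must confirm that Assumption \ref{ass-smooth}, as required by Theorem \ref{thm-ex-pos}, is genuinely satisfied here. In particular, the global Lipschitz property of the Stratonovich drift $\sigma^0(h) = \alpha_{\rm HJM}(h) - \frac{1}{2}\sum_j D\sigma^j(h)\sigma^j(h)$ is obtained by combining the Lipschitz estimate \eqref{Lipschitz-desired} for $\alpha_{\rm HJM}$ with the assumed Lipschitz continuity of the correction term, while Lipschitz continuity of each $\sigma^j$ follows from \eqref{Lipschitz-sigma-HJM}. No further analytic estimate is needed; the depth of the result resides entirely in Theorem \ref{thm-ex-pos} and Proposition \ref{prop-pos-Levy}, which we merely specialize.
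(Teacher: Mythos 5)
Your proposal is correct and follows exactly the paper's route: the paper proves this corollary by citing Theorem \ref{thm-ex-pos} and Proposition \ref{prop-pos-Levy}, with the implication Assumption \ref{ass-cor} $\Rightarrow$ Assumption \ref{ass-Lipschitz} having already been established in the proof of Corollary \ref{cor-ex-HJM}, precisely the bookkeeping you spell out. Your additional remarks on the Lipschitz property of the Stratonovich drift merely make explicit what the hypotheses of Theorem \ref{thm-ex-pos} already encode, so there is no substantive difference.
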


\begin{proof}
The assertion follows from Theorem \ref{thm-ex-pos} and Proposition
\ref{prop-pos-Levy}.
\end{proof}

Our above results on arbitrage free, positivity preserving term
structure models apply in particular for local state dependent
volatilities. The following two results are obvious.

\begin{proposition}
Suppose for all $j$ there are $\tilde{\sigma}^j : \mathbb{R}_+
\times \mathbb{R} \rightarrow \mathbb{R}$, and $\tilde{\gamma} :
\mathbb{R}_+ \times \mathbb{R} \times E \rightarrow \mathbb{R}$ such
that
\begin{align}\label{sigma-state}
\sigma^j(h)(\xi) &= \tilde{\sigma}^j(\xi,h(\xi)), \quad (h,\xi) \in
H_{\beta} \times \mathbb{R}_+, \quad \text{for all $j$}
\\ \gamma(h,x)(\xi) &= \tilde{\gamma}(\xi,h(\xi),x), \quad (h,x,\xi) \in
H_{\beta} \times E \times \mathbb{R}_+.
\end{align}
Then, conditions (\ref{cond-sigma-pre}), (\ref{cond-gamma-pre}),
(\ref{cond-gamma-tsm}) are fulfilled if and only if
\begin{align}\label{sigma-local}
&\tilde{\sigma}^j(\xi,0) = 0, \quad \xi \in (0,\infty), \quad
\text{for all $j$}
\\ &y + \tilde{\gamma}(\xi,y,x) \geq 0, \quad \xi \in (0,\infty), \, y \in \mathbb{R}_+
\text{ and $F$-almost all $x \in E$}
\\ &\tilde{\gamma}(\xi,0,x) = 0, \quad \xi \in (0,\infty) \text{ and $F$-almost all $x \in E$.}
\end{align}
\end{proposition}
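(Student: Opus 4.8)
The plan is to exploit that the local state dependence collapses each of the infinite-dimensional boundary conditions \eqref{cond-sigma-pre}, \eqref{cond-gamma-pre}, \eqref{cond-gamma-tsm} into a pointwise condition at each fixed maturity $\xi$. The only structural facts I would need are that every element of $H_{\beta}$ is continuous (Theorem \ref{thm-group}(4)) and that $H_{\beta}$ contains all constant functions, since a constant $h \equiv c$ has vanishing derivative and hence $\| h \|_{\beta} = |c| < \infty$. These let me realize, for any prescribed $\xi \in (0,\infty)$ and any value $y \geq 0$, a test curve $h \in P$ with $h(\xi) = y$ (take $h \equiv y$), and in particular a curve in $\partial P_{\xi}$ (take $h \equiv 0$). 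I would prove the forward implication by substituting such test curves and reading off the pointwise statements, and the converse by substituting the representations \eqref{sigma-state}, treating the three conditions separately and then combining the resulting equivalences.

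First I would dispose of the two equivalences involving $\sigma^j$ and the jump coefficient at the boundary, which are immediate. For any $\xi \in (0,\infty)$ and any $h \in \partial P_{\xi}$ one has $h(\xi) = 0$, so by \eqref{sigma-state} the value $\sigma^j(h)(\xi) = \tilde{\sigma}^j(\xi,h(\xi)) = \tilde{\sigma}^j(\xi,0)$ is independent of the particular $h$; hence \eqref{cond-sigma-pre} holds if and only if $\tilde{\sigma}^j(\xi,0) = 0$ for all $\xi \in (0,\infty)$ and all $j$. The identical argument, now reading $\gamma(h,x)(\xi) = \tilde{\gamma}(\xi,h(\xi),x) = \tilde{\gamma}(\xi,0,x)$, shows that \eqref{cond-gamma-tsm} is equivalent to $\tilde{\gamma}(\xi,0,x) = 0$ for $\xi \in (0,\infty)$ and $F$-almost all $x$. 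In both directions the bookkeeping of the $F$-null sets is trivial here, because the exceptional set already carries the same $\xi$-dependence on both sides of the equivalence.

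The remaining equivalence, between \eqref{cond-gamma-pre} and the inequality $y + \tilde{\gamma}(\xi,y,x) \geq 0$, is where the only genuine care is required. For the direction from the pointwise inequality to \eqref{cond-gamma-pre} I would fix $h \in P$ and evaluate $(h + \gamma(h,x))(\xi) = h(\xi) + \tilde{\gamma}(\xi,h(\xi),x)$, applying the hypothesis with $y := h(\xi) \geq 0$; conversely, I would feed the constant curves $h \equiv y$ into \eqref{cond-gamma-pre}, which forces $y + \tilde{\gamma}(\xi,y,x) \geq 0$ simultaneously at every $\xi$. The main obstacle is purely measure-theoretic: the pointwise inequality comes with an $F$-null exceptional set indexed by the pair $(\xi,y)$, whereas \eqref{cond-gamma-pre} demands a single null set, valid for a fixed $h$ and uniform in the maturity variable. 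I would resolve this by checking $h(\eta) + \tilde{\gamma}(\eta,h(\eta),x) \geq 0$ only for rational $\eta \in (0,\infty)$, so that the relevant exceptional set is a countable union of null sets and hence still null, and then extending the inequality to all $\eta \geq 0$ by continuity of $\eta \mapsto (h + \gamma(h,x))(\eta)$, which is legitimate since $h + \gamma(h,x) \in H_{\beta}$ is continuous by Theorem \ref{thm-group}(4). Assembling the three component equivalences yields the claim.
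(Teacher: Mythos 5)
Your proposal is correct. Note that the paper offers no proof to compare against: it introduces this proposition with the remark that ``the following two results are obvious'' and leaves the verification to the reader, so your write-up is in effect supplying the intended argument rather than an alternative one. The structure you use is exactly the natural one: evaluation at a fixed maturity collapses each condition to a pointwise statement, the zero function and the constant curves $h \equiv y$ (which lie in $H_{\beta}$ since their derivative vanishes) serve as test elements in $\partial P_{\xi}$ and $P$ respectively, and the independence of $\sigma^j(h)(\xi) = \tilde{\sigma}^j(\xi,0)$ and $\gamma(h,x)(\xi) = \tilde{\gamma}(\xi,0,x)$ from the particular boundary curve $h$ makes the first and third equivalences immediate. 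You also correctly identified the one point the word ``obvious'' glosses over: in the implication from the pointwise inequality $y + \tilde{\gamma}(\xi,y,x) \geq 0$ to \eqref{cond-gamma-pre}, the exceptional $F$-null set a priori depends on the pair $(\xi,y)$, while \eqref{cond-gamma-pre} requires a single null set per $h \in P$ valid across all maturities. Your resolution --- take the countable union of null sets over rational maturities $\eta$, then extend the inequality $h(\eta) + \tilde{\gamma}(\eta,h(\eta),x) \geq 0$ to all $\eta \geq 0$ using the continuity of $h + \gamma(h,x) \in H_{\beta}$ guaranteed by Theorem \ref{thm-group}(4) --- is exactly right, and it also covers $\eta = 0$, consistent with the paper's observation that $P = \bigcap_{\xi \in (0,\infty)} H_{\xi}^+$ by continuity. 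No gaps.
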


L\'evy term structure models with local state dependent volatilities
have been studied in \cite{P-Z-paper} and \cite{Marinelli}. In the
framework of Proposition \ref{prop-pos-Levy} we obtain:

\begin{proposition}\label{prop-pos-Levy-state}
Suppose for all $j$ there are $\tilde{\sigma}^j : \mathbb{R}_+
\times \mathbb{R} \rightarrow \mathbb{R}$, and for all $k =
1,\ldots,e$ there are $\tilde{\delta}_k : \mathbb{R}_+ \times
\mathbb{R} \rightarrow \mathbb{R}$ such that we have
(\ref{sigma-state}) and
\begin{align}
\delta_k(h)(\xi) = \tilde{\delta}_k(\xi,h(\xi)), \quad (h,\xi) \in
H_{\beta} \times \mathbb{R}_+, \quad k = 1,\ldots,e.
\end{align}
Then, conditions (\ref{cond-sigma-pre}), (\ref{cond-Levy-1}),
(\ref{cond-Levy-2}) are fulfilled if and only if we have
(\ref{sigma-local}) and
\begin{align}
& y + \tilde{\delta}_k(\xi,y) x \geq 0, \quad \xi \in (0,\infty), \,
y \in \mathbb{R}_+, \, k = 1,\ldots,e \text{ and $F_k$-almost all $x
\in \mathbb{R}$}
\\ &\tilde{\delta}_k(\xi,0) = 0, \quad \xi
\in (0,\infty) \text{ and all $k = 1,\ldots,e$ with $F_k(\mathbb{R})
> 0$.}
\end{align}
\end{proposition}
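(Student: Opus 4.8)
The plan is to reduce the asserted equivalence to two equivalences already established in the text and then carry out a short measure-theoretic translation exploiting the special structure of the Lévy measure $F$. The key preliminary observation is that, in the present framework, local state dependence of the jump coefficients is inherited by $\gamma$: combining the hypothesis $\delta_k(h)(\xi)=\tilde\delta_k(\xi,h(\xi))$ with the definition (\ref{def-gamma-Levy}) gives
\[
\gamma(h,x)(\xi)=\sum_{k=1}^e \tilde\delta_k(\xi,h(\xi))\,x_k\,\mathbbm{1}_{\mathrm{span}\{f_k\}}(x)=:\tilde\gamma(\xi,h(\xi),x),
\]
so that $\gamma$ is of the local state dependent form required by the preceding proposition, with this explicit $\tilde\gamma$ (and $\tilde\sigma^j$ already given by hypothesis).

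First I would apply Proposition \ref{prop-pos-Levy} to replace conditions (\ref{cond-sigma-pre}), (\ref{cond-Levy-1}), (\ref{cond-Levy-2}) by the equivalent set (\ref{cond-sigma-pre}), (\ref{cond-gamma-pre}), (\ref{cond-gamma-tsm}). Next I would apply the preceding proposition on local state dependent volatilities to rewrite the latter set as the three pointwise conditions $\tilde\sigma^j(\xi,0)=0$, together with $y+\tilde\gamma(\xi,y,x)\geq 0$ and $\tilde\gamma(\xi,0,x)=0$ for all $\xi\in(0,\infty)$, $y\in\mathbb{R}_+$ and $F$-almost all $x$. The condition on $\tilde\sigma^j$ is exactly (\ref{sigma-local}), so it remains only to convert the two conditions on $\tilde\gamma$ into the asserted conditions on the $\tilde\delta_k$.

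This last conversion is the only computational step, and it rests on (\ref{Levy-measures}): the measure $F$ is carried by the coordinate axes, integration against $F$ decouples into integration of each coordinate against $F_k$, and on the $k$-th axis one has $\tilde\gamma(\xi,y,xf_k)=\tilde\delta_k(\xi,y)x$. Hence ``$y+\tilde\gamma(\xi,y,\cdot)\geq 0$ $F$-almost everywhere'' is equivalent to $y+\tilde\delta_k(\xi,y)x\geq 0$ for $F_k$-almost all $x$ and every $k$, which is the first asserted condition. Likewise ``$\tilde\gamma(\xi,0,\cdot)=0$ $F$-almost everywhere'' is equivalent to $\tilde\delta_k(\xi,0)\,x=0$ for $F_k$-almost all $x$ and every $k$; since $F_k(\{0\})=0$, the set $\{x:\tilde\delta_k(\xi,0)x\neq 0\}$ is either empty or equals $\mathbb{R}\setminus\{0\}$ of $F_k$-measure $F_k(\mathbb{R})$, so this reduces precisely to $\tilde\delta_k(\xi,0)=0$ for those $k$ with $F_k(\mathbb{R})>0$, the second asserted condition.

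I expect no genuine difficulty in this argument; the step requiring the most care is the final conversion, namely decoupling the single ``$F$-almost all $x$'' quantifier into the per-axis ``$F_k$-almost all $x$'' quantifiers and using $F_k(\{0\})=0$ to pin down the exact index set $\{k:F_k(\mathbb{R})>0\}$ in the vanishing condition. Should one prefer a self-contained argument bypassing the two cited propositions, each pair $(\xi,y)$ can be realized by the constant curve $h\equiv y\in P$ and each boundary point by $h\equiv 0\in\partial P_\xi$; in that route the one subtlety is that verifying $h+\delta_k(h)x\in P$ for $F_k$-almost all $x$ uniformly in the evaluation point requires reducing the continuum of constraints to a countable dense set of points, which is legitimate because the curves in $H_\beta$ are continuous.
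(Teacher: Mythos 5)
Your proposal is correct and follows exactly the route the paper intends: the paper offers no written proof (it declares both state-dependent propositions ``obvious''), and the intended argument is precisely your chaining of Proposition~\ref{prop-pos-Levy} with the preceding local-state-dependent proposition, followed by the per-axis decoupling of the $F$-null sets via (\ref{Levy-measures}) and $F_k(\{0\})=0$. Your closing remarks on the quantifier order (reducing the continuum of constraints in $\xi$ to a countable dense set via continuity of the curves in $H_{\beta}$) correctly identify the only point of care, so nothing is missing.
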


Section 5 in \cite{P-Z-paper} contains some positivity results for
L\'evy driven term structure models on weighted $L^2$-spaces. Using
Proposition \ref{prop-pos-Levy-state}, we can derive the analogous
statements of \cite[Thm. 4]{P-Z-paper} on our $H_{\beta}$-spaces.

\section{The Brody-Hughston equation: Existence and uniqueness}\label{sec_bh-equation}

Let $ (H,\| \cdot \|) $ now denote a state (Hilbert) space of
continuous, integrable real-valued maps on $ \mathbb{R} $, where the
shift semigroup acts as a strongly continuous group, for instance $
H^1(\mathbb{R}) $. We need one notation for the sake of simplicity:
Let $ \rho $ denote a probability density on $ \mathbb{R}_+ $,
extended by $ 0 $ to the whole real line, and $ \eta \in
L^1(\mathbb{R},\rho(\cdot)du) $.

\begin{assumption}\label{ass_bh}
We assume $\int_E \| \gamma(0,x) \|^2 F(dx) < \infty$ and that there exists a constant $L > 0$ such that
\begin{align}
\| \sigma(h_1) - \sigma(h_2) \|_{L_2^0(H)} &\leq L \| h_1 - h_2 \|,
\\ \bigg( \int_E \| \gamma(h_1,x)
- \gamma(h_2,x) \|^2 F(dx) \bigg)^{\frac{1}{2}} &\leq L \| h_1 - h_2
\|
\end{align}
for all $h_1, h_2 \in H$.

Furthermore we assume for all $ \rho \in H $ that
\begin{align}
\int_0^\infty \sigma(\rho)(u) d u = 0, 
\end{align}
and we assume that
\begin{align}
\sigma(\rho)(\xi) = 0
\end{align}
for all $ \rho \in P $, $ \xi \geq 0 $ and $ \rho(\xi) = 0 $.

For the jump fields we assume that
\begin{align}
& \rho + \gamma(\rho,x) \in P, \, \text{for all $\rho \in P$ and
$F$-almost all $x \in E$,} \\
&  -\int_E \gamma(\rho,x) F(dx)(\xi) \geq 0, \, \text{for all $\xi \in (0,\infty)$, $\rho \in \partial P_{\xi}$,}
\end{align}
and finally that
\begin{align}
\int_0^{\infty} \gamma(\rho,x)(u) du = 0
\end{align}
for all $\rho \in H$ and $F$-almost all $x \in E$.
\end{assumption}

Under theses assumptions we can prove the following theorem:
\begin{theorem}
The following equation, which we call henceforward Brody-Hughston equation,
\begin{align}\label{BH-equation}
\left\{
\begin{array}{rcl}
d \rho_t & = & ( \frac{d}{d\xi} \rho_t + \rho_t (0) \rho_t ) dt +
\sum_{j} \sigma^j(\rho_t) d\beta^j_t
\\ && + \gamma(\rho_{t-},x) (\mu(dt,dx) - F(dx) dt) \medskip
\\ \rho_0 & \in & H,
\end{array}
\right.
\end{align}
has a unique adapted, c\`adl\`ag, mean square continuous mild and
weak solution for all times in $ H $, which leaves the set of
densities invariant. Furthermore
\begin{align*}
P(t,T) = \int_{T-t}^{\infty} \rho(t,u) du
\end{align*}
for $ 0 \leq t \leq T $ defines an arbitrage-free evolution of bond prices, i.e. the discounted bond price processes
\begin{align*}
\exp \bigg( - \int_0^t \rho_s(0) ds \bigg) \int_{T-t}^{\infty}
\rho(t,u) du
\end{align*}
are local martingales for $ 0 \leq t \leq T $.
\end{theorem}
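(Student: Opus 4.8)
The plan is to treat the three assertions—existence/uniqueness, invariance of the set of densities, and absence of arbitrage—in turn, with the invariance property serving as the structural link between the other two.

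First, existence and uniqueness. By Assumption \ref{ass_bh} the maps $\sigma$ and $\gamma$ are globally Lipschitz with linear growth, so the only coefficient obstructing a direct application of Theorem \ref{thm-ex-SDE} is the drift $\alpha(\rho):=\rho(0)\rho$, which is merely locally Lipschitz and of quadratic growth, the point evaluation $\rho\mapsto\rho(0)$ being a bounded functional on $H$ (e.g.\ $H=H^1(\mathbb{R})$). I would truncate: pick cutoffs $\psi_n\in C^\infty(H;[0,1])$ with $\psi_n\equiv 1$ on $\overline{B_n(0)}$ and support in $B_{n+1}(0)$, and set $\alpha_n(\rho):=\psi_n(\rho)\rho(0)\rho$, which is globally Lipschitz and bounded. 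Theorem \ref{thm-ex-SDE} then supplies a unique global mild (hence weak) solution $\rho^n$ of the truncated equation; local uniqueness makes the family consistent on the intervals $[0,\tau_n]$ with $\tau_n:=\inf\{t:\|\rho_t^n\|>n\}$, yielding a maximal local solution up to $\tau:=\lim_n\tau_n$. I expect the main obstacle to be proving $\tau=\infty$, i.e.\ ruling out finite-time explosion driven by the quadratic term $\rho(0)\rho$.

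The invariance of the densities is exactly what controls this obstacle, and I would establish it for each truncated equation. For positivity, note that if $h\in\partial P_\xi$ then $\alpha_n(h)(\xi)=\psi_n(h)h(0)h(\xi)=0$, so condition (\ref{cond-alpha-pre}) collapses to the assumed inequality $-\int_E\gamma(h,x)(\xi)\,F(dx)\geq 0$; together with (\ref{cond-sigma-pre}), (\ref{cond-gamma-pre}) and (\ref{cond-integrable-pre}), all part of Assumption \ref{ass_bh}, the positivity result (Theorem \ref{thm-pos}, applicable on this space by Remark \ref{remark-pos}) gives $\rho_t^n\in P$. For the normalization, set $N_t:=\int_0^\infty\rho_t(u)\,du$ and pair the weak equation with the (continuous) total-mass functional: since elements of $H$ vanish at infinity, the generator contributes $\int_0^\infty\frac{d}{d\xi}\rho_t(u)\,du=\rho_t(\infty)-\rho_t(0)=-\rho_t(0)$ and the nonlinear drift contributes $\rho_t(0)N_t$, while the assumptions $\int_0^\infty\sigma(\rho)(u)\,du=0$ and $\int_0^\infty\gamma(\rho,x)(u)\,du=0$ annihilate the stochastic contributions, leaving $dN_t=\rho_t(0)(N_t-1)\,dt$; hence $N_0=1$ forces $N_t\equiv 1$. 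Confined in this way to the affine set of densities (so $\|\rho_t\|_{L^1}=1$ and $\rho_t\geq 0$), the growth of the nonlinear drift is tamed, and combined with the moment bound (\ref{solution-in-S2}) from Theorem \ref{thm-ex-SDE} this is the estimate I would use to conclude $\tau=\infty$ and obtain a global solution leaving the densities invariant.

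Finally, for the no-arbitrage property, put $B_t:=\exp(-\int_0^t\rho_s(0)\,ds)$ and $P_t:=\int_{T-t}^\infty\rho_t(u)\,du=\langle\rho_t,\mathbbm{1}_{[T-t,\infty)}\rangle$, and compute $d(B_tP_t)$ by integration by parts. The functional $P_t$ depends on $t$ both through $\rho_t$ and through the moving endpoint, and the endpoint produces the extra term $\rho_t(T-t)\,dt$ which cancels the boundary term $\int_{T-t}^\infty\frac{d}{d\xi}\rho_t(u)\,du=-\rho_t(T-t)$ coming from the generator; the remaining drift is $\langle\rho_t(0)\rho_t,\mathbbm{1}_{[T-t,\infty)}\rangle\,dt=\rho_t(0)P_t\,dt$, the stochastic part being $\sum_j(\int_{T-t}^\infty\sigma^j(\rho_t)(u)\,du)\,d\beta_t^j$ together with the compensated jump integral. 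Since $dB_t=-\rho_t(0)B_t\,dt$ and $B$ has finite variation, the product rule gives $d(B_tP_t)=B_t\rho_t(0)P_t\,dt-\rho_t(0)B_tP_t\,dt+B_t\,dM_t=B_t\,dM_t$, so $B_tP_t$ is a local martingale on $[0,T]$. The one technical point I would have to justify carefully is that the test function $\mathbbm{1}_{[T-t,\infty)}$ does not belong to $\mathcal{D}((\frac{d}{d\xi})^*)$; I would handle this either by working with the mild solution in the moving frame $\rho_t=\pi U_t f_t$, where bond prices become smooth functionals of $f_t$, or by approximating $\mathbbm{1}_{[T-t,\infty)}$ by admissible test functions and passing to the limit via the continuity estimates of Theorem \ref{thm-group}.
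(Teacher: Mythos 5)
Your proposal follows the same skeleton as the paper's own proof, which consists of just two sentences: a direct appeal to Theorem \ref{thm-pos} (via Remark \ref{remark-pos}) for existence, uniqueness and invariance of the cone $P$, and the observation that the total-mass functional $\rho \mapsto \int_0^{\infty}\rho(u)\,du$ annihilates the right-hand side. Everything you add is material the paper leaves implicit, and most of it is an improvement: your reduction of (\ref{cond-alpha-pre}) at the boundary, using $\alpha(h)(\xi)=h(0)h(\xi)=0$ for $h\in\partial P_{\xi}$, is exactly what makes Assumption \ref{ass_bh} match the hypotheses of Theorem \ref{thm-pos}; your mass dynamics $dN_t=\rho_t(0)(N_t-1)\,dt$ is sharper than the paper's claim that the functional "makes the right hand side vanish" (it vanishes only on the hyperplane $N_t=1$, which is precisely the invariance your ODE argument delivers); and your integration-by-parts computation of $d(B_tP_t)$, with the cancellation between the moving lower endpoint and the boundary term $-\rho_t(T-t)$ from the generator, supplies the no-arbitrage verification that the paper's proof omits entirely. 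You are also right to flag that $\mathbbm{1}_{[T-t,\infty)}$ is not an admissible test function and to propose the moving frame or an approximation argument there.

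The one genuine soft spot in your argument --- shared by the paper, which does not acknowledge it --- is the non-explosion step. You correctly observe that $\rho\mapsto\rho(0)\rho$ is only locally Lipschitz with quadratic growth, so Assumption \ref{ass-2-mild-hom} fails and Theorem \ref{thm-ex-SDE} applies only to your truncated equations; the paper's ``direct application'' silently passes over this. But your claim that confinement to the densities tames the drift is asserted rather than proved, and the obvious estimate does not close: on densities one has $\|\rho\|_{L^{\infty}} \leq C\,\|\rho\|_{L^1}^{1/3}\|\rho'\|_{L^2}^{2/3}$, hence $\rho_t(0) \leq C\|\rho_t\|^{2/3}$ and the drift still grows like $\|\rho_t\|^{5/3}$; the resulting comparison inequality for $\mathbb{E}\|\rho_t\|^2$ is of the type $y' \lesssim y^{4/3}$, which does not exclude finite-time blow-up of the $H$-norm by Gronwall alone. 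Moreover, the bound (\ref{solution-in-S2}) you invoke holds for each truncated equation with a constant a priori depending on $n$, so it cannot be used directly for the limit; a uniform-in-$n$ a priori estimate, exploiting more of the structure of the equation, would be needed. Finally, note that Theorem \ref{thm-pos} also requires Assumption \ref{ass-smooth} ($\sigma\in C^2$ with globally Lipschitz Stratonovich correction), which neither Assumption \ref{ass_bh} contains nor you check --- again an omission inherited from, rather than introduced beyond, the paper's own two-line proof.
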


\begin{proof}
The proof is a direct application of Theorem \ref{thm-pos} (see also
Remark \ref{remark-pos}). The mass of $ \rho_t $ is preserved since applying the linear functional
$$
\rho \mapsto \int_0^{\infty} \rho(u) du
$$
makes the right hand side vanish.
\end{proof}

\begin{remark}
Notice the conceptual simplicity of the Brody-Hughston equation
(\ref{BH-equation}) in contrast to the HJM equation. In particular,
adding jumps to the Brody-Hughston equation is less delicate than in the HJMM case. Remark also that positivity is the crucial issue for the HJMM equation as well
as for the Brody-Hughston equation.
\end{remark}

\begin{remark}
We can define the average of $ \eta $ at $ \rho $
$$
\overline{\eta} = \int_0^{\infty} \eta (u) \rho(u) du.
$$
We suppress in this notation the dependence on $ \rho $, but it
should be clear at every moment, where it appears, which $ \rho $ is
meant. Vector fields, volatilities or jump fields, satisfying Assumptions \ref{ass_bh} can then be chosen of the form
$$
\rho \mapsto (a(\rho) - \overline{(a(\rho)})\rho,
$$
for some $ a:H \to H $ appropriately chosen. In this case, i.e., $ \sigma^j $ and $ \gamma $ chosen of the previous type, one can try to divide the equation by $ \rho $ in order to show positivity directly, which works up to some regularity questions. Our approach chosen here is more general since we do not need to assume that the vector fields factor by $ \rho $.
\end{remark}

\begin{appendix}

\section{Stochastic partial differential equations driven by Wiener process and
Poisson measures}\label{app-SDE}

For convenience of the reader, we provide the crucial results on
stochastic partial differential equations driven by Wiener process
and Poisson measures in this appendix. For this purpose, we follow
\cite{SPDE}, where we understand stochastic partial differential
equations -- in this paper, the HJMM equation (\ref{HJMM-eqn}) -- as
time-dependent transformations of stochastic differential equations
-- in this text, the HJM equation (\ref{HJM-eqn}). Other references
for existence and uniqueness results on stochastic partial
differential equations driven by Wiener process and Poisson measures
are \cite{Ruediger-mild} and \cite{Marinelli-Prevot-Roeckner}.

Let $H$ denote a separable Hilbert space with inner product $\langle
\cdot,\cdot \rangle$ and associated norm $\| \cdot \|$.

Furthermore, let $(S_t)_{t \geq 0}$ be a $C_0$-semigroup on $H$ with
infinitesimal generator $A : \mathcal{D}(A) \subset H \rightarrow
H$. We denote by $A^* : \mathcal{D}(A^*) \subset H \rightarrow H$
the adjoint operator of $A$. Recall that the domains
$\mathcal{D}(A)$ and $\mathcal{D}(A^*)$ are dense in $H$, see, e.g.,
\cite[Satz VII.4.6, p. 351]{Werner}.

Let $(\Omega,\mathcal{F},(\mathcal{F}_t)_{t \geq 0},\mathbb{P})$ be
a filtered probability space satisfying the usual conditions.

Let $U$ be another separable Hilbert space. Whenever there is no ambiguity possible, we also denote by $\langle \cdot,\cdot \rangle$ its inner product, and by $\| \cdot \|$ its associated norm. Let $Q \in L(U)$ be a compact,
self-adjoint, strictly positive linear operator. Then there exist an
orthonormal basis $\{ e_j \}$ of $U$ and a bounded sequence
$\lambda_j$ of strictly positive real numbers such that
\begin{align*}
Qu = \sum_j \lambda_j \langle u,e_j \rangle e_j, \quad u \in U
\end{align*}
namely, the $\lambda_j$ are the eigenvalues of $Q$, and each $e_j$
is an eigenvector corresponding to $\lambda_j$, see, e.g.,
\cite[Thm. VI.3.2]{Werner}.

The space $U_0 := Q^{\frac{1}{2}}(U)$, equipped with inner product
$\langle u,v \rangle_{U_0} := \langle Q^{-\frac{1}{2}} u,
Q^{-\frac{1}{2}} v \rangle_U$, is another separable Hilbert space
and $\{ \sqrt{\lambda_j} e_j \}$ is an orthonormal basis.

Let $W$ be a $Q$-Wiener process \cite[p. 86,87]{Da_Prato}. We assume
that ${\rm Tr} \, Q = \sum_j \lambda_j < \infty$. Otherwise, which
is the case if $W$ is a cylindrical Wiener process, there always
exists a separable Hilbert space $U_1 \supset U$ on which $W$ has a
realization as a finite trace class Wiener process, see \cite[Chap.
4.3]{Da_Prato}.

We denote by $L_2^0(H) := L_2(U_0,H)$ the space of Hilbert-Schmidt
operators from $U_0$ into $H$, which, endowed with the
Hilbert-Schmidt norm
\begin{align*}
\| \Phi \|_{L_2^0(H)} := \sqrt{\sum_j \lambda_j \| \Phi e_j \|^2},
\quad \Phi \in L_2^0(H)
\end{align*}
itself is a separable Hilbert space.

According to \cite[Prop. 4.1]{Da_Prato}, the sequence of stochastic
processes $\{ \beta^j \}$ defined as $\beta^j :=
\frac{1}{\sqrt{\lambda_j}} \langle W, e_j \rangle$ is a sequence of
real-valued independent $(\mathcal{F}_t)$-Brownian motions and we
have the expansion
\begin{align}\label{Wiener-expansion}
W = \sum_j \sqrt{\lambda _j} \beta^j e_j,
\end{align}
where the series is convergent in the space $M^2(U)$ of $U$-valued
square-integrable martingales. Let $\Phi : \Omega \times
\mathbb{R}_+ \rightarrow L_2^0(H)$ be an integrable process, i.e.
$\Phi$ is predictable and satisfies
\begin{align*}
\mathbb{P} \bigg( \int_0^T \| \Phi_t \|_{L_2^0(H)}^2 dt < \infty
\bigg) = 1 \quad \text{for all $T \in \mathbb{R}_+$.}
\end{align*}
Setting $\Phi^j := \sqrt{\lambda_j} \Phi e_j$ for each $j$, we have
\begin{align}\label{Wiener-int-expansion}
\int_0^t \Phi_s dW_s = \sum_j \int_0^t \Phi_s^j d\beta_s^j, \quad t
\in \mathbb{R}_+
\end{align}
where the convergence is uniformly on compact time intervals in
probability, see \cite[Thm. 4.3]{Da_Prato}.

Let $(E,\mathcal{E})$ be a measurable space which we assume to be a
\textit{Blackwell space} (see \cite{Dellacherie,Getoor}). We remark
that every Polish space with its Borel $\sigma$-field is a Blackwell
space.

Furthermore, let $\mu$ be a homogeneous Poisson random measure on
$\mathbb{R}_+ \times E$, see \cite[Def. II.1.20]{Jacod-Shiryaev}.
Then its compensator is of the form $dt \otimes F(dx)$, where $F$ is
a $\sigma$-finite measure on $(E,\mathcal{E})$.

We shall now focus on (semi-linear) stochastic partial differential
equations
\begin{align}\label{equation}
\left\{
\begin{array}{rcl}
dr_t & = & (A r_t + \alpha(r_t))dt + \sum_{j}
\sigma^j(r_t)d\beta_t^j + \int_{E} \gamma(r_{t-},x) (\mu(dt,dx) -
F(dx)dt)
\medskip
\\ r_0 & = & h_0
\end{array}
\right.
\end{align}
on the separable Hilbert space $H$ with coefficients $\alpha : H
\rightarrow H$, $\sigma : H \rightarrow L_2^0(H)$ and $\gamma : H
\times E \rightarrow H$. In (\ref{equation}), we have defined
$\sigma^j : H \rightarrow H$ as $\sigma^j(h) := \sqrt{\lambda_j}
\sigma(h) e_j$ for each $j$. The initial condition is an
$\mathcal{F}_0$-measurable random variable $h_0 : \Omega \rightarrow
H$.

\begin{definition}\label{def-strong}
An adapted, c\`adl\`ag $H$-valued process $(r_t)_{t \geq 0}$ is
called a {\rm strong solution} for (\ref{equation}) with $r_0 = h_0$
if we have $r_t \in \mathcal{D}(A)$, $t \geq 0$, the relation
\begin{align*}
\mathbb{P} \bigg( \int_0^t \bigg( \| A r_s + \alpha(r_s) \| + \|
\sigma(r_s) \|_{L_2^0(H)}^2 + \int_E \| \gamma(r_s,x) \|^2 F(dx)
\bigg) ds < \infty \bigg) = 1
\end{align*}
for all $t \in \mathbb{R}_+$, and
\begin{align*}
r_t &= h_0 + \int_0^t (A r_s + \alpha(r_s))ds + \sum_{j} \int_0^t
\sigma^j(r_s) d\beta_s^j
\\ &\quad + \int_0^t \int_{E} \gamma(r_{s-},x) (\mu(ds,dx) - F(dx)ds), \quad t \geq 0.
\end{align*}
\end{definition}

\begin{definition}\label{def-weak}
An adapted, c\`adl\`ag $H$-valued process $(r_t)_{t \geq 0}$ is
called a {\rm weak solution} for (\ref{equation}) with $r_0 = h_0$
if
\begin{align}\label{int-cond-weak}
\mathbb{P} \bigg( \int_0^t \bigg( \| \alpha(r_s) \| + \| \sigma(r_s)
\|_{L_2^0(H)}^2 + \int_E \| \gamma(r_s,x) \|^2 F(dx) \bigg) ds <
\infty \bigg) = 1
\end{align}
for all $t \in \mathbb{R}_+$, and for all $\zeta \in
\mathcal{D}(A^*)$ we have
\begin{align*}
\langle \zeta,r_t \rangle &= \langle \zeta,h_0 \rangle + \int_0^t (
\langle A^* \zeta, r_s \rangle + \langle \zeta, \alpha(r_s)
\rangle)ds + \sum_{j} \int_0^t \langle \zeta, \sigma^j(r_s) \rangle
d\beta_s^j
\\ &\quad + \int_0^t \int_{E} \langle \zeta, \gamma(r_{s-},x) \rangle
(\mu(ds,dx) - F(dx)ds), \quad t \geq 0.
\end{align*}
\end{definition}

\begin{definition}\label{def-mild}
An adapted, c\`adl\`ag $H$-valued process $(r_t)_{t \geq 0}$ is
called a {\rm mild solution} for (\ref{equation}) with $r_0 = h_0$
if we have (\ref{int-cond-weak}) for all $t \in \mathbb{R}_+$, and
\begin{align*}
r_t &= S_t h_0 + \int_0^t S_{t-s} \alpha(r_s) ds + \sum_{j} \int_0^t
S_{t-s} \sigma^j(r_s) d\beta_s^j
\\ &\quad + \int_0^t \int_{E} S_{t-s} \gamma(r_{s-},x) (\mu(ds,dx) - F(dx)ds), \quad t \geq 0.
\end{align*}
\end{definition}

By convention, \textit{uniqueness} of solutions for (\ref{equation})
is meant up to indistinguishability, that is, for two solutions
$r^1, r^2$ we have $\mathbb{P}(\bigcap_{t \in \mathbb{R}_+} \{ r_t^1
= r_t^2 \}) = 1$.

\begin{assumption}\label{ass-group}
There exist another separable Hilbert space $\mathcal{H}$, a
$C_0$-group $(U_t)_{t \in \mathbb{R}}$ on $\mathcal{H}$ and
continuous linear operators $\ell \in L(H,\mathcal{H})$, $\pi \in
L(\mathcal{H},H)$ such that the diagram
\[ \begin{CD}
\mathcal{H} @>U_t>> \mathcal{H}\\
@AA\ell A @VV\pi V\\
H @>S_t>> H
\end{CD} \]
commutes for every $t \in \mathbb{R}_+$, that is
\begin{align*}
\pi U_t \ell h = S_t h \quad \text{for all $t \in \mathbb{R}_+$ and
$h \in H$.}
\end{align*}
\end{assumption}

\begin{assumption}\label{ass-1-mild-hom}
We assume $\int_E \| \gamma(0,x) \|^2 F(dx) < \infty$.
\end{assumption}

\begin{assumption}\label{ass-2-mild-hom}
We assume there is a constant $L > 0$ such that
\begin{align*}
\| \alpha(h_1) - \alpha(h_2) \| &\leq L \| h_1 - h_2 \|,
\\ \| \sigma(h_1) - \sigma(h_2) \|_{L_2^0(H)} &\leq
L \| h_1 - h_2 \|,
\\ \bigg( \int_E \| \gamma(h_1,x)
- \gamma(h_2,x) \|^2 F(dx) \bigg)^{\frac{1}{2}} &\leq L \| h_1 - h_2
\|
\end{align*}
for all $h_1, h_2 \in H$.
\end{assumption}

\begin{theorem}\label{thm-ex-SDE}
\cite[Thm. 8.6, Cor. 10.6]{SPDE} Suppose that Assumptions
\ref{ass-group}, \ref{ass-1-mild-hom}, \ref{ass-2-mild-hom} are
fulfilled. Then, for each $h_0 \in
L^2(\Omega,\mathcal{F}_0,\mathbb{P};H)$ there exists a unique
c\`adl\`ag, adapted, mean square continuous mild and weak
$\mathcal{H}$-valued solution $(f_t)_{t \geq 0}$ for
\begin{equation}\label{equation-f}
\begin{aligned}
df_t &= U_{-t} \ell \alpha(\pi U_t f_t)dt + \sum_j U_{-t} \ell
\sigma^j(\pi U_t f_t) d\beta_t^j
\\ &\quad + \int_E U_{-t}
\ell \gamma(\pi U_t f_{t-},x) (\mu(dt,dx) - F(dx)dt)
\end{aligned}
\end{equation}
with $f_0 = \ell h_0$ satisfying
\begin{align*}
\mathbb{E} \bigg[ \sup_{t \in [0,T]} \| f_t \|^2 \bigg] < \infty
\quad \text{for all $T \in \mathbb{R}_+$,}
\end{align*}
and there exists a unique c\`adl\`ag, adapted, mean square
continuous mild and weak $H$-valued solution $(r_t)_{t \geq 0}$ for
(\ref{equation}) with $r_0 = h_0$ satisfying
\begin{align*}
\mathbb{E} \bigg[ \sup_{t \in [0,T]} \| r_t \|^2 \bigg] < \infty
\quad \text{for all $T \in \mathbb{R}_+$,}
\end{align*}
which is given by $r_t := \pi U_t f_t$, $t \geq 0$.
\end{theorem}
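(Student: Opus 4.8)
The plan is to establish existence and uniqueness first for the genuine stochastic differential equation (\ref{equation-f}) on the enlarged space $\mathcal{H}$, and then to transport the solution back to $H$ by the moving-frame map $r_t := \pi U_t f_t$, verifying that the result is a mild and weak solution of (\ref{equation}). The decisive simplification is that (\ref{equation-f}) carries no unbounded operator: its drift, diffusion and jump coefficients are the time-dependent but everywhere-defined maps $b_t(f) := U_{-t}\ell\,\alpha(\pi U_t f)$, $\sigma_t^j(f) := U_{-t}\ell\,\sigma^j(\pi U_t f)$ and $\gamma_t(f,x) := U_{-t}\ell\,\gamma(\pi U_t f,x)$, so that standard fixed-point theory for Hilbert-space SDEs applies once the right estimates are in place; in particular the three solution concepts for (\ref{equation-f}) coincide, since no generator term is present.

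First I would record that, since $(U_t)_{t\in\mathbb{R}}$ is a $C_0$-group, there are constants $M\ge 1$ and $\omega\ge 0$ with $\|U_t\|,\|U_{-t}\|\le M e^{\omega t}$, and that $\ell,\pi$ are bounded. Combined with the Lipschitz bounds of Assumption \ref{ass-2-mild-hom}, this yields on every interval $[0,T]$ a uniform Lipschitz constant $L_T := L\,\|\ell\|\,\|\pi\|\,M^2 e^{2\omega T}$ for $b_t,\sigma_t,\gamma_t$ in the variable $f$; Assumption \ref{ass-1-mild-hom} together with the Lipschitz property supplies the matching linear-growth bound, controlling $\int_E\|\gamma_t(0,x)\|^2 F(dx)\le \|U_{-t}\|^2\|\ell\|^2\int_E\|\gamma(0,x)\|^2 F(dx)$. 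I would then run a Picard iteration in the Banach space of adapted càdlàg $\mathcal{H}$-valued processes with norm $(\mathbb{E}[\sup_{t\le T}\|f_t\|^2])^{1/2}$, estimating the Wiener integral by the Burkholder--Davis--Gundy inequality and the compensated Poisson integral by the Itô isometry (legitimate because the compensator is $dt\otimes F(dx)$ with $F$ $\sigma$-finite). Choosing $T$ small enough to make the contraction constant strictly less than one, or equivalently working with an exponentially weighted norm, gives a unique fixed point on $[0,T]$; patching over successive intervals produces the unique global solution together with the moment and mean-square-continuity bound $\mathbb{E}[\sup_{t\le T}\|f_t\|^2]<\infty$.

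Next I would transform back. Fixing $t\ge 0$ and applying the (for fixed $t$) bounded operator $\pi U_t$ to the integral form of $f_t$, I can pull $\pi U_t$ through each Lebesgue and stochastic integral, since bounded operators commute with Hilbert-space stochastic integration; inside the integrals one has $\pi U_t U_{-s}\ell = \pi U_{t-s}\ell = S_{t-s}$ for $0\le s\le t$ by the commuting diagram of Assumption \ref{ass-group}, and $\pi U_t \ell h_0 = S_t h_0$ for the initial term. This converts the identity for $f_t$ into precisely the mild equation of Definition \ref{def-mild} for $r_t=\pi U_t f_t$. To obtain the weak formulation I would test the mild identity against $\zeta\in\mathcal{D}(A^*)$, use $\frac{d}{du}S_u^*\zeta = A^*S_u^*\zeta$ and a stochastic Fubini theorem to rewrite $\langle\zeta,\int_0^t S_{t-s}(\cdots)\rangle$, thereby recovering the $\langle A^*\zeta,r_s\rangle$ term of Definition \ref{def-weak}; this is the customary mild-implies-weak argument for jump-diffusion SPDEs. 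Uniqueness for (\ref{equation}) then follows from uniqueness for (\ref{equation-f}): any mild (equivalently weak) solution $r$ gives, via $f_t:=U_{-t}\ell r_t$ and the same commuting-diagram manipulations, a solution of (\ref{equation-f}), which must coincide with the one already constructed.

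The main obstacle is the back-transformation, specifically the rigorous interchange of the $t$-dependent operator family $\pi U_t$ with the stochastic convolution so as to generate the mild kernel $S_{t-s}$. For each fixed $t$ the operator $\pi U_t$ is bounded and commutes with $\int_0^t(\cdots)$, so the kernel identity $\pi U_t U_{-s}\ell=S_{t-s}$ does the work; but one must still verify that the resulting family $t\mapsto r_t$ is genuinely càdlàg and mean-square continuous, and that the stochastic Fubini invoked in the mild--weak equivalence is licensed for the compensated Poisson integral over the Blackwell mark space. The strong continuity of $(S_t)$ and $(U_t)$ and the moment bound from the fixed-point step are exactly what make these exchanges legitimate; once they are secured, the remaining verifications reduce to the routine BDG and Lipschitz estimates summarised above.
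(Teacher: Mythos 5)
The theorem you are proving is not proved in the paper at all: it is imported verbatim from the companion reference \cite[Thm. 8.6, Cor. 10.6]{SPDE}, and your overall strategy --- a Banach fixed-point argument for the generator-free, time-dependent SDE (\ref{equation-f}) on $\mathcal{H}$ (where strong, mild and weak solutions indeed coincide), followed by the push-forward $r_t := \pi U_t f_t$ using $\pi U_{t-s}\ell = S_{t-s}$ from Assumption \ref{ass-group} --- is exactly the moving-frame route of that reference. The contraction step and the forward transformation are sound as you describe them (modulo routine care: $\|U_t\| \le M e^{\omega |t|}$, predictability of the time-dependent coefficients via strong continuity of the group, and localization to upgrade uniqueness from the square-integrable class to all c\`adl\`ag solutions).

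There is, however, one genuine error: your uniqueness step lifts a mild solution $r$ of (\ref{equation}) to (\ref{equation-f}) via $f_t := U_{-t}\ell r_t$, and this process does \emph{not} solve (\ref{equation-f}). While $\pi U_t f_t = r_t$ does hold (Assumption \ref{ass-group} at $t=0$ gives $\pi\ell = \mathrm{id}_H$), substituting the mild equation for $r_t$ produces integrands $U_{-t}\ell S_{t-s} = U_{-t}\ell\pi U_{t-s}\ell$, and $\ell\pi$ is merely the orthogonal projection onto $\operatorname{ran}\ell$, not the identity on $\mathcal{H}$, so these terms do not collapse to the required $U_{-s}\ell$. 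The failure is visible already with $\alpha = \sigma = \gamma = 0$ on the concrete spaces of Section \ref{sec-space}: there $r_t = S_t h_0$ and the unique solution of (\ref{equation-f}) is the constant path $f_t \equiv \ell h_0$, whereas $(U_{-t}\ell S_t h_0)(\xi)$ equals $h_0(t)$ for $\xi \le t$ and $h_0(\xi)$ for $\xi \ge t$, which differs from $\ell h_0$ unless $h_0$ is constant on $[0,t]$. The correct lifting is not a pointwise composition but the integral formula: given a mild solution $r$, set
\begin{align*}
f_t := \ell h_0 + \int_0^t U_{-s}\ell\,\alpha(r_s)\,ds + \sum_j \int_0^t U_{-s}\ell\,\sigma^j(r_s)\,d\beta_s^j + \int_0^t \int_E U_{-s}\ell\,\gamma(r_{s-},x)\,(\mu(ds,dx) - F(dx)ds);
\end{align*}
applying $\pi U_t$ and the commuting diagram then returns the mild equation, i.e.\ $\pi U_t f_t = r_t$, whence $f$ solves (\ref{equation-f}) with coefficients evaluated at $\pi U_s f_s = r_s$, and SDE uniqueness yields uniqueness of mild solutions. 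Separately, your sketch only covers the mild-implies-weak direction, but uniqueness among \emph{weak} solutions (as asserted in the theorem) also requires weak-implies-mild --- standard, via testing against $S_{t-\cdot}^*\zeta$ for $\zeta \in \mathcal{D}(A^*)$ or a resolvent approximation, but it must be stated, since Definitions \ref{def-weak} and \ref{def-mild} are a priori distinct solution concepts for (\ref{equation}).
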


\begin{remark}\label{remark-solution}
By a solution $(f_t)_{t \geq 0}$ for (\ref{equation-f}) with $f_0 =
\ell h_0$ we precisely mean that
\begin{align*}
&\mathbb{P} \bigg( \int_0^t \bigg( \| U_{-s} \ell \alpha(\pi U_s
f_s) \| + \| U_{-s} \ell \sigma(\pi U_s f_s) \|_{L_2^0(H)}^2
\\ &\quad \quad \quad + \int_E
\| U_{-s} \ell \gamma(\pi U_s f_s,x) \|^2 F(dx) \bigg) ds < \infty
\bigg) = 1
\end{align*}
for all $t \in \mathbb{R}_+$, and we have
\begin{align*}
f_t &= \ell h_0 + \int_0^t U_{-s} \ell \alpha(\pi U_s f_s)ds +
\sum_j \int_0^t U_{-s} \ell \sigma^j(\pi U_s f_s) d\beta_s^j
\\ &\quad + \int_0^t \int_E U_{-s}
\ell \gamma(\pi U_s f_{s-},x) (\mu(ds,dx) - F(dx)ds), \quad t \geq
0.
\end{align*}
\end{remark}

\begin{lemma}\label{lemma-solution-tau}
Let $\tau$ be a bounded stopping time. We define the new filtration
$(\tilde{\mathcal{F}}_t)_{t \geq 0}$ by $\tilde{\mathcal{F}}_t :=
\mathcal{F}_{\tau + t}$, the new $U$-valued process $\tilde{W}$ by
$\tilde{W}_t := W_{\tau + t} - W_{\tau}$ and the new random measure
$\tilde{\mu}$ on $\mathbb{R}_+ \times E$ by $\tilde{\mu}(\omega;B)
:= \mu(\omega;B_{\tau(\omega)})$, $B \in \mathcal{B}(\mathbb{R}_+)
\otimes \mathcal{E}$, where
\begin{align*}
B_{\tau} := \{ (t + \tau,x) \in \mathbb{R}_+ \times E : (t,x) \in B
\}.
\end{align*}
Then $\tilde{W}$ is a $Q$-Wiener process with respect to
$(\tilde{\mathcal{F}}_t)_{t \geq 0}$ and $\tilde{\mu}$ is a
homogeneous Poisson random measure on $\mathbb{R}_+ \times E$ with
respect to $(\tilde{\mathcal{F}}_t)_{t \geq 0}$ having the
compensator $dt \otimes F(dx)$. Moreover, we have the expansion
\begin{align}\label{Wiener-expansion-tau}
\tilde{W} = \sum_j \sqrt{\lambda _j} \tilde{\beta}^j e_j,
\end{align}
where $\tilde{\beta}^j$ defined as $\tilde{\beta}_t^j := \beta_{\tau
+ t}^j - \beta_{\tau}^j$ is a sequence of real-valued independent
$(\tilde{\mathcal{F}}_t)$-Brownian motions. Furthermore, if
$(r_t)_{t \geq 0}$ is a weak solution for (\ref{equation}), then the
$(\tilde{\mathcal{F}}_t)$-adapted process $(\tilde{r}_t)_{t \geq 0}$
defined by $\tilde{r}_t := r_{\tau + t}$ is a weak solution for
\begin{align}\label{SPDE-tau}
\left\{
\begin{array}{rcl}
d\tilde{r}_t & = & (A\tilde{r}_t + \alpha(\tilde{r}_t))dt + \sum_j
\sigma^j(\tilde{r}_t) d\tilde{\beta}_t^j + \int_E
\gamma(\tilde{r}_{t-},x) (\tilde{\mu}(dt,dx) - F(dx) dt)
\medskip
\\ \tilde{r}_0 & = & r_{\tau}.
\end{array}
\right.
\end{align}
\end{lemma}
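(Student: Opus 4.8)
The plan is to identify the driving pair $(W,\mu)$ through its (deterministic, time-homogeneous) characteristics and to exploit that these are invariant under the time shift $s\mapsto\tau+s$, so that after shifting by the bounded stopping time $\tau$ one recovers the same objects relative to the shifted filtration $(\tilde{\mathcal{F}}_t)=(\mathcal{F}_{\tau+t})$. For the Gaussian part I would invoke the strong Markov property of the $Q$-Wiener process: since $W$ is a continuous L\'evy process and $\tau$ is a bounded stopping time, $\tilde{W}_t=W_{\tau+t}-W_\tau$ is again a $Q$-Wiener process with respect to $(\tilde{\mathcal{F}}_t)$, independent of $\mathcal{F}_\tau$ (see \cite{Da_Prato}); alternatively one verifies this through L\'evy's characterization, checking by optional stopping that $\langle\tilde{W},u\rangle$ and $\langle\tilde{W},u\rangle\langle\tilde{W},v\rangle-t\langle Qu,v\rangle$ are $(\tilde{\mathcal{F}}_t)$-martingales for all $u,v\in U$. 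Because $\tilde{\beta}_t^j=\beta_{\tau+t}^j-\beta_\tau^j=\frac{1}{\sqrt{\lambda_j}}\langle\tilde{W}_t,e_j\rangle$, the expansion (\ref{Wiener-expansion-tau}) together with the statement that the $\tilde{\beta}^j$ form a sequence of independent real-valued $(\tilde{\mathcal{F}}_t)$-Brownian motions then follows exactly as in \cite[Prop. 4.1, Thm. 4.3]{Da_Prato}, now applied to $\tilde{W}$.

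For the jump part I would show that $\tilde{\mu}$ is again a homogeneous Poisson random measure with compensator $dt\otimes F(dx)$. The compensator of $\mu$ is the deterministic measure $dt\otimes F(dx)$, which is invariant under the shift $s\mapsto\tau+s$; hence, by optional stopping, the $(\tilde{\mathcal{F}}_t)$-compensator of $\tilde{\mu}$ is again $dt\otimes F(dx)$. Since a quasi-left-continuous integer-valued random measure whose compensator is deterministic is a Poisson random measure (see, e.g., \cite[Thm. II.4.8]{Jacod-Shiryaev}), $\tilde{\mu}$ is a homogeneous Poisson random measure on $\mathbb{R}_+\times E$ with the asserted compensator.

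With both driving objects identified, the last assertion reduces to a time shift of the weak-solution identity. First note that $\tilde{r}_0=r_\tau$ is $\tilde{\mathcal{F}}_0=\mathcal{F}_\tau$-measurable, because $r$ is adapted and c\`adl\`ag; thus it is an admissible initial value. Fixing $\zeta\in\mathcal{D}(A^*)$, I would write the weak formulation of Definition \ref{def-weak} at the two times $\tau+t$ and $\tau$ and subtract, obtaining
\begin{align*}
\langle\zeta,\tilde{r}_t\rangle &= \langle\zeta,r_\tau\rangle + \int_\tau^{\tau+t}(\langle A^*\zeta,r_s\rangle + \langle\zeta,\alpha(r_s)\rangle)\,ds + \sum_j\int_\tau^{\tau+t}\langle\zeta,\sigma^j(r_s)\rangle\,d\beta_s^j
\\ &\quad + \int_\tau^{\tau+t}\int_E\langle\zeta,\gamma(r_{s-},x)\rangle\,(\mu(ds,dx)-F(dx)ds).
\end{align*}
The substitution $s=\tau+u$ turns the Bochner drift integral into $\int_0^t(\langle A^*\zeta,\tilde{r}_u\rangle+\langle\zeta,\alpha(\tilde{r}_u)\rangle)\,du$, while the stochastic integrals started at $\tau$ become the corresponding integrals against $\tilde{\beta}^j$ and $\tilde{\mu}$, i.e.
$$
\int_\tau^{\tau+t}\langle\zeta,\sigma^j(r_s)\rangle\,d\beta_s^j = \int_0^t\langle\zeta,\sigma^j(\tilde{r}_u)\rangle\,d\tilde{\beta}_u^j,
$$
and likewise for the compensated Poisson integral. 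This yields precisely the weak form of (\ref{SPDE-tau}) with $\tilde{r}_0=r_\tau$, and the integrability condition (\ref{int-cond-weak}) for $\tilde{r}$ is immediate from that for $r$ after the same change of variables, using that $\tau$ is bounded.

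I expect the main obstacle to be the rigorous justification of these shift identities for the stochastic integrals, namely that the integral of a predictable process over $(\tau,\tau+t]$ against the original noise coincides with its integral over $(0,t]$ against the shifted noise. The plan is to establish them first for simple integrands, where they hold directly from the definitions of $\tilde{\beta}^j$ and $\tilde{\mu}$ together with the strong Markov property, and then to extend to general predictable integrands by the usual approximation and It\^o-isometry arguments, the boundedness of $\tau$ guaranteeing control of the relevant $L^2$-norms.
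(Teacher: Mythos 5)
Your proposal is correct and follows essentially the same route as the paper's proof: the paper identifies $\tilde{W}$ and $\tilde{\mu}$ by applying the Optional Stopping Theorem to the complex exponential martingales of $W$ and $\mu$ (computing conditional characteristic functions directly), while you reach the same conclusions via the strong Markov property/L\'evy characterization for the Wiener part and the deterministic-compensator characterization of \cite[Thm. II.4.8]{Jacod-Shiryaev} for the jump part --- equivalent devices resting on the same optional-stopping mechanism. Your treatment of the shift identities for the stochastic integrals (first for simple integrands, then extension by approximation/localization) and the subsequent substitution into the weak-solution identity is exactly the paper's argument, so there is no gap.
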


\begin{proof}
Note that $\tilde{W}$ is a continuous
$(\tilde{\mathcal{F}}_t)$-adapted process with $\tilde{W}_0 = 0$,
and $\tilde{\mu}$ is an integer-valued random measure on
$\mathbb{R}_+ \times E$.

We fix $u \in U$. The process
\begin{align*}
M_t := \frac{\exp( i \langle u,W_t \rangle )}{\mathbb{E}[\exp(i
\langle u,W_t \rangle)]}, \quad t \geq 0
\end{align*}
is a complex-valued martingale, because for all $s,t \in
\mathbb{R}_+$ with $s < t$ the random variable $W_t - W_s$ and the
$\sigma$-algebra $\mathcal{F}_s$ are independent. The martingale
$(M_t)_{t \geq 0}$ admits the representation
\begin{align*}
M_t = \exp \bigg( i \langle u,W_t \rangle + \frac{t}{2} \langle Qu,u
\rangle \bigg), \quad t \geq 0.
\end{align*}
According to the Optional Stopping Theorem, the process $(M_{t +
\tau})_{t \geq 0}$ is a nowhere vanishing complex
$(\tilde{\mathcal{F}}_t)$-martingale. Thus, for $s,t \in
\mathbb{R}_+$ with $s<t$ we obtain
\begin{align*}
\mathbb{E} \bigg[ \frac{M_{t + \tau}}{M_{s + \tau}} \, | \,
\tilde{\mathcal{F}}_s \bigg] = 1.
\end{align*}
For each $C \in \tilde{\mathcal{F}}_s$ we get
\begin{align*}
\mathbb{E} [ \mathbbm{1}_C \exp ( i \langle u, \tilde{W}_t -
\tilde{W}_s \rangle ) ] = \mathbb{P}(C) \exp \bigg( - \frac{t-s}{2}
\langle Qu,u \rangle \bigg).
\end{align*}
Hence, the random variable $\tilde{W}_t - \tilde{W}_s$ and the
$\sigma$-algebra $\tilde{\mathcal{F}}_s$ are independent, and
$\tilde{W}_t - \tilde{W}_s$ has a Gaussian distribution with
covariance operator $(t-s) Q$. The expansion
(\ref{Wiener-expansion-tau}) follows from (\ref{Wiener-expansion}).

Now fix $v \in \mathbb{R}$ and $B \in \mathcal{E}$ with $F(B) <
\infty$. The process
\begin{align*}
N_t := \frac{\exp(i v \mu([0,t] \times B))}{\mathbb{E}[\exp(i v
\mu([0,t] \times B))]}, \quad t \geq 0
\end{align*}
is a complex-valued martingale, because for all $s,t \in
\mathbb{R}_+$ with $s < t$ the random variable $\mu((s,t] \times B)$
and the $\sigma$-algebra $\mathcal{F}_s$ are independent. By
\cite[Thm. II.4.8]{Jacod-Shiryaev} the martingale $(N_t)_{t \geq 0}$
admits the representation
\begin{align*}
N_t = \exp \Big( i v \mu([0,t] \times B) - (e^{i v} - 1) F(B) t
\Big), \quad t \geq 0.
\end{align*}
According to the Optional Stopping Theorem, the process $(N_{t +
\tau})_{t \geq 0}$ is a nowhere vanishing complex
$(\tilde{\mathcal{F}}_t)$-martingale. Thus, for $s,t \in
\mathbb{R}_+$ with $s<t$ we obtain
\begin{align*}
\mathbb{E} \bigg[ \frac{N_{t + \tau}}{N_{s + \tau}} \, | \,
\tilde{\mathcal{F}}_s \bigg] = 1.
\end{align*}
For each $C \in \tilde{\mathcal{F}}_s$ we get
\begin{align*}
\mathbb{E} [ \mathbbm{1}_C \exp ( i v \tilde{\mu}((s,t] \times B)) ]
= \mathbb{P}(C) \exp \Big( (e^{i v} - 1) F(B) (t-s) \Big).
\end{align*}
Hence, the random variable $\tilde{\mu}((s,t] \times B)$ and the
$\sigma$-algebra $\tilde{\mathcal{F}}_s$ are independent, and
$\tilde{\mu}((s,t] \times B)$ has a Poisson distribution with mean
$(t-s) F(B)$.

Next, we claim that
\begin{align}\label{int-W-tau}
\int_{\tau}^{\tau + t} \Phi_s dW_s = \int_0^t \Phi_{\tau + s} d
\tilde{W}_s
\end{align}
for every predictable process $\Phi : \Omega \times \mathbb{R}_+
\rightarrow L_2^0(H)$ satisfying
\begin{align*}
\mathbb{P} \bigg( \int_0^t \| \Phi_s \|_{L_2^0(H)}^2 ds < \infty
\bigg) = 1
\end{align*}
for all $t \in \mathbb{R}_+$, and
\begin{align}\label{int-mu-tau}
\int_{\tau}^{\tau + t} \Psi(s,x) \mu(ds,dx) = \int_0^t \Psi(\tau +
s,x) \tilde{\mu}(ds,dx)
\end{align}
for every predictable process $\Psi : \Omega \times \mathbb{R}_+
\times E \rightarrow L_2^0(H)$ satisfying
\begin{align*}
\mathbb{P} \bigg( \int_0^t \int_E \| \Psi(s,x) \|^2 F(dx) ds <
\infty \bigg) = 1
\end{align*}
for all $t \in \mathbb{R}_+$. If $\Phi$, $\Psi$ are elementary and
$\tau$ a simple stopping time, then (\ref{int-W-tau}),
(\ref{int-mu-tau}) hold by inspection. The general case follows by
localization.

If $(r_t)_{t \geq 0}$ is a weak solution to (\ref{equation}), for
every $\zeta \in \mathcal{D}(A^*)$ relations (\ref{int-W-tau}),
(\ref{int-mu-tau}) yield
\begin{align*}
\langle \zeta, r_{\tau + t} \rangle &= \langle \zeta, r_{\tau}
\rangle + \int_{\tau}^{\tau + t} ( \langle A^* \zeta, r_s \rangle +
\langle \zeta, \alpha(r_s) \rangle ) ds + \sum_{j} \int_{\tau}^{\tau
+ t} \langle \zeta, \sigma(r_s) \rangle d\beta_s^j
\\ &\quad + \int_{\tau}^{\tau + t} \int_{E} \langle
\zeta, \gamma(r_{s-},x) \rangle (\mu(ds,dx) - F(dx)ds)
\\ &= \langle \zeta, r_{\tau}
\rangle + \int_{0}^{t} ( \langle A^* \zeta, r_{\tau + s} \rangle +
\langle \zeta, \alpha(r_{\tau + s}) \rangle ) ds + \sum_{j}
\int_{0}^{t} \langle \zeta, \sigma(r_{\tau + s}) \rangle
d\tilde{\beta}_s^j
\\ &\quad + \int_{0}^{t} \int_{E} \langle
\zeta, \gamma(r_{(\tau + s)-},x) \rangle (\tilde{\mu}(ds,dx) -
F(dx)ds).
\end{align*}
Hence, $(\tilde{r}_t)_{t \geq 0}$ is a weak solution for
(\ref{SPDE-tau}).
\end{proof}

\end{appendix}

\end{document}